\newcommand{\shortdot}[1]{\raisebox{-0.4pt}{$\stackrel{\bullet}{#1}$}}
\newcommand{\updot}[1]{\raisebox{0.9pt}{$\stackrel{\bullet}{#1}$}} 
\DeclareBoldMathCommand\boldlangle{\left\langle} 
\DeclareBoldMathCommand\boldrangle{\right\rangle}
\theoremstyle{plain}
\newtheorem{theorem}{Theorem}[section]
\newtheorem{lemma}[theorem]{Lemma}
\newtheorem{corollary}[theorem]{Corollary}
\newtheorem{proposition}[theorem]{Proposition}
\theoremstyle{definition}
\newtheorem{definition}{Definition}[section]
\newtheorem{assumption}[definition]{Assumption}
\theoremstyle{remark}
\newenvironment{customthm}[1]
  {\innercustomthm}
  {\endinnercustomthm}
\begin{document}

\title{A Stochastic Analysis of Bike Sharing Systems}
\author{ 
  Shuang Tao \\ School of Operations Research and Information Engineering \\ Cornell University
\\ 293 Rhodes Hall, Ithaca, NY 14853 \\  st754@cornell.edu  \\ 
\and
  Jamol Pender \\ School of Operations Research and Information Engineering \\ Cornell University
\\ 228 Rhodes Hall, Ithaca, NY 14853 \\  jjp274@cornell.edu  \\ 
 }

\maketitle
\begin{abstract}
 As more people move back into densely populated cities, bike sharing is emerging as an important mode of urban mobility.  In a typical bike sharing system, riders arrive at a station and take a bike if it is available. After retrieving a bike, they ride it for a while, then return it to a station near their final destinations.  Since space is limited in cities, each station has a finite capacity of docks, which cannot hold more bikes than its capacity.  In this paper, we study bike sharing systems with stations having a finite capacity.  By an appropriate scaling of our stochastic model, we prove a mean field limit and a central limit theorem for an empirical process of the number of stations with $k$ bikes.  The mean field limit and the central limit theorem provides insight on the mean, variance, and sample path dynamics of large scale bike sharing systems.  We also leverage our results to estimate confidence intervals for various performance measures such as the proportion of empty stations, the proportion of full stations, and the number of bikes in circulation.  These performance measures have the potential to inform the operations and design of future bike sharing systems.      

\end{abstract}


\section{Introduction}

Bike sharing  is an emerging mode of eco friendly transportation that have launched in over 400 cities around the world (\citet{nair2011fleet}).  In the United States, we are witnessing a transition where more people are deciding to live in large and densely populated cities.  As more people transition from the sprawling suburbs into densely populated cities, bike sharing programs will continue to grow in popularity since they provide easy transportation for citizens of these large cities.  As more people use these bike sharing systems(BSS), less people will drive motor vehicles on the road.  This reduction of vehicles on the road due to BSS has the potential to also reduce the growing traffic congestion in these growing cities.  BSS also promote healthy living as biking is a great form of exercise. They are environmentally friendly and they have the potential to reduce carbon emissions if operated correctly and efficiently, see for example \citet{hampshire2012analysis, nair2011fleet, nair2013large, nair2016equilibrium, o2015data, o2015smarter, jian2016simulation, freund2017minimizing} and their references within for more information on BSS.

For a typical system, riders simply arrive at a station and select a bike if there is one available for them to take.  If there is no bike available for the rider, the rider will leave the system.  Otherwise, the rider will take a bike and ride it for a while before returning it to another station near their final destination, if there is available space. If no space is available, the rider must find a nearby station to return the bike.  If there were an infinite supply of docks to store the bikes, then our model would be reduced to a network of infinite server queues, which is more tractable to analyze.  However, since the number of bike docks have finite capacity, the model become less tractable especially for systems with a large number of stations.  

Much of the complexity inherent in BSS lies in the scarcity of resources to move all riders around each city at all times of the day. Riders can encounter the scarcity of resources in two fundamental ways.  First, a rider can encounter insufficient resources by finding an empty station with no bikes when a rider needs one.  Secondly, a rider can find a station full with bikes when attempting to return a bike.  Thus, from a managerial point of view, having stations with no bikes or too many bikes are both problematic for riders.  There are several reasons why bike stations either have no or too many bikes.  One main reason why stations might have too many bikes or too few bikes is that the system is highly inhomogeneous.  Not only is the arrival rate a non-constant function of the time of day (see Figure~\ref{Fig_Avg_Trips}), but also riders do not evenly distribute themselves amongst the available stations.  For example, many stations that are located in residential areas have fewer bikes available for riding as many riders take bikes to more commercial areas.  Another example that illustrates the inhomogeneous dynamics is that riders tend to take bikes from up-hill stations to go to down-hill stations, however, very few riders take bikes from down-hill stations to go up-hill.  Thus, as more bikes flow from residential to commercial areas during rush hours, or up-hill to down-hill stations, this causes the system to be more imbalanced over time. 

Since BSS are quite complex, researchers have been inspired to study these systems in great depth.  The subsequent analysis of BSS has generated many insights on these systems, especially for rebalancing the fleet of bikes.  Although there is a large community that studies these systems, few analytical models have been proposed, especially stochastic analytical models.  This is primarily because the stochastic models for BSS are often very complex and are rather intractable to analyze without making strong assumptions.  Nevertheless, the analysis of such stochastic and mathematical models could provide insights on the behavior of these BSS and how to manage them effectively. In fact, a deeper analysis of  stochastic models for such systems could help researchers understand the impact of different incentive algorithms for taking or returning bikes, which can lead to significant improvements in the overall system performance.   

Most BSS can be viewed as closed queueing networks. One of the first papers to model the bike sharing system as a closed queueing network is by  \citet{george2011fleet}. However, in the bike sharing context, the customers are replaced by bikes. The number of bikes, also known as the fleet size, is fixed and remains constant.  In this model, the bikes can go to two types of stations.  The first type of station is a single-server queue where the service times are the user inter-arrival times to this station. The second type of station is an infinite-server queue where the service times are the trip times on a route from station $i$ to station $j$. The main drawback of the model by \citet{george2011fleet} is that it is based on infinite capacity queues.  This means that the model does not take into account the finite capacity of the stations and the related strategies of the users to return their bikes.  To overcome this major drawback, the model proposed in this paper allow the finite capacity at stations.  Thus, we are able to model the real system where customers are blocked from returning bikes to the stations that is nearest to their destination.  

In our model, we model the bike sharing system as state dependent $M/M/1/K_i$ queueing networks.  When joining a saturated single-server queue, the user reattempts in another queue, after a time with the same distribution as the trip time, until he returns his bike.  Although this model seems to model the behavior of the network, it is not practical since it scales with the number of stations.  Thus, we follow an approach developed by \citet{fricker2012mean} to study the bike sharing network's empirical process instead.  The empirical process still allows us to derive important performance measures of the original system, however, it scales with the maximum station capacity and not the number of stations, which is more practical for large networks like CitiBike.

\begin{figure}[htbp]
\centering
\includegraphics[width=1\textwidth]{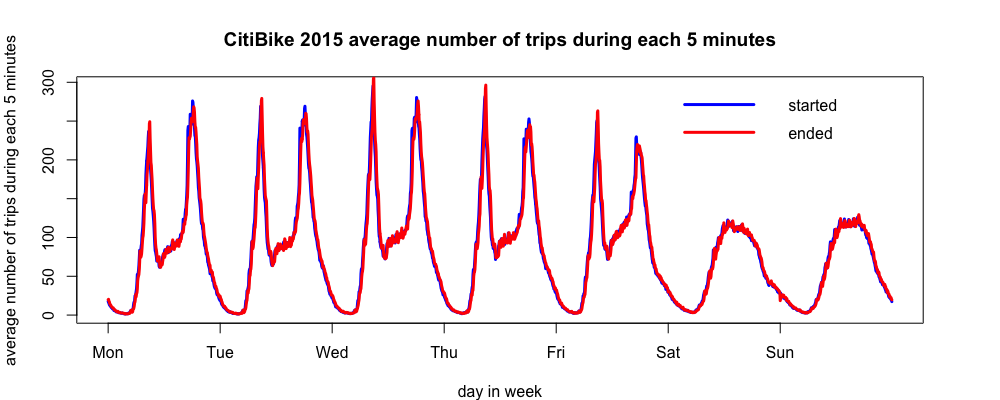}
\captionsetup{justification=centering}
\caption{CitiBike average number of trips during each 5 minutes (Jan 1st-Dec 31st, 2015).\\ The red line represents the average number of trips that started during each 5 minutes. The blue line represents the average number of trips that ended during each 5 minutes.}
\label{Fig_Avg_Trips}
\end{figure}

 \subsection{Main Contributions of Paper}

The contributions of this work can be summarized as follows:    
\begin{itemize}
\item We construct a stochastic bike sharing queueing model that incorporates the finite capacity of stations. Since our model is difficult to analyze for a large number of stations, we propose to analyze an empirical process that describes the proportion of stations that have a certain number of bikes. 
\item We prove a mean field limit and a central limit theorem for our stochastic bike sharing empirical process, showing that the mean field limit and the variance of the empirical process can be described by a system of $\frac{1}{2}(K+4)(K+1)$ differential equations where $K$ is the maximum station capacity.    
\item Using the mean field limit of the empirical process, we are able to approximate the mean proportion of empty and full stations.  Furthermore, with the central limit theorem of the empirical process, we are able to construct confidence intervals around the mean field limit for the same performance measures.
\item We compare the mean field limit and the central limit theorem to a simulation and show that the differential equations approximate the mean and variance of the empirical process extremely well. 
\end{itemize} 


\subsection{Organization of Paper}

The remainder of this paper is organized as follows. Section~\ref{history} provides a brief history of bike sharing programs and a literature review on the research streams concerning BSS. Section~\ref{Sec_Bike_Model} introduces our bike sharing model and notation of the paper.  In Section~\ref{Fluid_Limit}, we derive amd prove the mean field limit of the empirical measure process of the distribution of stations with different bikes and utilization rate. In Section~\ref{Diffusion_Limit}, we derive the diffusion limit and prove a functional CLT for our model. We show that the diffusion process is a centered Gaussian OU process and we also obtain a closed form expression of the diffusion limit process. In Section~\ref{Ext}, we extend our analysis in Section~\ref{Sec_Bike_Model} to a broader case with non-uniform routing probabilities and capacities, and derive the mean field limit and diffusion limit in this extended case. In Section~\ref{simulation}, we discuss the simulation results of our model, with both stationary and non-stationary arrival processes. We also give a comparison between real CitiBike empirical measure and simulated ones using our model to show how well our model is in capturing reality.  Finally, in Section~\ref{conc}, we give concluding remarks and provide some future directions of research that we intend to pursue later.

\section{History and Literature Review}\label{history}

The literature that focuses on the analysis and operations of BSS is increasing rapidly. Early research that studied the history of BSS includes \citet{shaheen2010bikesharing}, \citet{hampshire2012analysis}, \citet{nair2013large}, \citet{schuijbroek2017inventory}, and \citet{demaio2009bike}.  These papers provide a history of bike sharing and how it has evolved over time.  The beginning of bike sharing can be traced back to the first generation of \emph{white bikes} (or free bikes) in Amsterdam, The Netherlands as early as 1965. However, this first generation of BSS failed due to a large amount of bike theft.  The launch of Bycyklen in Copenhagen in 1995 marked the second generation of BSS.  Bycyklen was the first BSS to implement docking stations and coin-deposit systems to unlock bikes.  These coin-deposit systems helped with the bike theft problem and thus made the BSS more reliable. However, even with these improvements, the Bycyklen could not eliminate bike thefts mainly due to the fact that customer still remained anonymous and there were no time limits on how long a customer could use a bike.  

The failures of the second generation of BSS inspired the present-day or the third generation of BSS by combining docks with information technology. These new systems incorporate information technology for bicycle reservations, pickups, and drop-offs.  This new technology has enabled many BSS to keep better track of bicycles and the users that use them, thus eliminating virtually all bike theft.  One example of a third generation BSS is the Paris’ bike sharing program Velib.  Velib was launched in Paris in July 2007 and has emerged as the most prominent example of a successful bike sharing program in the modern world. As a result of the success of Velib,  many cities like New York City (Citi Bike), Chicago (Divvy), and even Ithaca (Big Red Bikes) have implemented large-scale BSS and bike sharing has become a widely used form of transportation in these cities.  For the interested reader,  \citet{laporte2015shared} provides a comprehensive survey of the vehicle/bike sharing literature.

Rebalancing is currently the biggest stream of research concerning BSS.  In rebalancing operations, there are two methods of rebalancing: (1) deploying a truck fleet or (2) providing user incentives, where deploying a truck fleet is often referred to as bike repositioning. Both methods involve static and dynamic cases.  Static repositioning usually is moving bikes during the night when traffic flow is low, while dynamic repositioning is moving bikes during the day based on current state of the system. Most research on this area focuses on the static case, partly because it is easier to model and also because the impact of repositioning is more important during the night (\citet{jian2016simulation}). \citet{raviv2013static} is one of the first papers to study static repositioning of BSS, using mixed integer linear programming by maximizing customer demand satisfaction. \citet{benchimol2011balancing} consider a similar problem, where a single truck repositions bikes to bring the inventory of each station to a predetermined value.  However, their objective is to minimize the routing cost as opposed to maximizing customer satisfaction. In the case of dynamic repositioning, \citet{chemla2013self} and \citet{pfrommer2014dynamic} consider the case when the trucks respond in real time to the current state of the system.  However, \citet{contardo2012balancing} and \citet{ghosh2017dynamic} consider the situation where the time-dependent demand is known a priori and the rebalancing operations are computed in an off-line fashion. Yet, none of these papers really explore stochastic dynamics and they for the most part mainly exploit optimization techniques to tackle the problem.

Unlike the rebalancing literature, our paper falls into the performance analysis literature with an emphasis in supply analysis. We focus on analyzing the most salient performance measures such as the mean, variance, covariance and sample path dynamics of the bike distributions in a large-scale BSS.  There is not much literature that explores the fluctuations of BSS around the mean field limit.  In this paper, we prove a mean field limit and a functional central limit theorem under some smoothness conditions and show that the diffusion limit is characterized by a multi-dimensional Ornstein-Uhlenbeck (OU) process.  The functional central limit theorem not only gives us information about the sample path fluctuations of the queue length process, but it also allows us to construct approximate confidence intervals for various performance measures such as the proportion of empty stations, the proportion of full stations, and the mean number of bikes in circulation. Unlike the previous literature, our paper also considers non-stationary arrivals to stations, which is much more realistic given the user patterns we observe in the historical data from Citi Bike. In Figure~\ref{Fig_Avg_Trips}, we plot the empirical mean of the number of trips (5 minute intervals) during the week for the time period Jan 1st-Dec 31st, 2015. We observe from Figure~\ref{Fig_Avg_Trips} that the arrival rate is non-stationary and clearly reflects the morning rush and evening rush during the peak times. Thus, analyzing the non-stationary dynamics is crucial for understanding the impact of system inhomogeneity since it can provide useful guidelines for rebalancing operations.


\section{Bike-Sharing Queueing Model} \label{Sec_Bike_Model}

\begin{figure}[htbp]
\centering
\includegraphics[scale=0.27]{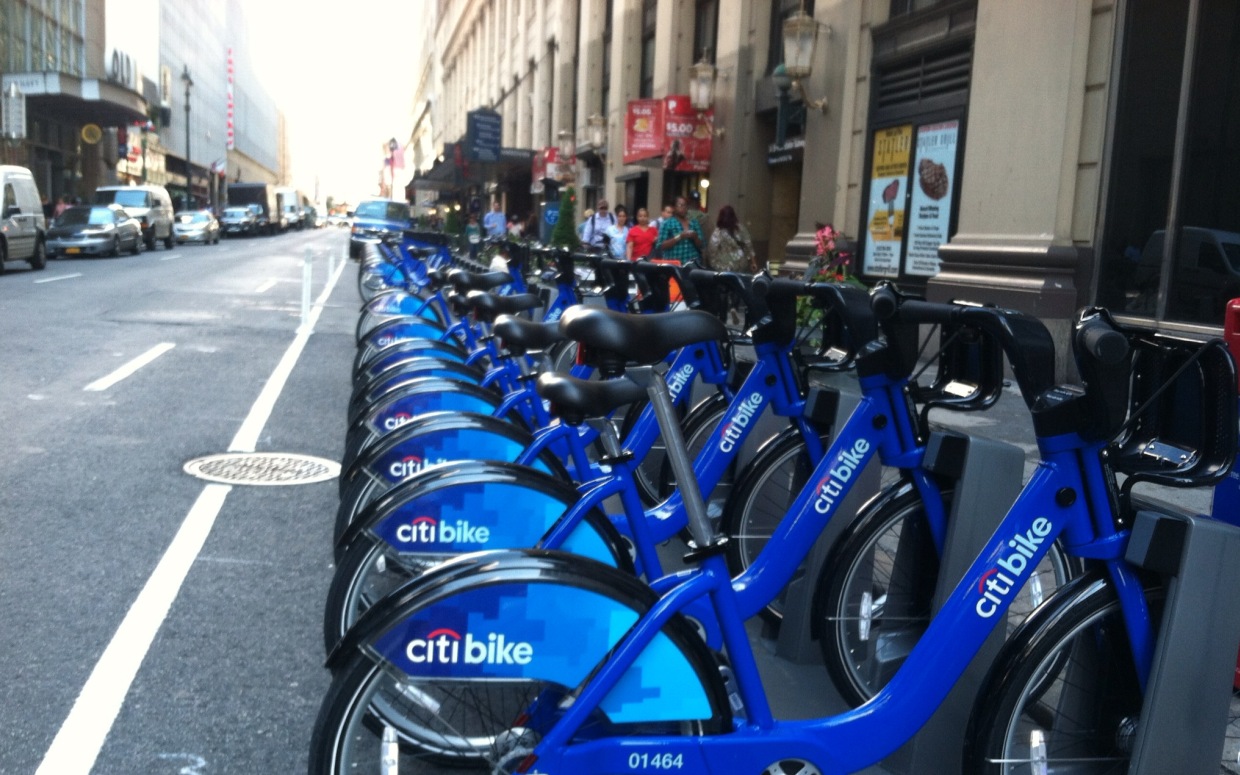}
\caption{Figure of a typical CitiBike Station.} \label{Fig_Shuang_station}
\end{figure}

In this section, we construct a Markovian bike sharing queueing model where customers can pick-up and drop-off bikes at each station if there is available capacity.  Figure \ref{Fig_Shuang_station} provides an illustration of a typical Citi Bike station in New York City (NYC).  As one can see in Figure \ref{Fig_Shuang_station}, the bikes are attached to docks and the number of docks is finite with roughly 40 docks.  Figure \ref{Fig_station_map} shows a map of Citi Bike stations, the nation's largest bike sharing program, with over 10,000 bikes and 600 stations across Manhattan, Brooklyn, Queens and Jersey City. Citi Bike was designed for quick, affordable and convenient trips, and has become an essential part of the transportation infrastructure in NYC.

\begin{figure}[htbp]
\centering
\includegraphics[scale=0.21]{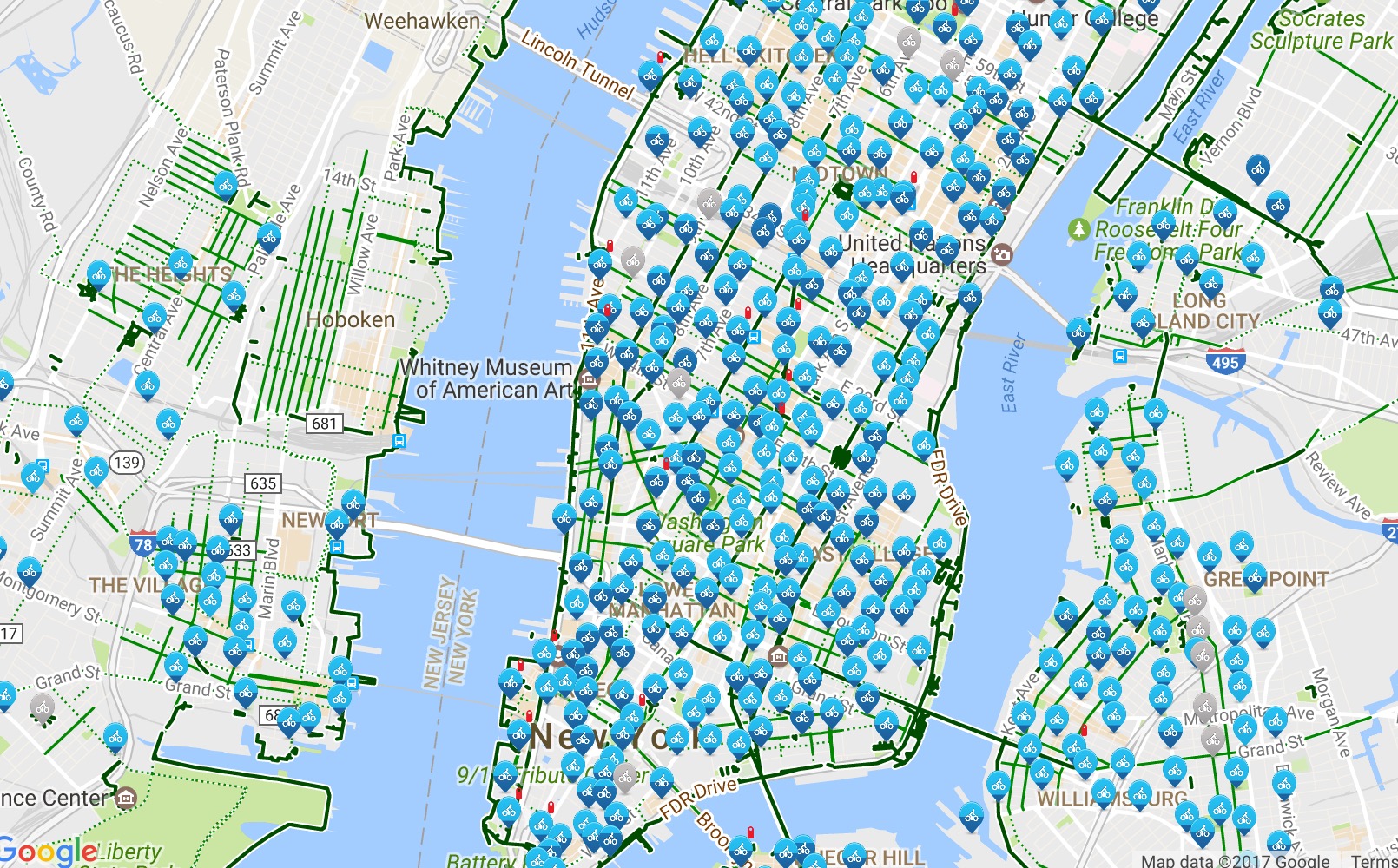}
\caption{Citi Bike stations map} \label{Fig_station_map}
\end{figure}
 
Motivated by the Citi Bike bike sharing system, we consider a bikes sharing system with $N$ stations and a fleet of $M$ bikes in total. We assume that the arrival of customers to the stations are independent Poisson processes with rate $\lambda_i$ for station $i$.  When a customer arrives at a station, if there is no bikes available, they will then leave the system and are immediately blocked and lost. Otherwise, the customer will take a bike and ride to station $j$ with probability $P_j$.  We assume that the travel time of the rider is exponentially distributed with mean $1/\mu$, for every transition from one station to another.  Since we are concerned with finite capacity stations, we assume that station $i$ has a bike capacity of $K_i$, which is assumed to be finite for all stations.   Thus, when a customer arrives at station $j$, if there are less than $K_j$ bikes in this station, he returns his bike and leaves the system. If there are exactly $K_j$ bikes (i.e. the station is full), the customer randomly chooses another station $k$ with probability $P_k$ and goes to that station to drop the bike off.  As before, it takes a time that is exponentially distributed with mean $1/\mu$. Finally, the customer rides like this again until he can return his bike to a station that is not full. 

Below, we provide Table \ref{notation} for the reader's convenience so that they understand the notation that we will use throughout the paper.  

\begin{table}[ht]
\caption{Notation}\label{notation} 
\centering 
\begin{tabular}{c c } 
\hline\hline 

\hline 
$N$ & Number of stations \\ 
$M$ & Total number of bikes \\
$K_{i}$ & Capacity at station $i$\\
$\lambda_i$ & Arrival rate at station $i$\\
$1/\mu$ & Mean travel time\\
$P_i$ & Routing probability to station $i$\\
$X_i(t)$ & Number of bikes at station $i$ at time $t$\\
$R_i=\mu P_i /\lambda_i$ & Utilization at station $i$  \\
$r_i=R_i/\max_{i}R_i$ & Relative utilization at station $i$\\
$\gamma$ & Average number of bikes at each station\\
\hline 
\end{tabular}
\label{table:nonlin} 
\end{table}

To avoid cumbersome notation, throughout Sections \ref{Sec_Bike_Model}, \ref{Fluid_Limit} and \ref{Diffusion_Limit}, we \textbf{assume without loss of generality} that the service rate $\mu$ is equal to 1 and that the routing probability from each station is uniform, i.e. $P_i=1/N$. We also assume that capacities across all stations are equal, i.e. $K_i=K$, for  $i=1,...,N$. With our notation in hand, we are ready to develop our stochastic model for our bike sharing network.  At first glance, these assumptions seem restrictive, however, we explain in Section \ref{Ext-routing}  how our model can be extended seamlessly to more complex settings with non-uniform routing probabilities and capacities.  We should mention, however, the extension to inter-station transition probabilities i.e probabilities that depends on the departing station as well as the returning station are non-Markovian models.  In Section \ref{Ext-repositioning} we discuss more in detail how one can extend the state space, by tracking the number of in-transit bikes coming from each station, to make the queueing model Markovian, however this extended model makes the problem even more high-dimensional and adds difficulty to the analysis.

We define $\mathbf{X}(t)=(X_{1}(t),\cdots, X_{N}(t))$, where $X_{i}(t)$ is the number of bikes at station $i$ at time $t$. Then $X_{i}(t)$ is a continuous time Markov chain(CTMC), in particular a state dependent $M/M/1/K_{i}$ queue. In this model, the rate of dropping off bikes at station $i$ is equal to $\mu P_{i}\left(M-\sum_{k=1}^{N}X_{k}(t)\right)\mathbf{1}\{X_{i}(t)<K_{i}\}$, and the rate of retrieving bikes at station $i$ is equal to $\lambda_{i}\mathbf{1}\{X_{i}(t)>0\}$.  Using these rates, we can construct the functional forward equations for our stochastic bike sharing model.  This construction is given below in the following proposition:

 \begin{proposition}\label{functional_forward}
 For any integrable  function $f: \mathbb{Z}_{+}^{N}\rightarrow \mathbb{R}$, the CTMC $\mathbf{X}(t)$ satisfy the following functional forward equation,
 \begin{eqnarray}
 & &\lefteqn{ \updot{\mathbb{E}}[f(\mathbf{X}(t)) | \mathbf{X}(0) = \mathbf{x}]  \equiv  \frac{d}{dt} \mathbb{E}[f(\mathbf{X}(t)) | \mathbf{X}(0) = \mathbf{x}]  } \nonumber \\ 
 &=&  \sum_{i=1}^{N} \mathbb{E}\left[\left(f(\mathbf{X}(t)-\mathbf{1}_{i})-f(\mathbf{X}(t)\right)\lambda_{i}\mathbf{1}_{\{X_{i}(t)>0\}}\right]\nonumber  \\
& & +\sum_{i=1}^{N} \mathbb{E}\left[\left(f(\mathbf{X}(t)+\mathbf{1}_{i})-f(\mathbf{X}(t)\right)\mu P_{i}\left(M-\sum_{k=1}^{N}X_{k}(t)\right)\mathbf{1}_{\{X_{i}(t)<K_{i}\}} \right] .
 \end{eqnarray} 
 \end{proposition}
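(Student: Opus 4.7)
The plan is to recognize this statement as Dynkin's formula (equivalently, the Kolmogorov forward equation applied to a test function) for the CTMC $\mathbf{X}(t)$, and to verify that the infinitesimal generator of $\mathbf{X}(t)$ is exactly the sum of expectations appearing on the right-hand side. First I would catalog the transitions of $\mathbf{X}(t)$: from state $\mathbf{x}=(x_1,\dots,x_N)$ the chain jumps to $\mathbf{x}-\mathbf{1}_i$ at rate $\lambda_i \mathbf{1}\{x_i>0\}$ (a bike retrieval at station $i$, which only occurs if the station is non-empty) and to $\mathbf{x}+\mathbf{1}_i$ at rate $\mu P_i (M-\sum_k x_k)\mathbf{1}\{x_i<K_i\}$ (a bike drop-off at station $i$, produced by one of the $M-\sum_k x_k$ bikes currently in transit, provided the station is not full). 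These are exactly the rates stated in the paragraph preceding the proposition.

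From these transitions I would read off the infinitesimal generator $\mathcal{L}$ acting on $f:\mathbb{Z}_{+}^{N}\to\mathbb{R}$ as
\begin{equation*}
\mathcal{L}f(\mathbf{x}) = \sum_{i=1}^{N} \lambda_i \mathbf{1}\{x_i>0\}\bigl(f(\mathbf{x}-\mathbf{1}_i)-f(\mathbf{x})\bigr) + \sum_{i=1}^{N} \mu P_i\Bigl(M-\sum_{k=1}^{N}x_k\Bigr)\mathbf{1}\{x_i<K_i\}\bigl(f(\mathbf{x}+\mathbf{1}_i)-f(\mathbf{x})\bigr).
\end{equation*}
The standard forward equation for a CTMC then asserts
\begin{equation*}
\frac{d}{dt}\mathbb{E}[f(\mathbf{X}(t))\mid \mathbf{X}(0)=\mathbf{x}] = \mathbb{E}[\mathcal{L}f(\mathbf{X}(t))\mid \mathbf{X}(0)=\mathbf{x}],
\end{equation*}
which, after substituting the formula for $\mathcal{L}f$, is precisely the identity in the proposition. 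I would justify this step in one of two standard ways: either by writing $\mathbb{E}[f(\mathbf{X}(t+h))-f(\mathbf{X}(t))\mid \mathbf{X}(t)=\mathbf{y}] = h\,\mathcal{L}f(\mathbf{y}) + o(h)$ from the definition of the jump rates and dividing by $h$, or by invoking Dynkin's formula $\mathbb{E}[f(\mathbf{X}(t))]=f(\mathbf{x})+\int_0^t \mathbb{E}[\mathcal{L}f(\mathbf{X}(s))]\,ds$ and differentiating in $t$.

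The ingredient that normally requires care, namely integrability and the interchange of limit/expectation in Dynkin's formula, is essentially trivial here: because each $X_i(t)\in\{0,1,\dots,K_i\}$, the total state space $\prod_{i=1}^{N}\{0,1,\dots,K_i\}$ is finite, so $\mathbf{X}(t)$ is a finite-state uniformizable CTMC; the rates $\lambda_i$ and $\mu P_i(M-\sum_k x_k)\mathbf{1}\{x_i<K_i\}$ are uniformly bounded, and the stated integrability of $f$ reduces to mere boundedness on this finite state space. Consequently the step-function $t\mapsto \mathbb{E}[f(\mathbf{X}(t))]$ is continuously differentiable and the forward equation holds pointwise. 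The main ``obstacle'' is therefore bookkeeping rather than analysis: one needs to be careful that the indicator functions $\mathbf{1}\{X_i(t)>0\}$ and $\mathbf{1}\{X_i(t)<K_i\}$ are preserved inside the expectation on the right-hand side, since the transition rates are genuinely state-dependent, and that the drop-off rate at station $i$ uses the current in-transit count $M-\sum_k X_k(t)$ rather than a constant. Once these terms are tracked carefully, the proof amounts to the standard derivation of the Kolmogorov forward equation.
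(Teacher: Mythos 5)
Your proposal is correct and follows essentially the same route as the paper: the paper also derives the identity by the discretization argument $\mathbb{E}[f(\mathbf{X}(t+\Delta))] - \mathbb{E}[f(\mathbf{X}(t))] = \Delta\,\mathbb{E}[\mathcal{L}f(\mathbf{X}(t))] + o(\Delta)$, dividing by $\Delta$ and letting $\Delta\to 0$, with the same transition rates you catalog. Your additional remark that the finite state space makes the integrability and interchange issues trivial is a sensible observation that the paper leaves implicit.
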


\begin{proof}
The proof is found in the Appendix.  
\end{proof}
 
\begin{corollary}\label{funcforeqns}
The time derivatives of the mean, variance, and covariance of $\mathbf{X}(t)$ are given by
\begin{eqnarray}
 \updot{\mathbb{E}}[X_i(t)]   &=& \mathbb{E}\left[ \mu P_{i}\left(M-\sum_{k=1}^{N}X_{k}(t)\right)\mathbf{1}\{X_{i}(t)<K_{i}\}\right]  - \lambda_{i} \mathbb{P}\left[ X_{i}(t) > 0   \right] ,
 \\
\updot{\mathrm{Var}}[X_i(t)]  &=& \updot{\mathbb{E}}[X_i^{2}(t)]-2\updot{\mathbb{E}}[X_i(t)]\mathbb{E}[X_i(t)]\nonumber \\
 &=& 2 \mathrm{Cov}\left[X_i(t),\mu P_{i}\left(M-\sum_{k=1}^{N}X_{k}(t)\right)\mathbf{1}\{X_{i}(t)<K_{i}\} \right]  \nonumber \\
 &-& 2 \mathrm{Cov}\left[X_i(t),  \lambda_{i}\mathbf{1}\{ X_{i}(t) > 0 \}  \right]\nonumber \\
  & +& \mathbb{E}\left[ \mu P_{i}\left(M-\sum_{k=1}^{N}X_{k}(t)\right)\mathbf{1}\{X_{i}(t)<K_{i}\} \right] +  \lambda_{i} \mathbb{P}\left[ X_{i}(t) > 0   \right]  ,
\end{eqnarray}

\begin{eqnarray}
 \updot{\mathrm{Cov}}[X_i(t), X_j(t)] 
&=&\mathrm{Cov}\left[\mu P_{j}\left(M-\sum_{k=1}^{N}X_{k}(t)\right)\mathbf{1}_{\{X_{j}(t)<K_{j}\} }-\lambda_j\mathbf{1}_{\{X_{j}(t)>0\} },X_{i}(t) \right]\nonumber \\
&+ &  \mathrm{Cov}\left[\mu P_{i}\left(M-\sum_{k=1}^{N}X_{k}(t)\right)\mathbf{1}_{\{X_{i}(t)<K_{i}\} }-\lambda_i\mathbf{1}_{\{X_{i}(t)>0\} },X_{j}(t) \right],\nonumber\\
\end{eqnarray}
for $i,j=1,\cdots,N$ and $i\neq j$.
\end{corollary}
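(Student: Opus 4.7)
The plan is to apply Proposition \ref{functional_forward} three times with the test functions $f(\mathbf{x}) = x_i$, $f(\mathbf{x}) = x_i^2$, and $f(\mathbf{x}) = x_i x_j$, then assemble the variance and covariance statements via the product rule. Throughout, it will be convenient to abbreviate the drop-off rate into station $i$ and pick-up rate out of station $i$ as
\begin{equation*}
A_i(t) \;\equiv\; \mu P_i\Bigl(M-\textstyle\sum_{k=1}^{N} X_k(t)\Bigr)\mathbf{1}\{X_i(t)<K_i\}, \qquad D_i(t) \;\equiv\; \lambda_i\mathbf{1}\{X_i(t)>0\},
\end{equation*}
so that Proposition \ref{functional_forward} takes the compact form $\updot{\mathbb{E}}[f(\mathbf{X}(t))] = \sum_i \mathbb{E}[(f(\mathbf{X}-\mathbf{1}_i)-f(\mathbf{X}))D_i + (f(\mathbf{X}+\mathbf{1}_i)-f(\mathbf{X}))A_i]$.

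First, for the mean, I take $f(\mathbf{x})=x_i$. Then $f(\mathbf{x}\pm\mathbf{1}_k)-f(\mathbf{x}) = \pm\delta_{ik}$, so only the $k=i$ summand survives and we immediately get $\updot{\mathbb{E}}[X_i(t)] = \mathbb{E}[A_i(t)] - \mathbb{E}[D_i(t)]$, which is exactly the first line of the corollary once one notes $\mathbb{E}[D_i(t)] = \lambda_i \mathbb{P}[X_i(t)>0]$.

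Next, for the variance, I take $f(\mathbf{x})=x_i^2$ and use $(x_i\pm 1)^2-x_i^2 = \pm 2x_i+1$, which again leaves only the $k=i$ term in each sum. This yields
\begin{equation*}
\updot{\mathbb{E}}[X_i^2(t)] \;=\; 2\,\mathbb{E}[X_i(t)\,A_i(t)] + \mathbb{E}[A_i(t)] \;-\; 2\,\mathbb{E}[X_i(t)\,D_i(t)] + \mathbb{E}[D_i(t)].
\end{equation*}
Subtracting $2\mathbb{E}[X_i(t)]\updot{\mathbb{E}}[X_i(t)] = 2\mathbb{E}[X_i]\mathbb{E}[A_i] - 2\mathbb{E}[X_i]\mathbb{E}[D_i]$ and recognizing the covariance terms $\mathrm{Cov}[X_i,A_i]$ and $\mathrm{Cov}[X_i,D_i]$ gives the second statement of the corollary.

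Finally, for the cross-covariance with $i \neq j$, I take $f(\mathbf{x})=x_i x_j$. The key observation is that $(x_i+\delta_{ik})(x_j+\delta_{jk})-x_ix_j = x_j\delta_{ik}+x_i\delta_{jk}+\delta_{ik}\delta_{jk}$, and because $i\neq j$ the product $\delta_{ik}\delta_{jk}$ vanishes for every $k$. Consequently only two terms in the $k$-sum contribute (namely $k=i$ and $k=j$) and we obtain
\begin{equation*}
\updot{\mathbb{E}}[X_i(t)X_j(t)] \;=\; \mathbb{E}[X_j(t)(A_i(t)-D_i(t))] + \mathbb{E}[X_i(t)(A_j(t)-D_j(t))].
\end{equation*}
Subtracting $\mathbb{E}[X_j]\updot{\mathbb{E}}[X_i] + \mathbb{E}[X_i]\updot{\mathbb{E}}[X_j]$ converts each expectation into the corresponding covariance and produces the third identity. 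The main — if modest — obstacle is the bookkeeping in the variance case: one has to be careful that the $+1$ coming from the square expansion produces exactly $\mathbb{E}[A_i]+\mathbb{E}[D_i]$ (the diagonal quadratic-variation contribution) while the $\pm 2x_i$ terms are what become the two covariance terms after subtracting $2\mathbb{E}[X_i]\updot{\mathbb{E}}[X_i]$. No heavy machinery is needed beyond Proposition \ref{functional_forward} and the definitions of variance and covariance.
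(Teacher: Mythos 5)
Your proposal is correct and follows exactly the paper's own route: apply Proposition \ref{functional_forward} with $f(\mathbf{x})=x_i$, $x_i^2$, and $x_ix_j$, then subtract the product-rule terms to convert to variances and covariances. The only difference is that you carry out the difference-quotient bookkeeping (e.g.\ $(x_i\pm1)^2-x_i^2=\pm2x_i+1$) explicitly, which the paper leaves implicit.
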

%
  
   \subsection{Intractability of Individual Stations Model}\label{intractability}
   
Although the functional forward equations given in Corollary \ref{funcforeqns} describe the exact dynamics of the mean, variance and covariance of the bike sharing system, the system of differential equations are not closed.  This non-closure property of the functional forward equations in this model arises from the fact that the bike sharing system has finite capacity.  More importantly, it also implies that we need to know a priori the full distribution of the whole stochastic process $\mathbf{X}(t)$ in order to calculate the mean or variance or any moment for that matter. Work by \citet{ Massey2013, Pender2014, Pender2015, EP, pender2016sampling} could yield useful and accurate closure approximations for making the system closed.  Moreover, withe exception of \citet{EP}, there  are no error bounds on the accuracy of various closure approximations.  Thus, it is not clear how well the closure approximations would perform over a variety of parameter settings.  Finally if we even wanted to solve these equations and knew the entire distribution of $\mathbf{X}(t)$ a priori, there are still $O(N^2)$ differential equations(around 180,900 equations in the CitiBike case) that would need to be numerically integrated.  This is very computationally expensive and thus, we must take a different approach to analyze our bike sharing system. 
 
Moreover, if we want to analyze the limiting behavior of $\{X_{i}\}_{i=1}^{N}$ as  CTMCs, as we let $N$ go to $\infty$, the mean field limit would become infinite dimensional, which is quite complicated.  However, if we instead analyze an empirical measure process for $\mathbf{X}(t)$, we can use the finite capacity nature of the bike sharing system to our advantage and have a finite dimensional CTMC for the empircal measure process.  

   \subsection{An Empirical Measure Model}
 
Following the model of \citet{fricker2012mean}, we construct an empirical measure process that counts the proportions of stations with $n$ bikes and utilization $r$.  This empirical measure process is given below by the following equation: 
\begin{equation}
Y_{t}^{N}(r,n)=\frac{1}{N}\sum_{i=1}^{N}\mathbf{1} \{r_i^{N}=r,X_i^N(t)=n\}.
\end{equation}

We further define that $Y_{t}^{N}(n)=\sum_{r}Y_{t}^{N}(r,n)$. By observing the empirical measure process, we notice that $Y_{t}^{N}=(Y_{t}^{N}(0),\cdots,Y_{t}^{N}(K)) \in [0,1]^{K+1}$.  Thus, for our empirical measure process, we only need to solve $O(K^2)$ differential equations for understanding the mean and variance dynamics of the bike sharing system, where $K<<N$. More importantly, the empirical measure will also allow us to obtain salient performance measures such as $Y_{t}^{N}(0)$ (the proportion of stations with no bikes), $Y_{t}^{N}(K)$ (the proportion of stations that are full of bikes), $M - \sum^{K}_{j=0} j \cdot Y_{t}^{N}(j)N$ (the number of bikes in circulation), among others.

Conditioning on $Y_{t}^{N}(r,n)=y(r,n)$ and given our assumptions that $\mu=1$ and $P_{i}=1/N$, the transition rates of $y$ are specified as follows:\\
When a customer arrives to a station with $n$ bikes and relative utilization $r$ to retrieve a bike, the
proportion of stations having $n$ bikes goes down by $1/N$, the proportion of stations having $n-1$ bikes goes up by $1/N$, and the transition rate $Q^N$ is 
\begin{eqnarray}
Q^{N}\left(y,y+\frac{1}{N}(\mathbf{1}_{(r,n-1)}-\mathbf{1}_{(r,n)}) \right) &=& 
y(r,n)\lambda_r N \mathbf{1}_{n>0}\nonumber \\ 
&=& y(r,n)\frac{\mu P_i}{R}N \mathbf{1}_{n>0}\nonumber \\
&=& y(r,n)\frac{1}{NR} N\mathbf{1}_{n>0} \nonumber \\ &=&
\frac{y(r,n)}{rR^N_{\max}} \mathbf{1}_{n>0}.
\end{eqnarray}
When a customer returns a bike to a station with $n$ bikes and relative utilization $r$, the proportion of stations having $n$ bikes goes down by $1/N$, the proportion of stations having $n+1$ bikes goes up by $1/N$, and the transition rate $Q^N$ is 
\begin{eqnarray}
Q^{N} \left(y,y+\frac{1}{N}(\mathbf{1}_{(r,n+1)}-\mathbf{1}_{(r,n)}) \right) &=&
 y(r,n) \cdot \mu \cdot \left(M-\sum_{n'}\sum_{r'}n'y(r',n')N\right)\mathbf{1}_{n<K}\nonumber  \\  &=&
 y(r,n)N\left(\frac{M}{N}-\sum_{n'}\sum_{r'}n'y(r',n')\right)\mathbf{1}_{n<K}.
\end{eqnarray}
Similarly, we have the functional forward equations for $Y_{t}^{N}(r,n)$.

\begin{proposition}\label{functional_forward_Y}
 For any integrable function $f: [0,1]^{K+1}\rightarrow \mathbb{R}$, $Y_{t}^N(r)=(Y_{t}^{N}(r,0),\cdots,Y_{t}^{N}(r,K))$ satisfies the following functional forward equation,
 \begin{eqnarray}
 & &\updot{\mathbb{E}}(f(Y_{t}^{N}(r)) | Y_{0}^{N}(r) = y_{0}(r)]\nonumber \\
 &=&  \sum_{n=0}^{K}\mathbb{E}\left[\left(f\left(Y_t^N(r)+\frac{1}{N}(\mathbf{1}_{r,n-1}-\mathbf{1}_{r,n})\right)-f(Y_t^N(r))\right)\frac{Y^{N}_{t}(r,n)}{rR_{\max}}\mathbf{1}_{n>0}\right]\nonumber \\
 &+&\sum_{n=0}^{K}\mathbb{E}\left[\left(f\left(Y_t^N(r)+\frac{1}{N}(\mathbf{1}_{r,n+1}-\mathbf{1}_{r,n})\right)-f(Y_t^N(r))\right)Y^{N}_{t}(r,n)N\left(\frac{M}{N}\right.\right.\nonumber\\
 &-&\left.\left.\sum_{n'}\sum_{r'}n'Y_{t}^{N}(r',n')\right)\mathbf{1}_{n<K}\right]
 \end{eqnarray}
 \end{proposition}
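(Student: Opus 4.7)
The plan is to mirror the discretization argument used in the proof of Proposition~\ref{functional_forward}, since the transition rates of the empirical measure process have already been enumerated in the two displays immediately preceding the statement. The empirical measure $Y_t^N$ is a finite-state CTMC on the simplex, with only two types of jumps out of a given state $y$: a pickup at a station of utilization $r$ holding $n$ bikes (shifting $y$ by $\frac{1}{N}(\mathbf{1}_{r,n-1}-\mathbf{1}_{r,n})$ at rate $y(r,n)/(rR_{\max}^N)\cdot\mathbf{1}_{n>0}$), and a drop-off (shifting $y$ by $\frac{1}{N}(\mathbf{1}_{r,n+1}-\mathbf{1}_{r,n})$ at rate $y(r,n)N(M/N-\sum_{n',r'}n'y(r',n'))\cdot\mathbf{1}_{n<K}$). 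All events involving distinct utilization classes $r'\neq r$ leave the sub-vector $Y_t^N(r)$ unchanged, so they do not appear in the generator acting on $f(Y_t^N(r))$.

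First, I would write
\begin{eqnarray*}
\mathbb{E}\bigl[f(Y_{t+\Delta}^N(r))\mid Y_0^N(r)=y_0(r)\bigr]
&=& f(Y_t^N(r))\bigl(1-\Lambda_t^N\Delta\bigr)\\
& & +\;\sum_{n=0}^{K}f\bigl(Y_t^N(r)+\tfrac{1}{N}(\mathbf{1}_{r,n-1}-\mathbf{1}_{r,n})\bigr)\tfrac{Y_t^N(r,n)}{rR_{\max}^N}\mathbf{1}_{n>0}\Delta\\
& & +\;\sum_{n=0}^{K}f\bigl(Y_t^N(r)+\tfrac{1}{N}(\mathbf{1}_{r,n+1}-\mathbf{1}_{r,n})\bigr)\\
& & \qquad \times\;Y_t^N(r,n)N\!\left(\tfrac{M}{N}-\sum_{n'}\sum_{r'}n'Y_t^N(r',n')\right)\!\mathbf{1}_{n<K}\Delta\\
& & +\;o(\Delta),
\end{eqnarray*}
where $\Lambda_t^N$ is the total outflow rate from the current state (sum of the two rates above over $n$). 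The $f(Y_t^N(r))\cdot 1$ term cancels when I subtract $f(Y_t^N(r))$ from both sides, and the $-f(Y_t^N(r))\Lambda_t^N\Delta$ piece combines with the jump terms to reproduce the $(f(\,\cdot\,+\text{shift})-f(\,\cdot\,))$ increments appearing in the claim.

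Next, I divide by $\Delta$, take expectations on both sides, and send $\Delta\to 0$. Because the state space of $Y^N_t$ is finite and the rates are bounded on it, the $o(\Delta)/\Delta$ term vanishes uniformly, and the limit on the left is $\updot{\mathbb{E}}[f(Y_t^N(r))\mid Y_0^N(r)=y_0(r)]$ by the same argument as in Proposition~\ref{functional_forward}. Collecting the surviving terms yields exactly the functional forward equation stated in the proposition.

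The only conceptual subtlety, and the reason this is not entirely a cut-and-paste of Proposition~\ref{functional_forward}, is that the drop-off rate in the second sum couples $Y_t^N(r)$ to the entire empirical measure through the total-bikes-in-transit term $M/N-\sum_{n'}\sum_{r'}n'Y_t^N(r',n')$. This coupling prevents the generator on $Y_t^N(r)$ from being autonomous across utilization classes, but it does not interfere with the discretization argument itself since the rate is still a measurable functional of the current state; it is simply carried along inside the expectation. This is precisely the feature that will force the mean field and CLT analyses in Sections~\ref{Fluid_Limit} and~\ref{Diffusion_Limit} to track the joint vector rather than separate one-dimensional projections.
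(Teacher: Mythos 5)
Your proposal is correct and follows essentially the same route as the paper, which simply invokes the discretization argument of Proposition~\ref{functional_forward} applied to the empirical-measure transition rates; you actually spell out the $\Delta$-increment expansion in more detail than the paper does, and your observations about transitions in other utilization classes leaving $Y_t^N(r)$ fixed and about the coupling through the bikes-in-transit term are both accurate.
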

 
  \begin{proof}
  The proof is similar to the proof of Proposition~\ref{functional_forward}.
  \end{proof}

 \begin{corollary}
  The time derivative of the mean of $Y_{t}^{N}(r,n)$ is given by 
 \begin{eqnarray}
\updot{\mathbb{E}}[Y_{t}^{N}(r,n)]  \nonumber  &=& \mathbb{E}\left[ \frac{1}{rNR^N_{\max}} \left(Y^{N}_{t}(r,n+1)\mathbf{1}_{n<K}-Y^{N}_{t}(r,n)\mathbf{1}_{n>0}\right) \right] \\
& & + \mathbb{E}\left[ \left(\frac{M}{N}-\sum_{n'}\sum_{r'}n'Y_{t}^{N}(r',n')\right)  \left(Y^{N}_{t}(r,n-1)\mathbf{1}_{n>0}-Y^{N}_{t}(r,n)\mathbf{1}_{n<K}\right) \right].\nonumber\\
 \end{eqnarray}
 for $n=0,\cdots,K$. 
 Denote $\Sigma_{i,j}^{N}(r,t)=\mathrm{Cov}[Y_{t}^{N}(r,i),Y_{t}^{N}(r,j)]$. When $i=j$, 
the time derivative of the variance term $\mathrm{Var}[Y_{t}^{N}(r,i)]$ is given by 
   \begin{eqnarray}
\updot{\mathrm{Var}}[Y_{t}^{N}(r,i)]  \nonumber &=& \updot{\mathbb{E}}[Y_{t}^{N}(r,i)^2]-2\updot{\mathbb{E}}[Y_{t}^{N}(r,i)]\mathbb{E}[Y_{t}^{N}(r,i)]\nonumber \\
 &=&  \frac{2}{rNR^N_{\max}}\left(\Sigma_{i,i+1}^{N}(r,t)\mathbf{1}_{i<K}-\Sigma_{i,i}^{N}(r,t)\mathbf{1}_{i>0}\right)+\frac{2M}{N}\left(\Sigma^{N}_{i,i-1}(r,t)\mathbf{1}_{i>0} -\Sigma^{N}_{i,i}(r,t)\mathbf{1}_{i<K}\right)\nonumber \\
 & &- 2 \sum_{n'}\sum_{r'}n'\left(\mathrm{Cov}\left[Y_{t}^{N}(r,i),Y_{t}^{N}(r',n')Y^{N}_{t}(r,i-1)\right]\mathbf{1}_{i>0}\right.\nonumber \\
 & &\left.-\mathrm{Cov}\left[Y_{t}^{N}(r,i),Y_{t}^{N}(r',n')Y^{N}_{t}(r,i) \right]\mathbf{1}_{i<K}\right)\nonumber \\
  & &+ \frac{1}{rN^2R^N_{\max}} \left(\mathbb{E}\left[Y^{N}_{t}(r,i+1) \right]\mathbf{1}_{i<K}+\mathbb{E}\left[Y^{N}_{t}(r,i)\right]\mathbf{1}_{i>0}\right)\nonumber  \\
  & &+\frac{M}{N^2}\left(\mathbb{E}\left[Y^{N}_{t}(r,i-1)\right]\mathbf{1}_{i>0}+\mathbb{E}\left[Y^{N}_{t}(r,i)\right]\mathbf{1}_{i<K}\right)\nonumber \nonumber \\
& & -\frac{1}{N} \sum_{n'}\sum_{r'}n'\left(\mathbb{E}\left[Y_{t}^{N}(r',n')Y^{N}_{t}(r,i-1)\right]\mathbf{1}_{i>0}+\mathbb{E}\left[Y_{t}^{N}(r',n')Y^{N}_{t}(r,i) \right]\mathbf{1}_{i<K}\right).\nonumber\\
 \end{eqnarray}
  When $|i-j|>1$, the time derivative of the covariance term $\mathrm{Cov}[Y_{t}^{N}(r,i), Y_{t}^{N}(r,j)]$ is given by
   \begin{eqnarray}
& & \updot{\mathrm{Cov}}[Y_{t}^{N}(r,i), Y_{t}^{N}(r,j)]  \nonumber \\
&=&\updot{\mathbb{E}}[Y_{t}^{N}(r,i)Y_{t}^{N}(r,j)]-\updot{\mathbb{E}}[Y_{t}^{N}(r,i)]\mathbb{E}[Y_{t}^{N}(r,j)] -\updot{\mathbb{E}}[Y_{t}^{N}(r,j)]\mathbb{E}[Y_{t}^{N}(r,i)]\nonumber \\
&=& \frac{1}{rNR^N_{\max}}\left[\Sigma_{i+1,j}^{N}(r,t)\mathbf{1}_{i<K}+\Sigma_{i,j+1}^{N}(r,t)\mathbf{1}_{j<K}-\Sigma_{i,j}^{N}(r,t)\left(\mathbf{1}_{j>0}+\mathbf{1}_{i>0}\right)\right]\nonumber \\
& &+\frac{M}{N}\left[\Sigma^{N}_{i-1,j}(r,t)\mathbf{1}_{i>0}+\Sigma^{N}_{i,j-1}(r,t)\mathbf{1}_{j>0} -\Sigma^{N}_{i,j}(r,t)\left(\mathbf{1}_{i<K}+\mathbf{1}_{j<K}\right)\right]\nonumber \\
 & &-  \sum_{n'}\sum_{r'}n'\left(\mathrm{Cov}\left[Y_{t}^{N}(r,j),Y_{t}^{N}(r',n')Y^{N}_{t}(r,i-1)\right]\mathbf{1}_{i>0}-\mathrm{Cov}\left[Y_{t}^{N}(r,j),Y_{t}^{N}(r',n')Y^{N}_{t}(r,i) \right]\mathbf{1}_{i<K}\right)\nonumber \\
  & &-  \sum_{n'}\sum_{r'}n'\left(\mathrm{Cov}\left[Y_{t}^{N}(r,i),Y_{t}^{N}(r',n')Y^{N}_{t}(r,j-1)\right]\mathbf{1}_{j>0}-\mathrm{Cov}\left[Y_{t}^{N}(r,i),Y_{t}^{N}(r',n')Y^{N}_{t}(r,j) \right]\mathbf{1}_{j<K}\right).\nonumber \\
 \end{eqnarray}
and when $j=i+1$, the time derivative of the covariance term $\mathrm{Cov}[Y_{t}^{N}(r,i), Y_{t}^{N}(r,i+1)]$ is given by
\begin{eqnarray}
& & \updot{\mathrm{Cov}}[Y_{t}^{N}(r,i), Y_{t}^{N}(r,i+1)]  \nonumber \\
&=&\updot{\mathbb{E}}[Y_{t}^{N}(r,i)Y_{t}^{N}(r,i+1)]-\updot{\mathbb{E}}[Y_{t}^{N}(r,i)]\mathbb{E}[Y_{t}^{N}(r,i+1)] -\updot{\mathbb{E}}[Y_{t}^{N}(r,i+1)]\mathbb{E}[Y_{t}^{N}(r,i)]\nonumber \\
&=&\frac{1}{rNR^N_{\max}}\left[\Sigma_{i+1,i+1}^{N}(r,t)+\Sigma_{i,i+2}^{N}(r,t)\mathbf{1}_{i<K-1}-\Sigma_{i,i+1}^{N}(r,t)\left(1+\mathbf{1}_{i>0}\right)\right]\nonumber \\
& &+\frac{M}{N}\left[\Sigma^{N}_{i-1,i+1}(r,t)\mathbf{1}_{i>0}+\Sigma^{N}_{i,i}(r,t) -\Sigma^{N}_{i,i+1}(r,t)(1+\mathbf{1}_{i<K-1})\right]\nonumber \\
 & &-  \sum_{n'}\sum_{r'}n'\left(\mathrm{Cov}\left[Y_{t}^{N}(r,i+1),Y_{t}^{N}(r',n')Y^{N}_{t}(r,i-1)\right]\mathbf{1}_{i>0}\right.\nonumber\\
 & &\left.-\mathrm{Cov}\left[Y_{t}^{N}(r,i+1),Y_{t}^{N}(r',n')Y^{N}_{t}(r,i) \right]\right)\nonumber \\
  & &-  \sum_{n'}\sum_{r'}n'\left(\mathrm{Cov}\left[Y_{t}^{N}(r,i),Y_{t}^{N}(r',n')Y^{N}_{t}(r,i)\right]-\mathrm{Cov}\left[Y_{t}^{N}(r,i),Y_{t}^{N}(r',n')Y^{N}_{t}(r,i+1) \right]\mathbf{1}_{i<K-1}\right)\nonumber \\
& &-\frac{\mathbb{E}\left[Y_{t}^{N}(r,i+1)\right]}{rN^2R_{\text{max}}}  -\frac{M}{N^2}\mathbb{E}\left[Y_{t}^{N}(r,i)\right]+\frac{1}{N}\sum_{n'}\sum_{r'}n'\mathbb{E}\left[Y_{t}^{N}(r',n')Y_{t}^{N}(r,i)\right] .\nonumber\\
\end{eqnarray}

 \end{corollary}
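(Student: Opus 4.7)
The plan is to derive each of the three identities in the corollary by specializing Proposition~\ref{functional_forward_Y} to an appropriate polynomial test function $f$ and then simplifying. Every transition of $Y_t^N(r)$ shifts the coordinate vector by an increment of the form $\frac{1}{N}(\mathbf{1}_{r,m'} - \mathbf{1}_{r,m})$ with $m' = m \pm 1$, so for any polynomial $f$ in the coordinates the difference $f(Y+\Delta) - f(Y)$ admits an exact finite Taylor expansion, and all expectations in Proposition~\ref{functional_forward_Y} reduce to expectations of polynomials in the $Y_t^N(r,n)$'s.

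First I would handle the mean by setting $f(Y_t^N(r)) = Y_t^N(r,n)$. Only four transitions change this coordinate: a retrieval originating from a station with $n$ bikes (source component $n$, increment $-1/N$), a retrieval originating from a station with $n+1$ bikes (destination $n$, increment $+1/N$), a return originating from a station with $n$ bikes (increment $-1/N$), and a return originating from a station with $n-1$ bikes (increment $+1/N$). Multiplying each increment by its rate given in Section~\ref{Sec_Bike_Model} and summing yields precisely the stated formula for $\updot{\mathbb{E}}[Y_t^N(r,n)]$. The indicator $\mathbf{1}_{n>0}$ appears on retrievals exiting state $n$ and on returns entering state $n$ (which require a state $n-1$ source), while $\mathbf{1}_{n<K}$ appears on the analogous boundary constraints.

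For the variance, set $f(y) = y(r,i)^2$ and use $(y(r,i)+\Delta)^2 - y(r,i)^2 = 2y(r,i)\Delta + \Delta^2$, where $\Delta \in \{-1/N,0,+1/N\}$ is the increment of the $i$-th coordinate on that transition. Substituting into Proposition~\ref{functional_forward_Y} and using the relation $\updot{\mathrm{Var}}[Y_t^N(r,i)] = \updot{\mathbb{E}}[Y_t^N(r,i)^2] - 2\,\mathbb{E}[Y_t^N(r,i)]\,\updot{\mathbb{E}}[Y_t^N(r,i)]$, the linear-in-$\Delta$ contributions combine with the mean derivative to produce covariance terms $\Sigma_{i,i\pm 1}^N(r,t)$ and $\Sigma_{i,i}^N(r,t)$, while the quadratic-in-$\Delta$ contributions contribute $1/N^2$ times the transition rates and thus yield the noise (martingale-bracket) terms that are linear in $\mathbb{E}[Y_t^N(r,\cdot)]$. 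The covariance-type cross terms involving $\sum_{n',r'} n' Y_t^N(r',n')$ come from the fact that the return rate itself depends on $Y_t^N(r',n')$, so the linear-in-$\Delta$ contribution couples $Y_t^N(r,i)$ to the full process. The covariance formulas are obtained identically with $f(y) = y(r,i)y(r,j)$ and the product-rule expansion $(y_1+\Delta_1)(y_2+\Delta_2) - y_1 y_2 = y_1\Delta_2 + y_2\Delta_1 + \Delta_1\Delta_2$; the cross term $\Delta_1 \Delta_2$ is nonzero only for a transition that simultaneously alters coordinates $i$ and $j$, which forces $\{i,j\}$ to equal the source/destination pair and thus requires $|i-j| = 1$. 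For $|i-j|>1$ this term vanishes and only the covariance-type contributions survive; for $j=i+1$ the two transitions "retrieve at $i+1$" and "return at $i$" each contribute $-1/N^2$ times the associated rate, producing the three extra terms $-\frac{\mathbb{E}[Y_t^N(r,i+1)]}{rN^2 R_{\max}}-\frac{M}{N^2}\mathbb{E}[Y_t^N(r,i)]+\frac{1}{N}\sum_{n',r'} n'\mathbb{E}[Y_t^N(r',n')Y_t^N(r,i)]$ visible in the last line of the stated formula.

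The main obstacle is purely bookkeeping rather than conceptual: one must carefully track every indicator $\mathbf{1}_{n>0}$ and $\mathbf{1}_{n<K}$ across the four transitions contributing to each coordinate, then correctly combine the linear-in-$\Delta$ contributions with $-2\mathbb{E}[Y]\updot{\mathbb{E}}[Y]$ (resp.\ $-\mathbb{E}[Y_i]\updot{\mathbb{E}}[Y_j] - \mathbb{E}[Y_j]\updot{\mathbb{E}}[Y_i]$) so that the non-covariance remainder cancels, leaving only $\Sigma^N_{\cdot,\cdot}(r,t)$-terms plus the $1/N^2$ noise. The asymmetry in the indicators for the $j=i+1$ case (e.g.\ $\mathbf{1}_{i<K-1}$ appearing alongside unconditioned $1$'s) arises because "retrieve at $i+1$" is already constrained by $i+1 \leq K$, making the source-side indicator automatic, whereas the destination-side indicator $\mathbf{1}_{i+2 \leq K}$ becomes $\mathbf{1}_{i<K-1}$; matching these boundary conventions to the stated formula is the most error-prone step and is best organized by writing out the four (resp.\ eight) transition contributions explicitly and then applying the algebraic identities above in a single pass.
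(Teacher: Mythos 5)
Your proposal is correct and follows essentially the same route as the paper: both derive all three identities by specializing Proposition~\ref{functional_forward_Y} to the test functions $f = Y_t^N(r,i)$, $f = Y_t^N(r,i)^2$, and $f = Y_t^N(r,i)Y_t^N(r,j)$ and tracking the $\pm 1/N$ increments of each coordinate across the relevant transitions, with the $j=i+1$ case treated separately because a single transition can then move both coordinates simultaneously. Your explicit accounting of the $\Delta^2$ and $\Delta_1\Delta_2$ cross terms (vanishing for $|i-j|>1$, producing the $1/N^2$-scale terms for $j=i+1$) is exactly the bookkeeping the paper's proof carries out.
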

 \begin{proof}
The time derivatives of $\mathbb{E}[Y_t^N(r,i)]$ and $\mathrm{Var}[Y_t^N(r,i)]$ come directly from applying Proposition \ref{functional_forward_Y} with $f(Y_{t}(r))=Y_{t}(r,i), Y_{t}^2(r,i)$ respectively. For the covariance term, we let $f(Y_t^N(r))=f(Y^{N}_{t}(r,i),Y^{N}_{t}(r,j))$ to prove the result.
 \end{proof}

Although we have equations for the moments of the empirical process and individual stations, it is still difficult to analyze them and gain insights from them directly.  One reason is that even though we have reduced the dimensionality of the analysis significantly, we have not removed the non-closure property of the differential equations.  Thus, we need to develop a new approach that will allow us to get around this complication.  The method that we choose to use is asymptotic analysis and will be described in more details in the sequel.  However, before we get to the asymptotic analysis we state some technicalities about weak convergence.

 \subsection{Preliminaries of Weak Convergence}

Following \citet{ko2016strong}, we assume that all random variables in this paper are defined on a common probability space $(\Omega, \mathcal{F}, \mathbb{P})$.  Moreover, for all positive integers $k$, we let $\mathcal{D}([0 , \infty), \mathbb{R}^k)$ be the space of right continuous functions with left limits (RCLL) in $\mathbb{R}^k$ that have a time domain in $[0, \infty)$.  As is usual, we endow the space $\mathcal{D}([0 , \infty), \mathbb{R}^k)$ with the usual Skorokhod $J_1$ topology, and let $M^k$ be defined as the Borel $\sigma$-algebra associated with the $J_1$ topology. We also assume that all stochastic processes are measurable functions from our common probability space $(\Omega, \mathcal{F}, \mathbb{P})$ into $(\mathcal{D}([0 , \infty), \mathbb{R}^k), M^k)$.   Thus, if $\{\zeta\}^\infty_{n=1}$ is a sequence of stochastic processes, then the notation $\zeta^n \rightarrow \zeta$ implies that the probability measures that are induced by the $\zeta^n$'s on the space $(\mathcal{D}([0 , \infty), \mathbb{R}^k), M^k)$ converge weakly to the probability measure on the space $(\mathcal{D}([0 , \infty), \mathbb{R}^k), M^k)$ induced by $\zeta$.  For any $x \in (\mathcal{D}([0 , \infty), \mathbb{R}^k), M^k)$  and any $T > 0$, we define 
\begin{equation}
||x||_T \equiv \sup_{0 \leq t \leq T}  \ \max_{i = 1,2,...,k} |x_i(t)|
\end{equation}
and note that $\zeta^n$ converges almost surely to a continuous limit process $\zeta$ in the $J_1$ topology if and only if 
\begin{equation}
||\zeta^n - \zeta||_T \to 0 \quad a.s.
\end{equation}
for every $T > 0$.

\section{Mean Field Limit} \label{Fluid_Limit}

In this section, we prove the mean field limit for our bike sharing model.  A mean field limit describes the large station dynamics of the bike sharing network over time.  Deriving the mean field limit allows us to obtain new insights on average system dynamics, when the demand for bikes and the number of stations are very large. Thus, we avoid the need to study an $N$-dimensional CTMC and compute its steady state distribution in this high dimensional setting. 

First, we state the important assumptions that will be used through out the paper, to ensure the existence of a mean field limit of our model.

\begin{assumption}\label{assumption}
 There exists a probability measure $I(r,k)$  on $]0,1] \times	 \mathbb{N}$ with finite support and $\Lambda > 0$ such that, as $N$ tends to infinity, we have
\begin{itemize}
\item[i)] $\frac{1}{N}\sum_{i=1}^{N}\mathbf{1} _{(r_i^{N},K_i^N)}\Rightarrow I(r,k)$,
\item[ii)]$NR_{\max}^{N}\rightarrow \Lambda^{-1}$,
\item[iii)]$\frac{M}{N}\rightarrow \gamma$.
\end{itemize}
\end{assumption}

Now we state the main theorem in this section that proves the convergence of empirical process to its mean field limit.
\begin{theorem}[Functional Law of Large Numbers]\label{fluid_limit}
Let $|\cdot |$ denote the Euclidean norm in $\mathbb{R}^{K+1}$. Under Assumption~\ref{assumption}, suppose that $Y_{0}^{N}\xrightarrow{p} y_0$,  then we have for any $\epsilon>0$ and $t_0>0$,
$$\lim_{N\rightarrow \infty}P\left(\sup_{t\leq t_0}|Y_t^N-y_t|>\epsilon\right)=0.$$
Here $y_t=(y_t(0),\cdots,y_t(K))$, where $y_{t}(k)=\int_{0}^{1}dy_{t}(r,k)$ for $k=0,\cdots,K$. And $y_t$ is the unique solution to the following differential equation starting at $y_0$
\begin{equation}\label{diff_eqn}
\shortdot{y}_{t}=b(y_{t})
\end{equation}

where $b:[0,1]^{K+1}\rightarrow \mathbb{R}^{K+1}$ is a vector field satisfies
\begin{eqnarray}\label{eqn:b}
b(y_{t})&=&\iint\limits_{]0,1]\times [0,...,K]}\left[ \frac{\Lambda}{r}(\mathbf{1}_{(r,n-1)}-\mathbf{1}_{(r,n)})\mathbf{1}_{n>0}+\left(\gamma-\sum_{n}\int_{0}^{1} ndy_{t}(r,n) \right)(\mathbf{1}_{(r,n+1)}-\mathbf{1}_{(r,n)})\mathbf{1}_{n<K}\right] dy_{t}(r,n),\nonumber \\
\end{eqnarray}
or componentwise
$$b(y_{t})(0)=\underbrace{-\int_{0}^{1}\left(\gamma-\sum_{n}\int_{0}^{1} ndy_{t}(r,n) \right)dy_{t}(r,0)}_{\text{return a bike to a no-bike station}}+\underbrace{\int_{0}^{1}\frac{\Lambda}{r}dy_{t}(r,1)}_{\text{retrieve a bike from a 1-bike station}},$$

\begin{equation*}
\begin{split}
b(y_{t})(k)=& \underbrace{\int_{0}^{1}\frac{\Lambda}{r}dy_{t}(r,k+1)}_{\text{retrieve a bike from a $k+1$-bike station}}-\underbrace{\int_{0}^{1}\left(\frac{\Lambda}{r}+\gamma-\sum_{n}\int_{0}^{1} ndy_{t}(r,n) \right)dy_{t}(r,k)}_{\text{retrieve and return a bike to a $k$-bike station}}\\
&+\underbrace{\int_{0}^{1}\left(\gamma-\sum_{n}\int_{0}^{1} ndy_{t}(r,n) \right)dy_{t}(r,k-1)}_{\text{return a bike to a $k-1$-bike station}},
\end{split}
\end{equation*}

for $ k=1,...,K-1$, and 
$$b(y_{t})(K)=\underbrace{-\int_{0}^{1}\frac{\Lambda}{r} dy_{t}(r,K)}_{\text{retrieve a bike from a $K$-bike station}}+\underbrace{\int_{0}^{1}\left(\gamma-\sum_{n}\int_{0}^{1} ndy_{t}(r,n) \right)dy_{t}(r,K-1)}_{\text{return a bike to a $K-1$-bike station}}.$$
\end{theorem}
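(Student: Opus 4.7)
The plan is to apply the standard martingale approach for density-dependent CTMCs, in the spirit of Kurtz and Ethier--Kurtz, following the framework developed by \citet{fricker2012mean}. The first step is a semimartingale decomposition: applying Proposition \ref{functional_forward_Y} with $f(Y^N_t(r)) = Y^N_t(r,n)$ (equivalently, Dynkin's formula for the CTMC $Y^N_t$) and summing over the discrete $r$-values, I obtain for each coordinate
$$Y^N_t(n) = Y^N_0(n) + \int_0^t b^N(Y^N_s)(n)\,ds + M^N_t(n),$$
where $M^N_t(n)$ is a local martingale and $b^N$ is the finite-$N$ drift obtained by reading off the transition rates preceding Proposition \ref{functional_forward_Y}. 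Under Assumption \ref{assumption}, the scalings $NR^N_{\max} \to \Lambda^{-1}$ and $M/N \to \gamma$, together with the assumed convergence of the initial empirical distribution, ensure that $b^N(y) \to b(y)$ uniformly on compact subsets of the simplex, where $b$ is the vector field defined in \eqref{eqn:b}.

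Next, I would verify that $b$ is Lipschitz on the relevant state space. Each summand in \eqref{eqn:b} is a bounded linear or bilinear expression in the coordinates of $y_t$: the aggregate returned-bike rate $\gamma - \sum_n \int n\,dy_t(r,n)$ is bounded by $\gamma$ since $y_t$ is a subprobability vector with $n \leq K$, and the coefficient $\Lambda/r$ is bounded provided the support of $I$ is bounded away from $r = 0$ (or, more generally, provided $1/r$ is integrable against $I$). This yields a uniform Lipschitz constant $L$ for $b$, so Picard--Lindel\"of gives both existence and uniqueness of $y_t$ on $[0,t_0]$ starting from $y_0$.

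The third step is to control the martingale part. Each transition of $Y^N_t$ shifts at most two coordinates by $\pm 1/N$, and the total jump rate at any time is at most $\sum_i \lambda_i + M\mu = O(N)$. Consequently, the predictable quadratic variation of each component satisfies $\langle M^N(n) \rangle_{t_0} = O(t_0/N)$, and Doob's $L^2$ maximal inequality yields
$$\mathbb{E}\Bigl[\sup_{s \leq t_0} |M^N_s(n)|^2\Bigr] \;\leq\; 4\,\mathbb{E}\bigl[|M^N_{t_0}(n)|^2\bigr] \;=\; O(t_0/N) \;\longrightarrow\; 0.$$

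Finally, setting $e^N_t = Y^N_t - y_t$ and splitting the drift difference as $b^N(Y^N_s) - b(y_s) = [b^N(Y^N_s) - b(Y^N_s)] + [b(Y^N_s) - b(y_s)]$, the uniform drift convergence together with the Lipschitz bound on $b$ yields
$$\sup_{s \leq t} |e^N_s| \;\leq\; |Y^N_0 - y_0| + \varepsilon_N(t_0) + \sup_{s \leq t_0} |M^N_s| + L\int_0^t \sup_{u \leq s}|e^N_u|\,du,$$
where $\varepsilon_N(t_0) \to 0$ deterministically. Gronwall's inequality gives an exponential amplification of the three source terms, and Markov's inequality applied to the martingale term, together with $Y^N_0 \xrightarrow{p} y_0$, yields the claimed convergence in probability uniformly on $[0,t_0]$. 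The main technical obstacle will be controlling the singularity of $\Lambda/r$ as $r \to 0$: one must either restrict to utilization profiles bounded away from zero, or verify that the mass of $I$ near $r = 0$ is negligible enough (via finite moments of $1/r$ under $I$) to keep the drift globally Lipschitz on the relevant state space.
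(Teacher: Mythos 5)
Your proposal is correct and follows essentially the same route as the paper's proof: a semimartingale (Dynkin) decomposition of $Y^N_t$, a Lipschitz bound on $b$ (with the $\Lambda/r$ singularity handled exactly as the paper does, by noting $r_i^N \geq \Lambda/C$ once $\max_i \lambda_i^N \leq C$, so the integral effectively runs over $[\Lambda/C,1]$), a Doob $L^2$ bound showing the martingale's quadratic variation is $O(t_0/N)$, deterministic convergence of the finite-$N$ drift $\beta$ to $b$, and finally Gronwall plus Chebyshev. The only cosmetic difference is that the paper organizes the final step via explicit events $\Omega_0,\dots,\Omega_3$ and a stopping-time truncation before applying Doob's inequality, but the substance is identical.
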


\begin{proof}
A similar theorem is given in the paper of \citet{fricker2012mean}, however, a proof is not given in their work.  Thus, to make our paper self contained, we provide a full proof of the mean field limit for the convenience of the reader as it is essential for our future results.  Our proof exploits Doob's inequality for martingales and Gronwall's lemma. Moreover, we use Proposition~\ref{bound}, Proposition~\ref{Lipschitz}, and Proposition~\ref{drift} in the proof, and they are stated after the proof of Theorem~\ref{fluid_limit}.

Since $Y_{t}^{N}$ is a semi-martingale, we have the following decomposition of $Y_{t}^{N}$ ,
\begin{equation}\label{semiY}
Y_{t}^{N}=\underbrace{Y_0^{N}}_{\text{initial condition}}+\underbrace{M_t^{N}}_{\text{martingale}}+\int_{0}^{t}\underbrace{\beta(Y_s^{N})}_{\text{drift term}}ds
\end{equation}
where
$Y_{0}^{N}$ is the initial condition and $M_{t}^{N}$ is a family of martingales.  Moreover, $\int_{0}^{t}\beta(Y_s^{N})ds$ is the integral of the drift term where the drift term is given by $\beta: [0,1]^{K+1}\rightarrow \mathbb{R}^{K+1}$ or
\begin{equation*}
\begin{split}
\beta(y)&=\sum_{x\neq y}(x-y)Q(y,x)\\
&= \sum_{n,r}\left[\frac{1}{rNR_{\max}}(\mathbf{1}_{(r,n-1)}-\mathbf{1}_{(r,n)})\mathbf{1}_{n>0}+\left(\frac{M}{N}-\sum_{n'}\sum_{r'}n'y(r',n')\right)(\mathbf{1}_{(r,n+1)}-\mathbf{1}_{(r,n)})\mathbf{1}_{n<K} \right]y(r,n).
\end{split}
\end{equation*}

We want to compare the empirical measure $Y_{t}^{N}$ with the mean field limit $y_{t}$ defined by
\begin{equation}
y_t=y_0+\int_{0}^{t}b(y_s)ds.
\end{equation}

The remaining of the proof of this theorem can be found in the Appendix.

\end{proof}

\begin{proposition}[Bounding martingales]\label{bound}
For any stopping time $T$ such that $\mathbb{E}(T)<\infty$, we have
\begin{equation}
\mathbb{E}\left(\sup_{t\leq T}|M_{t}^{N}|^2\right)\leq 4\mathbb{E}\int_{0}^{T}\alpha(Y_{t}^{N})dt.
\end{equation}  
\begin{proof}
The proof is found in the Appendix.  
\end{proof}
\end{proposition}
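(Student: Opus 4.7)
The statement is the classical $L^2$-bound on the supremum of the martingale arising from the semimartingale decomposition of the jump process $Y_t^N$. The plan has three standard ingredients: Doob's maximal inequality, the optional sampling theorem, and identification of the predictable quadratic variation $\langle M^N \rangle$ with $\int_0^{\cdot}\alpha(Y_s^N)\,ds$.

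First, I would note that $M_t^N = Y_t^N - Y_0^N - \int_0^t \beta(Y_s^N)\,ds$ is, componentwise, a square-integrable martingale on bounded time intervals. Indeed, $Y_t^N$ is a finite state pure-jump Markov process (its state space is a finite subset of $[0,1]^{K+1}$ since values lie on the lattice $\{0,1/N,\ldots,1\}^{K+1}$ with fixed sum), so the rates $Q(y,x)$ are bounded and the compensator is integrable. This lets me invoke stopping-time arguments cleanly.

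Next, I would compute the predictable quadratic variation. For any coordinate $j$, the process $(M_t^N)_j^2 - \int_0^t \sum_{x\neq Y_s^N}(x_j - Y_s^N(j))^2 Q(Y_s^N,x)\,ds$ is a martingale — this is the standard Dynkin/Lévy formula for pure-jump Markov processes applied to $f(y)=y_j^2$, where the quadratic term in the generator gives exactly the above integrand. Summing over $j$, and taking $|\cdot|$ to be the Euclidean norm, gives
\begin{equation*}
\mathbb{E}\bigl[|M_{t\wedge T}^N|^2\bigr] \;=\; \mathbb{E}\int_0^{t\wedge T}\sum_{x\neq Y_s^N}|x - Y_s^N|^2 Q(Y_s^N,x)\,ds \;=\; \mathbb{E}\int_0^{t\wedge T}\alpha(Y_s^N)\,ds,
\end{equation*}
by the very definition of $\alpha$. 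The optional sampling theorem is applicable here because the rates (and hence the compensator increments over any bounded window) are bounded, which is why the hypothesis $\mathbb{E}(T)<\infty$ suffices.

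Finally, I would apply Doob's $L^2$ maximal inequality componentwise, giving $\mathbb{E}[\sup_{t\leq T}(M_t^N)_j^2] \leq 4\,\mathbb{E}[(M_T^N)_j^2]$, and sum over $j=0,\ldots,K$ to pass from coordinate-wise squares to $|M_t^N|^2$. Combining with the identity above yields
\begin{equation*}
\mathbb{E}\Bigl[\sup_{t\leq T}|M_t^N|^2\Bigr] \;\leq\; 4\,\mathbb{E}[|M_T^N|^2] \;=\; 4\,\mathbb{E}\!\int_0^T \alpha(Y_s^N)\,ds,
\end{equation*}
which is the claim. The only mildly delicate step is justifying the optional sampling identity for the unbounded stopping time $T$; I would handle this by first working with $T\wedge n$, using the uniform bound on the jump rates (and therefore on $\alpha$) to invoke dominated/monotone convergence as $n\to\infty$, so the bound $\mathbb{E}(T)<\infty$ carries the integral through to the limit.
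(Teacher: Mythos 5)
Your proof is correct and takes essentially the same route as the paper's: identify $\int_0^{\cdot}\alpha(Y_s^N)\,ds$ as the compensator of $|M_t^N|^2$, justify optional stopping at $T$ by localization together with the boundedness of the rates (hence of $\alpha$) for fixed $N$, and finish with Doob's $L^2$ maximal inequality with constant $4$. The only cosmetic difference is how the bracket is identified: you use the Dynkin/carr\'e-du-champ computation applied to $f(y)=y_j^2$ componentwise, while the paper derives the same identity pathwise by writing $(M_t^N)^2$ as stochastic integrals against the jump measure and its compensator and invoking a result of Darling and Norris to certify the martingale part; the two computations are equivalent.
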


\begin{proposition}[Asymptotic Drift is Lipschitz]\label{Lipschitz}
The drift function $b(y)$ given in Equation (\ref{eqn:b}) is a Lipschitz function with respect to the Euclidean norm in $\mathbb{R}^{K+1}$. 
\begin{proof}
The proof is found in the Appendix.  
\end{proof}
\end{proposition}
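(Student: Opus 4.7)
The plan is to isolate the two features that make $b$ more than a simple polynomial: the factor $\Lambda/r$ that appears inside the integrals, and the bilinear coupling through $\sum_n\int_0^1 n\,dy(r,n)$. For the first, I would show that the relative utilization is bounded away from zero in the limit. Since $R_i=\mu P_i/\lambda_i=1/(N\lambda_i)$ under the standing assumptions $\mu=1$ and $P_i=1/N$, we have $r_i^N = R_i/R_{\max}^N = 1/(N\lambda_i R_{\max}^N)$. Combining Assumption~\ref{assumption}(ii), which gives $NR_{\max}^N\to\Lambda^{-1}$, with the fact that the arrival rates $\lambda_i$ are bounded above by some $C>0$ (implicit in Assumption~\ref{assumption}(i), since $I$ is a probability measure on $]0,1]\times\{0,\ldots,K\}$ supported away from $r=0$), yields $r_i^N\ge \Lambda/C$ for all $N$ sufficiently large. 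Consequently $1/r\le C/\Lambda$ uniformly on the support of interest, which is the quantitative content of the bound cited in the proof of Theorem~\ref{fluid_limit}.

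With $1/r$ controlled, I would next decompose each coordinate $b(y)(k)$ into a sum of two types of terms: purely linear terms of the form $\int_0^1(\Lambda/r)\,dy(r,j)$, which are bounded linear functionals of $y$ with operator norm at most $C$, and bilinear terms of the form $\bigl(\gamma-\sum_n\int_0^1 n\,dy(r,n)\bigr)\int_0^1 dy(r,j)$. The quantity $\gamma-\sum_n\int_0^1 n\,dy(r,n)$ is itself an affine function of $y$ bounded in absolute value by $\gamma+K(K+1)/2$ on the admissible set (since $y$ is a sub-probability measure and $n\le K$), while $\int_0^1 dy(r,j)=y(j)\in[0,1]$. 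Hence every coefficient appearing in $b$ is uniformly bounded.

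The Lipschitz estimate then follows by the standard product rule: for linear functionals $f,g$ with $\|f\|_\infty,\|g\|_\infty\le M$,
\begin{equation*}
|f(y)g(y)-f(\tilde y)g(\tilde y)|\le M\bigl(|f(y)-f(\tilde y)|+|g(y)-g(\tilde y)|\bigr)\le 2ML\,|y-\tilde y|,
\end{equation*}
where $L$ is a Lipschitz constant of the linear maps $f,g$. Summing over the $K+1$ coordinates of $b$ and invoking Cauchy--Schwarz (noting that each coordinate has only a constant number of nonzero contributions) produces the desired Euclidean-norm bound $|b(y)-b(\tilde y)|\le L\,|y-\tilde y|$ with $L$ depending only on $\Lambda$, $C$, $\gamma$, and $K$. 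The main obstacle I anticipate is the very first step, establishing the uniform lower bound on $r$; everything downstream is a clean polynomial-on-bounded-domain calculation, but extracting $\Lambda/C$ rigorously requires careful use of Assumption~\ref{assumption} together with a tacit boundedness hypothesis on the arrival rates, and this is exactly the quantitative bound that the rest of the convergence machinery in Theorem~\ref{fluid_limit} depends on.
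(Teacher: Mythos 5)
Your proposal is correct and follows essentially the same route as the paper's appendix proof: both hinge on the uniform lower bound $r_i^N\ge\Lambda/C$ obtained from $NR_{\max}^N\to\Lambda^{-1}$ together with a bound $\max_i\lambda_i^N\le C$ (which the paper states as an explicit assumption at the start of its proof rather than extracting it from Assumption~\ref{assumption}(i)), after which $b$ is a polynomial map on a bounded domain. If anything you are more careful than the paper on the bilinear term, whose product-rule contribution introduces a $K$-dependence into the Lipschitz constant that the paper's stated constant $2(C+\gamma)$ glosses over.
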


\begin{proposition}[Drift is Asymptotically Close to a Lipschitz Drift]\label{drift}
Under Assumption~\ref{assumption}, we have for any $\epsilon>0$ and $s\geq 0$,
$$\lim_{N\rightarrow \infty}\mathbb{P}(|\beta(Y_s^{N})-b(Y_s^{N})|>\epsilon)= 0.$$
\begin{proof}
The proof is found in the Appendix.  
\end{proof}
\end{proposition}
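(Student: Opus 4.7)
The plan is to exploit the observation that $\beta(Y_s^N)$ and $b(Y_s^N)$ are evaluated at the \emph{same} random measure $Y_s^N$, and differ only in two scalar coefficients: the retrieval coefficient $1/(rNR^N_{\max})$ versus its limit $\Lambda/r$, and the free-bike coefficient $M/N$ versus its limit $\gamma$. Because the underlying measure is identical in both expressions, the discrepancy collapses to a deterministic object controlled by $|1/(NR^N_{\max})-\Lambda|$ and $|M/N-\gamma|$, both of which vanish under Assumption~\ref{assumption}.

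To make this precise, I would first note that $Y_s^N$ is a purely atomic probability measure on $(0,1]\times\{0,\dots,K\}$, so the integral defining $b(Y_s^N)$ collapses to a sum over exactly the same atoms $(r,n)$ that appear in $\beta(Y_s^N)$. The mass-balance term $\sum_{n',r'} n'\,y(r',n')$ in $\beta$ coincides with its integral counterpart $\sum_n \int n\, dy_t(r,n)$ in $b$ when $y = Y_s^N$, so that contribution cancels out of the difference. What remains is
\begin{align*}
\beta(Y_s^N)-b(Y_s^N) &= \sum_{r,n}\left(\frac{1}{NR^N_{\max}}-\Lambda\right)\frac{1}{r}(\mathbf{1}_{(r,n-1)}-\mathbf{1}_{(r,n)})\mathbf{1}_{n>0}\,Y_s^N(r,n) \\
&\quad + \sum_{r,n}\left(\frac{M}{N}-\gamma\right)(\mathbf{1}_{(r,n+1)}-\mathbf{1}_{(r,n)})\mathbf{1}_{n<K}\,Y_s^N(r,n).
\end{align*}

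Next I would apply the triangle inequality using $|\mathbf{1}_{(r,n\pm 1)}-\mathbf{1}_{(r,n)}|=\sqrt{2}$ in the Euclidean norm, together with $\sum_{r,n}Y_s^N(r,n)=1$ and the uniform lower bound $r_i^N\geq \Lambda/C$ for some $C>0$ established inside the proof of Proposition~\ref{Lipschitz}. This yields the deterministic bound
\begin{equation*}
|\beta(Y_s^N)-b(Y_s^N)|\leq \frac{\sqrt{2}\,C}{\Lambda}\left|\frac{1}{NR^N_{\max}}-\Lambda\right|+\sqrt{2}\left|\frac{M}{N}-\gamma\right|.
\end{equation*}

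Parts (ii) and (iii) of Assumption~\ref{assumption} send both terms on the right-hand side to zero as $N\to\infty$, so in fact $|\beta(Y_s^N)-b(Y_s^N)|\to 0$ deterministically for $N$ large, which is strictly stronger than the stated convergence in probability. The only potentially delicate point is guaranteeing that the $1/r$ factor stays bounded on the support of $Y_s^N$; this is the main place where genuine bike-sharing structure (rather than pure algebra) enters the argument, but it is already supplied by Proposition~\ref{Lipschitz}, whose proof provides $r_i^N\geq \Lambda/C$ uniformly in $i$ for all sufficiently large $N$.
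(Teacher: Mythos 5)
Your proposal is correct and follows essentially the same route as the paper's Appendix proof: both exploit that $Y_s^N$ is purely atomic so the integrals in $b(Y_s^N)$ collapse to the sums in $\beta(Y_s^N)$ (making the nonlinear mass-balance contribution cancel exactly), and both reduce the difference to a deterministic bound of the form $\tfrac{C}{\Lambda}\left|\tfrac{1}{NR^N_{\max}}-\Lambda\right|+\left|\tfrac{M}{N}-\gamma\right|$, which vanishes under Assumption~\ref{assumption} using the lower bound $r_i^N\geq\Lambda/C$ from the proof of Proposition~\ref{Lipschitz}. The only cosmetic difference is that the paper organizes the estimate as three separate terms bounded in squared Euclidean norm, while you cancel the mass-balance term up front and work with the norm directly.
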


We have proved mean field limit for our bike sharing model.  Our analysis yields that as the number of stations goes towards infinity, we can solve a set of ordinary differential equations to obtain important performance measure information. The performance measures that we can approximate are the mean proportion of empty or saturated stations, and the average number of bikes in circulation.  Moreover, we can analyze how factors such as fleet size and capacity change the value of the performance measures.  

However, just knowing the mean field limit is not enough.  One reason is that we would like to know more about the stochastic variability of the system, i.e. the fluctuations around the mean field limit.  The mean field limit cannot explain the stochastic fluctuations of the BSS and therefore, we need to analyze the BSS in a different way. Thus, in the subsequent section we develop a functional central limit theorem for our bike sharing model, and explain why it is important for understanding stochastic fluctuations of bike sharing networks.


\section{Diffusion Limit}\label{Diffusion_Limit}
In this section, we derive the diffusion limit of our stochastic empirical process bike sharing model.  Diffusion limits are critical for obtaining a deep understanding of the sample path behavior of stochastic processes.  One reason is that diffusion limits describe the fluctuations around the mean field limit and can help understand the variance or the asymptotic distribution of the stochastic process being analyzed.  We define our diffusion scaled bike sharing model by subtracting the mean field limit from the empirical measure process and rescaling it by $\sqrt{N}$.  Thus, we obtain the following expression for the diffusion scaled bike sharing empirical process
\begin{equation}
D_{t}^{N}=\sqrt{N}(Y_{t}^{N}-y_{t}) .
\end{equation}

Unlike many other ride-sharing systems such as Lyft or Uber, bike sharing programs cannot use pricing as a mechanism for redistributing bikes to satisfy demand in real-time.  For this reason, it is essential to understand the dynamics and behavior of $D_{t}^{N}$.  $D_{t}^{N}$ can be useful for describing the probability that the proportion of stations with $i$ bikes exceeds a threshold i.e. $\mathbb{P}(Y^N_t > x)$ for some $x \in [0,1]^{K+1}$.  It also describes this probability in a situation where there is no control or rebalancing of bikes in the system.  This knowledge of the uncontrolled system is especially important for newly-started bike sharing systems who are still in the process of gathering information about the system demand. The diffusion limit helps managers of BSS to understand the system dynamics and stability, which in turn helps them make short term and long term managerial decisions. It is also helpful in the case when the operators of the bike sharing system have no money for rebalancing the system to meet  real-time demand.

Using the semi-martingale decomposition of $Y^N_t$ given in Equation (\ref{semiY}), we can write a similar decomposition for $D_{t}^{N}$ as follows:
\begin{equation}
\begin{split}
D_{t}^{N}&=\sqrt{N}(Y_{0}^{N}-y_0)+\sqrt{N}M_{t}^{N}+\int_{0}^{t}\sqrt{N}[\beta(Y_{s}^{N})-b(y_s)]ds\\
&=D_{0}^{N}+\sqrt{N}M_{t}^{N}+\int_{0}^{t}\sqrt{N}[\beta(Y_{s}^{N})-b(Y_{s}^{N})]ds+\int_{0}^{t}\sqrt{N}[b(Y_{s}^{N})-b(y_s)]ds.
\end{split}
\end{equation}

Define 
\begin{equation}\label{sde}
D_{t}=D_{0}+\int_{0}^{t}b'(y_s)D_s ds+M_t
\end{equation}
where $b'(y)=\left(\frac{\partial b(y)(i)}{\partial y(j)}\right)_{ij}\in \mathbb{R}^{(K+1)\times (K+1)}$ and $M_{t}=(M_{t}(0),\cdots,M_{t}(K))\in \mathbb{R}^{K+1}$ is a real continuous centered Gaussian martingale, with Doob-Meyer brackets given by
\begin{eqnarray}
\boldlangle M(k) \boldrangle_{t}&=&\int_{0}^{t}(b_{+}(y_{s})(k)+b_{-}(y_{s})(k))ds,\nonumber\\
\boldlangle M(k),M(k+1) \boldrangle_{t}&=&-\int_{0}^{t}\left[\int_{0}^{1}\frac{\Lambda}{r}dy_{s}(r,k+1)\right. \nonumber\\
& &+ \left.\int_{0}^{1}\left(\gamma-\sum_{n}\int_{0}^{1}ndy_{s}(r,n)\right) dy_{s}(r,k)\right]ds \quad \text{for } k<K,\nonumber \\
\boldlangle M(k),M(j) \boldrangle_{t}&=&0\quad \text{for } |k-j|>1.
\end{eqnarray}
Here $b_{+}(y)=\max(b(y),0)$ and $b_{-}(y)=-\min(b(y),0)$ denote the positive and the negative parts of function $b(y)$ respectively.

Now we state the functional central limit theorem for the empirical measure process as follows,
\begin{theorem}[Functional Central Limit Theorem]\label{difftheorem}
Consider $D_{t}^{N}$ in $\mathbb{D}(\mathbb{R}_{+},\mathbb{R}^{K+1})$ with the Skorokhod $J_{1}$ topology, and suppose that 
\begin{itemize}
\item[1)]
$\limsup_{N\rightarrow \infty}\sqrt{N}\left(\min_{i}\lambda_{i}^{N}-\Lambda\right)<\infty,$
\item[2)]
$\limsup_{N\rightarrow \infty}\sqrt{N}\left(\frac{M}{N}-\gamma\right)< \infty.$
\end{itemize}
Then if $D_{0}^{N}$ converges in distribution to $D_{0}$, then $D_{t}^{N}$ converges to the unique OU process solving $D_{t}=D_{0}+\int_{0}^{t}b'(y_s)D_s ds+M_t$ in distribution.

\end{theorem}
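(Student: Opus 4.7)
The plan is to exploit the semimartingale decomposition of $Y_t^N$ already used in the proof of Theorem~\ref{fluid_limit} and rewrite it at the diffusion scale. Starting from
\[
D_t^N = D_0^N + \sqrt{N}\, M_t^N + \int_0^t \sqrt{N}\bigl[\beta(Y_s^N)-b(Y_s^N)\bigr]\,ds + \int_0^t \sqrt{N}\bigl[b(Y_s^N)-b(y_s)\bigr]\,ds,
\]
I would identify the limit of each piece: the initial term converges by hypothesis, the martingale converges to the Gaussian $M_t$ appearing in \eqref{sde} by Rebolledo's martingale FCLT, the first integral vanishes in probability because of assumptions 1) and 2), and the second integral yields the linear drift $\int_0^t b'(y_s) D_s \, ds$ by a first-order Taylor expansion. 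The convergence $D_t^N \Rightarrow D_t$ then follows once tightness and uniqueness of the OU limit are established.

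For the ``drift gap'' $\sqrt{N}[\beta(Y_s^N)-b(Y_s^N)]$, the discrepancy between $\beta$ and $b$ comes entirely from replacing $1/(rNR_{\max}^N)$ by $\Lambda/r$ and $M/N$ by $\gamma$ in the definition of $\beta$. Assumptions 1) and 2) imply $\sqrt{N}\,|NR_{\max}^N-1/\Lambda|$ and $\sqrt{N}\,|M/N-\gamma|$ are bounded, so this term is uniformly $O(1)$ in $L^1$ and, combined with Theorem~\ref{fluid_limit} (which confines $Y_s^N$ to a compact neighborhood of $y_s$), converges to an explicit drift correction that I would absorb into $\int_0^t b'(y_s)D_s\,ds$; in the cleanest version it vanishes. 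For the ``linearization'' $\sqrt{N}[b(Y_s^N)-b(y_s)]$, since $b$ is continuously differentiable by Proposition~\ref{Lipschitz} and $Y_s^N - y_s = D_s^N/\sqrt{N}$, a Taylor expansion gives $b'(y_s)D_s^N + R_s^N$ with remainder $|R_s^N| \le C |D_s^N|^2/\sqrt{N}$, which is asymptotically negligible on compact time intervals once tightness of $\{D_s^N\}$ is in hand.

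For the martingale piece, I would compute the predictable quadratic variation of $\sqrt{N} M_t^N$ directly from the transition rates. Each jump of $Y_t^N(r,n)$ is of size $1/N$ and fires at rate $O(N)$, so $\langle \sqrt N M^N(k) \rangle_t = N\langle M^N(k)\rangle_t$ is of order unity and, using Theorem~\ref{fluid_limit} to replace $Y_s^N$ by $y_s$ inside the rates, converges in probability to the Doob--Meyer bracket $\langle M(k)\rangle_t$ written in the statement; the off-diagonal bracket $\langle M(k),M(k+1)\rangle_t$ arises because a return-to-$k$ transition simultaneously empties a state-$(k{-}1)$ counter, giving the negative cross term, while $|k-j|>1$ forces $\langle M(k),M(j)\rangle_t=0$ since no single jump affects both coordinates. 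The maximum jump size of $\sqrt N M_t^N$ is $1/\sqrt N \to 0$, so Rebolledo's criterion (or Theorem~7.1 in Ethier--Kurtz Ch.~7) delivers convergence of $\sqrt N M^N$ to the continuous Gaussian martingale $M$ with the prescribed brackets.

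To finish, I would establish $C$-tightness of $\{D_t^N\}$ in $\mathcal{D}([0,\infty),\mathbb{R}^{K+1})$ via the Aldous criterion, using that the martingale part is tight (its brackets are uniformly controlled) and the drift part is controlled by a Gronwall argument identical in spirit to the one in Theorem~\ref{fluid_limit}: writing $g^N(t) := \sup_{s\le t}|D_s^N|$, bounding by the martingale maximum, the remainder, and $L\int_0^t g^N(s)\,ds$ produces $\mathbb{E}\,g^N(t) \le C_t$ on any compact $[0,t_0]$. Continuous mapping together with the uniqueness of the linear SDE \eqref{sde} (the coefficients are deterministic, with $b'(y_s)$ bounded on the compact trajectory of $y_s$) then forces every subsequential limit of $D_t^N$ to coincide with $D_t$. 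The main obstacle I anticipate is the careful analysis of the ``drift gap'' term: because $\beta$ and $b$ differ through the random normalizations $NR_{\max}^N$ and $M/N$, one must show that the $\sqrt{N}$-rescaled difference does not produce a spurious limiting drift, which is exactly what assumptions 1) and 2) are designed to ensure, and where the proof must be most attentive.
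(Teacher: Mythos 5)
Your proposal follows essentially the same route as the paper: the same semimartingale decomposition at diffusion scale, the martingale CLT (Ethier--Kurtz, Ch.~7) for $\sqrt{N}M^N$ with the same bracket computations, a Gronwall/second-moment bound yielding tightness, and a linearization of $b(Y^N_s)-b(y_s)$ (the paper uses the mean value theorem plus uniform continuity of $b'$ where you use a Taylor remainder). The one delicate point you flag --- that $\sqrt{N}\left[\beta(Y^N_s)-b(Y^N_s)\right]$ is only shown to be $O(1)$ under assumptions 1) and 2) rather than $o(1)$ --- is present in the paper as well (Proposition~\ref{driftbound} gives only boundedness), so your treatment is no less complete than the original.
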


To prove Theorem~\ref{difftheorem}, we take the following 4 steps,

\begin{itemize}
\item[1).]$\sqrt{N}M_{t}^{N}$ is a family of martingales independent of $D_{0}^{N}$ with Doob-Meyer brackets given by
\begin{eqnarray}
\boldlangle \sqrt{N}M^{N}(k)\boldrangle_{t}&=&\int_{0}^{t}(\beta_{+}(Y_{s}^{N})(k)+\beta_{-}(Y_{s}^{N})(k))ds,\nonumber\\
\boldlangle \sqrt{N}M^{N}(k),\sqrt{N}M^{N}(k+1)\boldrangle_{t}&=&-\int_{0}^{t}\sum_{r}\left[\frac{1}{rNR_{\max}}Y_{s}^{N}(r,k+1)\right.\nonumber\\
& & \left.+\left(\frac{M}{N}-\sum_{n'}\sum_{r'}n'Y_{s}^{N}(r',n')\right)Y_{s}^{N}(r,k)\right] ds \quad \text{for } k<K,\nonumber\\
\boldlangle \sqrt{N}M^{N}(k),\sqrt{N}M^{N}(j)\boldrangle_{t}&=&0\quad \text{for } |k-j|>1.
\end{eqnarray}

\item[2).]For any $T\geq 0$, $$\limsup_{N\rightarrow \infty}\mathbb{E}(|D_{0}^{N}|^2)<\infty \Rightarrow \limsup_{N\rightarrow \infty}\mathbb{E}(\sup_{0\leq t\leq T}|D_{t}^{N}|^2)<\infty. $$
\item[3).] If $(D_{0}^{N})_{N=1}^{\infty}$ is tight then $(D^{N})_{N=1}^{\infty}$ is tight and its limit points are continuous.
\item[4).]	If $D_{0}^{N}$ converges to $D_{0}$ in distribution, then $D_{t}^{N}$ converges to the unique OU process solving $D_{t}=D_{0}+\int_{0}^{t}b'(y_s)D_s ds+M_t$ in distribution.

\end{itemize}

\begin{lemma}\label{martingale-brackets}
$\sqrt{N}M_{t}^{N}$ is a family of martingales independent of $D_{0}^{N}$ with Doob-Meyer brackets given by
\begin{eqnarray}
\boldlangle \sqrt{N}M^{N}(k)\boldrangle_{t}&=&\int_{0}^{t}(\beta_{+}(Y_{s}^{N})(k)+\beta_{-}(Y_{s}^{N})(k))ds,\nonumber\\
\boldlangle \sqrt{N}M^{N}(k),\sqrt{N}M^{N}(k+1)\boldrangle_{t}&=&-\int_{0}^{t}\sum_{r}\left[\frac{1}{rNR_{\max}}Y_{s}^{N}(r,k+1)\right.\nonumber\\
& & \left.+\left(\frac{M}{N}-\sum_{n'}\sum_{r'}n'Y_{s}^{N}(r',n')\right)Y_{s}^{N}(r,k)\right] ds \quad \text{for } k<K,\nonumber\\
\boldlangle \sqrt{N}M^{N}(k),\sqrt{N}M^{N}(j)\boldrangle_{t}&=&0\quad \text{for } |k-j|>1.
\end{eqnarray}
\end{lemma}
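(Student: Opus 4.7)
The plan is to identify $\sqrt{N}M_t^N$ as the compensated martingale attached to the finite-state CTMC $Y_t^N$ and then to read off its Doob--Meyer brackets from the classical carr\'e-du-champ formula for pure-jump Markov processes.

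First, I would invoke Dynkin's formula: since $Y_t^N$ is a finite-state CTMC with generator $Q^N$ given by the two families of transition rates written just above Proposition~\ref{functional_forward_Y}, for every bounded function $f$ on the state space the process $f(Y_t^N) - f(Y_0^N) - \int_0^t Q^N f(Y_s^N)\, ds$ is a martingale with respect to the natural filtration $\mathcal{F}_t^N$ of $Y^N$. Applied componentwise to the bounded coordinate maps $f_k(y) = y(k)$, a direct calculation against the jump rates shows $Q^N f_k(y) = \beta(y)(k)$, so the martingale part $M_t^N = Y_t^N - Y_0^N - \int_0^t \beta(Y_s^N)\, ds$ of the semimartingale decomposition~(\ref{semiY}) is indeed a vector-valued martingale. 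Multiplying by the deterministic constant $\sqrt{N}$ preserves the martingale property, and the ``independence'' from $D_0^N$ reduces to the observation that $D_0^N$ is $\mathcal{F}_0^N$-measurable while $\sqrt{N}M^N$ has conditionally centered increments given $\mathcal{F}_0^N$.

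For the Doob--Meyer brackets I would use the standard identity
\begin{equation*}
\langle M^N(k), M^N(j)\rangle_t = \int_0^t \Gamma(f_k, f_j)(Y_s^N)\, ds,
\end{equation*}
where the carr\'e du champ
\begin{equation*}
\Gamma(f_k, f_j)(y) = \sum_{y' \neq y}(f_k(y') - f_k(y))(f_j(y') - f_j(y))\, Q^N(y, y')
\end{equation*}
is evaluated by summing $\Delta y(k)\,\Delta y(j)$ against the jump rate over the explicit jump list of $Y^N$. Every jump displaces exactly two adjacent coordinates $(r,n)$ and $(r, n \pm 1)$ by $\pm 1/N$, so each product $\Delta y(k)\,\Delta y(j)$ lies in $\{-1/N^2, 0, 1/N^2\}$, and the prefactor $N$ inherited from the $\sqrt{N}$-scaling leaves an integrand of order one. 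Three regimes arise. If $|k-j| > 1$, no single jump touches both coordinates, so the covariation vanishes. If $j = k$, the four contributing jump families are the retrievals at levels $k$ and $k+1$ and the returns at levels $k-1$ and $k$, whose squared increments, after the $\times N$ rescaling and summation in $r$, add up to the total rate of jumps affecting coordinate $k$, matching $\beta_+(Y^N)(k) + \beta_-(Y^N)(k)$ under the reading of $\beta_\pm$ as the drift contributions from positive and negative jumps of the coordinate. If $j = k+1$, only a retrieval at $(r, k+1)$ and a return at $(r, k)$ displace both coordinates, each in opposite directions, so $\Delta y(k)\,\Delta y(k+1) = -1/N^2$ for both families, and summation over $r$ produces the negative integrand stated in the lemma.

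The argument is otherwise a direct verification; the main obstacle is purely bookkeeping. One must track the indicators $\mathbf{1}_{n>0}$ and $\mathbf{1}_{n<K}$ under the index shifts $n \mapsto k \pm 1$ and check the boundary cases $k = 0$, $k = K$, and $k + 1 = K$ separately, so that retrievals from an empty state and returns to a full state are correctly suppressed in the final expressions. No probabilistic subtlety arises beyond Dynkin's formula and the elementary quadratic-variation formula for pure-jump martingales.
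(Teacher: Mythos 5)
Your proposal is correct and follows essentially the same route as the paper: Dynkin's formula identifies $M^N$ as the compensated martingale of the jump process, and the predictable brackets are read off from the generator acting on products of coordinates. The only cosmetic difference is that you evaluate the cross bracket $\boldlangle \sqrt{N}M^N(k),\sqrt{N}M^N(k+1)\boldrangle_t$ directly via the carr\'e du champ (product of coordinate increments summed against the jump rates), whereas the paper obtains it by polarization from $\boldlangle M^N(k)+M^N(k+1)\boldrangle_t$; your observation that the bracket vanishes for $|k-j|>1$ because no single jump displaces both coordinates is in fact a cleaner justification than the paper's appeal to ``independence.''
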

\begin{proof}
The proof is found in the Appendix. 
\end{proof}

\begin{proposition}\label{driftbound}
For any $s\geq 0$,
\begin{equation}
\limsup_{N\rightarrow \infty}\sqrt{N}\left|\beta(Y_{s}^{N})-b(Y_{s}^{N})\right|<\infty.
\end{equation}
\begin{proof}
The proof is found in the Appendix. 
\end{proof}
\end{proposition}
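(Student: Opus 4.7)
The plan is to expand $\beta(Y_s^N) - b(Y_s^N)$ explicitly and observe that, after cancellation, the only surviving pieces are multiplied by either $\frac{1}{NR_{\max}^N}-\Lambda$ or $\frac{M}{N}-\gamma$; the two rate hypotheses of Theorem~\ref{difftheorem} then control the result after scaling by $\sqrt{N}$.

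First I would substitute the empirical (atomic) measure $Y_s^N$ into both drift formulas. Because $Y_s^N$ is purely atomic, the integrals defining $b(Y_s^N)$ collapse into the same finite sums over $(r,n)$ that appear in $\beta(Y_s^N)$. Crucially, the ``bikes in circulation'' factor $\sum_n \int_0^1 n\, dY_s^N(r,n)$ inside $b$ reduces to $\sum_{n'}\sum_{r'} n'\, Y_s^N(r',n')$, matching the analogous factor in $\beta$ verbatim. Subtracting the two expressions cancels every $Y_s^N$-dependent combination except the scalar coefficient gaps, and one obtains
\begin{equation*}
\beta(Y_s^N) - b(Y_s^N) = \sum_{n,r}\Bigl[\tfrac{1}{r}\bigl(\tfrac{1}{NR_{\max}^N}-\Lambda\bigr)(\mathbf{1}_{(r,n-1)}-\mathbf{1}_{(r,n)})\mathbf{1}_{n>0} + \bigl(\tfrac{M}{N}-\gamma\bigr)(\mathbf{1}_{(r,n+1)}-\mathbf{1}_{(r,n)})\mathbf{1}_{n<K}\Bigr]\, Y_s^N(r,n).
\end{equation*}

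Next I would multiply by $\sqrt{N}$, pass the Euclidean norm inside the sum via the triangle inequality, and use three elementary facts: $|\mathbf{1}_{(r,n\pm 1)}-\mathbf{1}_{(r,n)}|=\sqrt{2}$ in $\mathbb{R}^{K+1}$; the total-mass identity $\sum_{n,r} Y_s^N(r,n)=1$; and the uniform lower bound $r \geq \Lambda/C$ for some $C>0$ that is established inside the proof of Proposition~\ref{Lipschitz} and is needed to tame the $1/r$ weight. Together these give
\begin{equation*}
\sqrt{N}\,|\beta(Y_s^N)-b(Y_s^N)| \;\leq\; \tfrac{\sqrt{2}\,C}{\Lambda}\,\sqrt{N}\Bigl|\tfrac{1}{NR_{\max}^N}-\Lambda\Bigr| \;+\; \sqrt{2}\,\sqrt{N}\Bigl|\tfrac{M}{N}-\gamma\Bigr|.
\end{equation*}

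Finally, I would invoke the model identity $\frac{1}{NR_{\max}^N}=\min_i\lambda_i^N$ (which follows from $R_i=\mu P_i/\lambda_i$ with $\mu=1$ and $P_i=1/N$, so that $R_{\max}^N=1/(N\min_i\lambda_i^N)$) to rewrite the first term on the right as $\sqrt{N}|\min_i\lambda_i^N - \Lambda|$. Hypotheses~(1) and~(2) of Theorem~\ref{difftheorem} then keep both of the $\sqrt{N}$-scaled coefficient differences bounded as $N\to\infty$, yielding the desired $\limsup$ bound. The main obstacle, or rather the one delicate point, is the handling of the $1/r$ weight; but since the $r$'s are bounded away from zero by the same argument used in Proposition~\ref{Lipschitz}, nothing new is needed beyond quoting that proof.
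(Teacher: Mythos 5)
Your proposal is correct and follows essentially the same route as the paper's appendix proof: both exploit the atomicity of $Y_s^N$ to collapse the integrals in $b$ (making the bikes-in-circulation terms cancel exactly), reduce the difference to the coefficient gaps $\frac{1}{NR_{\max}^N}-\Lambda$ and $\frac{M}{N}-\gamma$ weighted by $1/r\le C/\Lambda$ and total mass one, and then invoke the two $\sqrt{N}$-rate hypotheses of Theorem~\ref{difftheorem} via the identity $NR_{\max}^N = 1/\min_i\lambda_i^N$. The only cosmetic difference is that you carry out the cancellation in a single explicit display rather than bounding three terms separately as the paper does, which yields a slightly sharper constant but no new idea.
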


\begin{lemma}[Finite Horizon Bound]\label{L2bound}
For any $T\geq 0$, if $$\limsup_{N\rightarrow \infty}\mathbb{E}\left(|D_{0}^{N}|^2 \right) < \infty ,$$ then we have $$\limsup_{N\rightarrow \infty}\mathbb{E}\left(\sup_{0\leq t\leq T}|D_{t}^{N}|^2 \right) < \infty .$$
\end{lemma}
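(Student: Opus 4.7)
The plan is to carry out a Doob--Gronwall style $L^{2}$ estimate on the diffusion-scaled process, mirroring the almost-sure argument used in Theorem~\ref{fluid_limit}. Starting from the semi-martingale decomposition
\begin{equation}
D_{t}^{N}=D_{0}^{N}+\sqrt{N}M_{t}^{N}+\int_{0}^{t}\sqrt{N}\bigl[\beta(Y_{s}^{N})-b(Y_{s}^{N})\bigr]\,ds+\int_{0}^{t}\sqrt{N}\bigl[b(Y_{s}^{N})-b(y_{s})\bigr]\,ds,
\end{equation}
I use Proposition~\ref{Lipschitz} to bound the final integrand by $L|D_{s}^{N}|$, take the running supremum up to time $t\le T$, square, and apply the elementary inequality $(a+b+c+d)^{2}\le 4(a^{2}+b^{2}+c^{2}+d^{2})$. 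A single Cauchy--Schwarz step on each time-integral then produces, setting $\phi_{N}(t):=\mathbb{E}[\sup_{u\le t}|D_{u}^{N}|^{2}]$,
\begin{equation}
\phi_{N}(t)\le 4\mathbb{E}|D_{0}^{N}|^{2}+4\mathbb{E}\bigl[\sup_{u\le t}|\sqrt{N}M_{u}^{N}|^{2}\bigr]+4T\int_{0}^{t}\mathbb{E}\bigl[N|\beta(Y_{s}^{N})-b(Y_{s}^{N})|^{2}\bigr]ds+4L^{2}T\int_{0}^{t}\phi_{N}(s)\,ds.
\end{equation}

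The next step is to control the first three terms uniformly in $N$. The initial term is bounded by hypothesis. For the martingale term, Doob's $L^{2}$ maximal inequality applied coordinatewise to each $\sqrt{N}M^{N}(k)$ gives $\mathbb{E}[\sup_{u\le T}|\sqrt{N}M_{u}^{N}(k)|^{2}]\le 4\mathbb{E}\int_{0}^{T}(\beta_{+}(Y_{s}^{N})(k)+\beta_{-}(Y_{s}^{N})(k))\,ds$ via the Doob--Meyer bracket identity of the preceding lemma; Assumption~\ref{assumption} keeps $1/(rNR_{\max})$, $M/N$, and $\sum_{n'}n'Y^{N}(r',n')$ uniformly bounded (this is exactly the bound underlying $\alpha(y)\le 4(C+\gamma)/N$ in the proof of Theorem~\ref{fluid_limit}), so the resulting bound on $\mathbb{E}[\sup_{u\le T}|\sqrt{N}M_{u}^{N}|^{2}]$ is linear in $T$ with a constant independent of $N$. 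For the drift discrepancy term, Proposition~\ref{driftbound} delivers precisely the uniform bound $\sqrt{N}|\beta(Y_{s}^{N})-b(Y_{s}^{N})|\le K_{\beta}$ for all $s\le T$ and all sufficiently large $N$, so the third summand is at most $4T^{2}K_{\beta}^{2}$.

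Assembling these estimates yields $\phi_{N}(t)\le C_{0}(T)+4L^{2}T\int_{0}^{t}\phi_{N}(s)\,ds$ with $C_{0}(T)$ bounded in $N$, and since $|D_{t}^{N}|\le 2\sqrt{N}(K+1)$ deterministically for each fixed $N$, $\phi_{N}$ is a priori finite; Gronwall's lemma then gives $\phi_{N}(T)\le C_{0}(T)\exp(4L^{2}T^{2})$, and taking $\limsup_{N\to\infty}$ finishes the argument.

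The main obstacle is turning the pointwise-in-$s$ statement of Proposition~\ref{driftbound} into a bound uniform in $s\in[0,T]$, and verifying that the bracket of $\sqrt{N}M^{N}$ remains $O(1)$ in $N$ rather than blowing up. Both reduce to the observation that, under Assumption~\ref{assumption} together with the sharper scaling hypotheses $\sqrt{N}(\min_{i}\lambda_{i}^{N}-\Lambda)<\infty$ and $\sqrt{N}(M/N-\gamma)<\infty$ of Theorem~\ref{difftheorem}, all the jump-rate coefficients of the state-dependent $Q$-matrix (after the appropriate $\sqrt{N}$ or $N$ scaling) are bounded uniformly on the compact state space $[0,1]^{K+1}$; once these deterministic bounds are in hand, the Gronwall closure is immediate.
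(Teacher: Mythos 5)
Your proof is correct and takes essentially the same route as the paper's: the same semi-martingale decomposition of $D_t^N$, Proposition~\ref{driftbound} for the drift discrepancy, the Lipschitz bound from Proposition~\ref{Lipschitz}, the Doob/bracket bound on $\sqrt{N}M_t^N$, and Gronwall. The only cosmetic difference is that you square first and run Gronwall on $\phi_N(t)=\mathbb{E}[\sup_{u\le t}|D_u^N|^2]$ (checking a priori finiteness), whereas the paper applies Gronwall pathwise to $|D_t^N|$ and squares afterward; both close identically.
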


\begin{proof}
The proof is found in the Appendix. 
\end{proof}

\begin{lemma}\label{tightness}
If $(D_{0}^{N})_{N=1}^{\infty}$ is tight then $(D^{N})_{N=1}^{\infty}$ is tight and its limit points are continuous.
\end{lemma}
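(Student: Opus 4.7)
The plan is to establish tightness of $(D^N)_{N=1}^\infty$ in $\mathcal{D}([0,\infty),\mathbb{R}^{K+1})$ with the Skorokhod $J_1$ topology by applying the Aldous--Rebolledo criterion to the semi-martingale decomposition
\begin{equation*}
D_t^N = D_0^N + \sqrt{N}\,M_t^N + \int_0^t \sqrt{N}\bigl[\beta(Y_s^N)-b(Y_s^N)\bigr]\,ds + \int_0^t \sqrt{N}\bigl[b(Y_s^N)-b(y_s)\bigr]\,ds.
\end{equation*}
The strategy is to control each of the four terms separately and then combine them via the standard fact that a finite sum of tight processes is tight.

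First I would handle the easy pieces. The initial term $D_0^N$ is tight by hypothesis. For the compensator-error term, Proposition \ref{driftbound} gives $\sqrt{N}\,|\beta(Y_s^N)-b(Y_s^N)| = O(1)$ uniformly in $s$, so its time integral is uniformly Lipschitz in $t$, which is trivially $C$-tight. For the Lipschitz-drift term, Proposition \ref{Lipschitz} yields $\sqrt{N}\,|b(Y_s^N)-b(y_s)| \leq L\,|D_s^N|$, and Lemma \ref{L2bound} furnishes the bound $\limsup_N \mathbb{E}[\sup_{s\leq T}|D_s^N|^2] < \infty$, so this integral term is also uniformly Lipschitz on compact intervals in the $L^2$ sense and hence $C$-tight.

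The main work lies in the martingale term $\sqrt{N}\,M_t^N$. Here I would apply the Rebolledo criterion: it suffices to show that for every $T>0$ and every coordinate $k$, the predictable quadratic variation $\langle \sqrt{N} M^N(k)\rangle_t$ forms a tight family of processes whose modulus of continuity vanishes in probability. From the Doob--Meyer bracket computed in the preceding lemma,
\begin{equation*}
\langle \sqrt{N}M^N(k)\rangle_t = \int_0^t \bigl(\beta_+(Y_s^N)(k) + \beta_-(Y_s^N)(k)\bigr)\,ds,
\end{equation*}
and the integrand is uniformly bounded in $N$ and $s$ by a deterministic constant (using the bounds on $M/N$, $1/(rNR_{\max})$, and the fact that $y \in [0,1]^{K+1}$ established inside the proof of Theorem \ref{fluid_limit}). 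Thus the bracket is itself uniformly Lipschitz in $t$, which implies the Aldous condition: for any stopping times $\tau_N \leq T$ and any $\delta_N \downarrow 0$, $\mathbb{E}\bigl[\langle\sqrt{N}M^N(k)\rangle_{\tau_N+\delta_N} - \langle\sqrt{N}M^N(k)\rangle_{\tau_N}\bigr] = O(\delta_N) \to 0$, which is the hard part of the argument, since one must carefully invoke the uniform bounds obtained in the mean-field limit proof to conclude.

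Finally, continuity of every subsequential limit point follows because the jumps of $D^N$ coincide, up to the scaling factor $\sqrt{N}$, with the jumps of $Y^N$, which are of size $1/N$. Hence the maximum jump satisfies $\sup_{t\leq T}|D_t^N - D_{t^-}^N| \leq \sqrt{N}\cdot\frac{1}{N} = \frac{1}{\sqrt{N}} \to 0$ as $N\to\infty$. By a standard result in the Skorokhod topology (see, e.g., \citet{ko2016strong}), any weak limit point of a tight sequence whose maximum jumps vanish deterministically must take values in $\mathcal{C}([0,\infty),\mathbb{R}^{K+1})$, which completes the proof.
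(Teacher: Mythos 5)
Your proof is correct, and it diverges from the paper's argument in one substantive place: the treatment of the martingale term. The paper also verifies the two standard $J_1$ tightness conditions (compact containment via Lemma \ref{L2bound} and Chebyshev, and an oscillation bound term by term on the same decomposition you use), and its handling of the two integral terms is essentially identical to yours. But for $\sqrt{N}M^N_t$ the paper proves much more than tightness at this stage: it shows uniform convergence in probability of all the Doob--Meyer brackets $\langle\sqrt{N}M^N(k),\sqrt{N}M^N(j)\rangle_t$ to the limiting brackets $\langle M(k),M(j)\rangle_t$, checks that the maximal jump vanishes, and invokes the martingale FCLT (Theorem 1.4, Chapter 7 of \citet{Ethier2009}) to get weak convergence to the continuous Gaussian martingale $M$, from which tightness follows by Prohorov. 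You instead apply the Aldous--Rebolledo criterion, which needs only the uniform boundedness of the bracket density $\beta_+(Y^N)(k)+\beta_-(Y^N)(k)$ (available from the bounds $M/N\leq 2\gamma$, $NR_{\max}\geq 1/(2\Lambda)$, $r\geq\Lambda/C$ used in the mean-field proof) to get $\mathbb{E}\bigl[\langle\sqrt{N}M^N(k)\rangle_{\tau_N+\delta_N}-\langle\sqrt{N}M^N(k)\rangle_{\tau_N}\bigr]=O(\delta_N)$. Your route is lighter-weight and proves exactly what the lemma asserts; the paper's route front-loads the identification of the martingale limit, which it then reuses in the proof of Theorem \ref{difftheorem}, so neither is wasted effort in context. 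Two small points to tighten: (1) the blanket claim that ``a finite sum of tight processes is tight'' is false in the $J_1$ topology in general (jumps of different summands can interact); it is the $C$-tightness of each summand --- which you do establish, since the integral terms are equi-Lipschitz, the martingale term has jumps of order $1/\sqrt{N}$, and $D_0^N$ is constant in time --- that lets you add them. (2) Your jump bound should carry a factor $\sqrt{2}$ since each transition of $Y^N$ moves two coordinates by $1/N$; this of course changes nothing in the limit.
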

\begin{proof}
The proof is found in the Appendix. 

\end{proof}

\begin{proposition}\label{b'}

$b(y)$ is continuously differentiable with the derivatives 
$\frac{\partial b(y(r,i))}{\partial y(r,j)}$ as follows,
\begin{eqnarray}
\frac{\partial b(y(r,0))}{\partial y(r,j)}&=&j\cdot y(r,0)+\frac{\Lambda}{r}\mathbf{1}_{\{j=1\}}-\left(\gamma-\sum_{n=0}^{K}n\left(\sum_{r'}y(r',n)\right)\right)\mathbf{1}_{\{j=0\}},\nonumber
\\
\frac{\partial b(y(r,k))}{\partial y(r,j)}&=&j\cdot \left( y(k)-y(k-1)\right)+\frac{\Lambda}{r}\left(\mathbf{1}_{\{j=k+1\}}-\mathbf{1}_{\{j=k\}}\right)\nonumber \\
& &+\left(\gamma-\sum_{n=0}^{K}n\left(\sum_{r'}y(r',n)\right)\right)\left(\mathbf{1}_{\{j=k-1\}}-\mathbf{1}_{\{j=k\}}\right)\quad \text{for } 0<k<K,\nonumber
\\
\frac{\partial b(y(r,K))}{\partial y(r,j)}&=&-j\cdot y(K-1)-\frac{\Lambda}{r}\mathbf{1}_{\{j=K\}}+\left(\gamma-\sum_{n=0}^{K}n\left(\sum_{r'}y(r',n)\right)\right)\mathbf{1}_{\{j=K-1\}}.\nonumber\\
\end{eqnarray}
\end{proposition}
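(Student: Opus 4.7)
The plan is to differentiate each component $b(y)(k)$ term by term, after first simplifying the integrals using $\int_0^1 dy(r,n) = y(n)$ and $\sum_n \int_0^1 n\, dy(r,n) = \sum_{n=0}^K n\,y(n)$. With these substitutions, each $b(y)(k)$ decomposes into (i) a pure polynomial piece in the marginals of the form $(\gamma - \sum_n n\,y(n))(y(k-1) - y(k))$ (with appropriate modifications at the endpoints $k=0,K$) and (ii) linear combinations of integrals $\int_0^1 (\Lambda/r)\, dy(r,k)$.

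For the polynomial piece, the product rule yields directly
$$\frac{\partial}{\partial y(j)}\Bigl[\bigl(\gamma - \textstyle\sum_n n\,y(n)\bigr)\,y(m)\Bigr] = -j\,y(m) + \bigl(\gamma - \textstyle\sum_n n\,y(n)\bigr)\mathbf{1}_{\{j=m\}},$$
which, when applied to $m=k-1$ and $m=k$ and combined, produces the $j(y(k)-y(k-1))$ term and the $(\gamma - \sum_n n\,y(n))(\mathbf{1}_{j=k-1} - \mathbf{1}_{j=k})$ term in the stated formula. The boundary cases $k=0$ and $k=K$ are handled analogously, and these computations are essentially mechanical.

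The subtle step, and the only real obstacle, is the treatment of $\int_0^1 (\Lambda/r)\,dy(r,k)$, since this quantity depends on the full joint law of $(r,n)$ rather than on the marginals $y(n)$ alone. I would resolve this by invoking Assumption~\ref{assumption}: the conditional distribution of $r$ given $n=k$ — call it $\phi_k$ — is determined by the limiting measure $I(r,k)$ and is preserved under the model dynamics, so that a perturbation in $y(k)$ scales $dy(r,k)$ proportionally. Writing $dy(r,k) = y(k)\,d\phi_k(r)$ yields $\int_0^1 (\Lambda/r)\,dy(r,k) = y(k)\cdot c_k$ with $c_k := \int_0^1 (\Lambda/r)\, d\phi_k(r)$, and hence
$$\frac{\partial}{\partial y(j)}\int_0^1 (\Lambda/r)\,dy(r,k) \;=\; c_k\,\mathbf{1}_{\{j=k\}} \;=\; \frac{1}{y(j)}\int_0^1 (\Lambda/r)\,dy(r,k)\,\mathbf{1}_{\{j=k\}},$$
which is exactly the form appearing in the proposition.

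Assembling the two contributions for each of the three cases $k=0$, $0<k<K$, $k=K$ gives the stated expressions. Continuous differentiability of $b$ then follows because, after the proportionality reduction, each partial derivative is a polynomial in $y(0),\ldots,y(K)$ with fixed coefficients $c_k$; in particular the apparent singularity at $y(j)=0$ in the $\frac{1}{y(j)}$ factor is removable, since the integral in the numerator vanishes at the same rate.
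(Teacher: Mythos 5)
Your proposal is correct and follows essentially the same route as the paper: both reduce the polynomial part by the product rule and handle the integral term $\int_0^1 (\Lambda/r)\,dy(r,k)$ by arguing that the conditional law of $r$ given $n=k$ is a fixed measure (the paper deduces this from the independence of the utilization and bike-count components and Assumption~\ref{assumption}(i), writing $\int_0^1 \frac{\Lambda}{r}\frac{dy(r,k)}{y(k)} = \int_0^1 \frac{\Lambda}{r}\,dI(r,K)$), so that the integral is linear in $y(k)$ with a constant coefficient. Your additional remark that the apparent $1/y(j)$ singularity is removable is a nice clarification the paper leaves implicit.
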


\begin{proof}
The above equations can be obtained by directly taking derivatives to Equation (\ref{eqn:b}). We can see that the derivatives of $b(y)$ are linear in $y$. Thus we can conclude that $b(y)$ is continuously differentiable with respect to $y$.
\end{proof}

\begin{proof}[Proof of Theorem \ref{difftheorem}]
By Theorem 4.1 in Chapter 7 of \citet{Ethier2009},  it suffices to prove that the following condition holds
$$\sup_{t\leq T}\left|\int_{0}^{t}\left\{\sqrt{N}[b(Y_s^N)-b(y_s)]-b'(y_s)D_{s}^{N}\right\}ds\right|\xrightarrow{p} 0.$$
By Proposition~\ref{b'}, we know that $b(y_t)$ is continuously differentiable with respect to $y_t$. By the mean value theorem, for every $0\leq s\leq t$ there exists a vector $Z_{s}^{N}$ in between $Y_{s}^{N}$ and $y_s$ such that
$$b(Y_{s}^{N})-b(y_s)=b'(Z_{s}^{N})(Y_{s}^{N}-y_s).$$
Therefore
$$\int_{0}^{t}\left\{\sqrt{N}[b(Y_s^N)-b(y_s)]-b'(y_s)D_{s}^{N}\right\}ds=\int_{0}^{t}[b'(Z_{s}^{N})-b'(y_s)]D_{s}^{N}ds.$$
We know that $$\lim_{N\rightarrow \infty}\sup_{t\leq T}|b'(Z_{s}^{N})-b'(y_s)|=0\quad \text{in probability}$$
by the mean field limit convergence and the uniform continuity of $b'$.
By applying Chebyshev inequality we have   that $D_{s}^{N}$ is bounded in probability, then by Lemma 5.6 in \citet{ko2016strong},
$$\sup_{t\leq T}\left|\int_{0}^{t}\left\{\sqrt{N}[b(Y_s^N)-b(y_s)]-b'(y_s)D_{s}^{N}\right\}ds\right|\xrightarrow{p} 0.$$
\end{proof}

\begin{theorem}[Solution of the OU Process]\label{dt_solution}
The SDE (\ref{sde}) has a unique solution 
\begin{equation}\label{dt}
D_{t}=e^{\int_{0}^{t}b'(y_s)ds}D_0+\int_{0}^{t}e^{\int_{s}^{t}b'(y_u)du}dM_s.
\end{equation}

Define $\mathcal{A}(t)=b'(y_t)$, $\mathcal{B}(t)=\left(\frac{d}{dt}\boldlangle M(i),M(j)\boldrangle_{t}  \right)_{ij}$, then the expectation $E(D_t)$ is
\begin{equation}
\mathbb{E}[D_t]=e^{\int_{0}^{t}\mathcal{A}(s)ds} \mathbb{E}[D_0],
\end{equation}
 and the covariance matrix $\Sigma (t)=\mathrm{Cov}[D_{t},D_{t}]$ is 
 \begin{equation}
 \Sigma (t)=e^{\int_{0}^{t}\mathcal{A}(s)ds}\Sigma(0)e^{\int_{0}^{t}\mathcal{A^\top}(s)ds}+\int_{0}^{t}e^{\int_{s}^{t}\mathcal{A}(u)du}\mathcal{B}(s)e^{\int_{s}^{t}\mathcal{A^\top}(u)du}ds.
 \end{equation}
Moreover, differentiation with respect to $t$ yields
\begin{equation}
\frac{d\mathbb{E}[D_t]}{dt}=\mathcal{A}(t)\mathbb{E}[D_t],
\end{equation}

\begin{equation}\label{ode_var}
\frac{d\Sigma(t)}{dt}=\Sigma(t)\mathcal{A}(t)^\top+\mathcal{A}(t)\Sigma(t)+\mathcal{B}(t).
\end{equation}

\begin{proof}
The proof is found in the Appendix. 
\end{proof}
\end{theorem}

\section{Extensions} \label{Ext}
In Section \ref{Sec_Bike_Model}, we assumed WLOG that the routing probabilities and capacities are uniform throughout all stations. If one views our statement as assumptions, then it seems like our model is limiting, however, we emphasize that these are not assumptions and that our analysis extends to the broader case of non-uniform routing probabilities and capacities.  Thus, in the non-uniform setting, we are still able to prove the same fluid and diffusion limits and more importantly \emph{reduce the dimensionality of the bike sharing network.}  

\subsection{Extensions to non-uniform routing probabilities  and capacities}\label{Ext-routing}

The first possible extension of our current model is to incorporate non-uniform	routing probabilities, and take into account the origin-destination pairs. That is, after a customer picks up a bike from station $i$, the probability that he will drop off at station $j$ is equal to $P_{ij}$. However, to preserve the Markovian property of the queueing process, we also need to track the number of bikes in use which originated from station $i$, denoted as $U^N_t(i)$, in addition to the empirical measure process $Y^N_t$. As the scale of the system $N$ goes to infinity, the dimensionality of the new queueing process $(Y^N_t, U^N_t)$ will also go to infinity, therefore we are back to the same infinite-dimensionality problem with $X^N(t)$, which is illustrated more in details in Section \ref{intractability}, and lose the benefit of finite-dimensionality we get by studying the empirical measure $Y^N_t$.

However, we can still extend our model to non-uniform routing probabilities and capacities without considering the origin-destination pairs. In particular, we assume the probability that a bike being dropped off at station $i$ is equals to $P_{i}$, and the capacity at station $i$ is equal to $K_i$. 

We define relative routing probability at station $i$ as 
\begin{equation}
p_i=\frac{P_i}{\max_{i}P_i}.
\end{equation}

We consider the empirical measure process $Y^N_{t}(r,n,p,k)$ that counts the proportion of stations that have $n$ bikes, relative utilization $r$, routing probability $p$, and capacity $k$,
\begin{equation}
Y^N_{t}(r,n,p,k)=\frac{1}{N}\sum_{i}^{N}\mathbf{1}\{X_i^{N}(t)=n, r_i^{N}=r, p_i^{N}=p,K_i^{N}=k\}.
\end{equation}

Conditioning on $Y_{t}^{N}(r,n,p,k)=y(r,n,p,k)$, the transition rates of $y$ are specified as follows:\\
When a customer arrives to a station with $n$ bikes,  relative utilization $r$, relative routing probability $p$, and capacity $k$, to retrieve a bike, the
proportion of stations having $n$ bikes goes down by $1/N$, the proportion of stations having $n-1$ bikes goes up by $1/N$, and the transition rate $Q^N$ is 
\begin{eqnarray}
Q^{N}\left(y,y+\frac{1}{N}(\mathbf{1}_{(r,n-1,p,k)}-\mathbf{1}_{(r,n,p,k)}) \right) &=& 
y(r,n,p,k)\lambda_r N \mathbf{1}_{n>0}\nonumber \\ 
&=& y(r,n,p,k)\frac{\mu P}{R}N \mathbf{1}_{n>0}\nonumber \\
&=& y(r,n,p,k)\frac{pP^{N}_{\max}}{rR^N_{\max}}N \mathbf{1}_{n>0}.
\end{eqnarray}
When a customer returns a bike to a station with $n$ bikes ,  relative utilization $r$, relative routing probability $p$, and capacity $k$, to retrieve a bike,, the proportion of stations having $n$ bikes goes down by $1/N$, the proportion of stations having $n+1$ bikes goes up by $1/N$, and the transition rate $Q^N$ is 
\begin{eqnarray}
Q^{N} \left(y,y+\frac{1}{N}(\mathbf{1}_{(r,n+1,p,k)}-\mathbf{1}_{(r,n,p,k)}) \right) &=&
 y(r,n,p,k) N\cdot \mu \cdot P\left(M-\sum_{n'}\sum_{r',p',k'}n'y(r',n',p',k')N\right)\mathbf{1}_{n<k}\nonumber  \\  &=&
 y(r,n,p,k)N^2pP^{N}_{\max}\left(\frac{M}{N}-\sum_{n'}\sum_{r',p',k'}n'y(r',n',p',k')\right)\mathbf{1}_{n<k}.\nonumber\\
\end{eqnarray}

We have the following functional forward equations for $Y_{t}^{N}(r,n,p,k)$.

\begin{proposition}\label{functional_forward_Yex_routing}
 For any integrable function $f: [0,1]^{k+1}\rightarrow \mathbb{R}$, and \\$Y_{t}^N(r,p,k)=(Y_{t}^{N}(r,0,p,k),\cdots,Y_{t}^{N}(r,k,p,k))$ satisfies the following functional forward equation,
 \begin{eqnarray}
 & &\updot{\mathbb{E}}(f(Y_{t}^{N}(r,p,k)) | Y_{0}^{N}(r,p,k) = y_{0}(r,p,k)]\nonumber \\
 &=&  \sum_{n=0}^{k}\mathbb{E}\left[\left(f\left(Y_t^N(r,p,k)+\frac{1}{N}(\mathbf{1}_{r,n-1,p,k}-\mathbf{1}_{r,n,p,k})\right)-f(Y_t^N(r,p,k))\right)Y^{N}_{t}(r,n,p,k)\frac{NpP^{N}_{\max}}{rR^{N}_{\max}}\mathbf{1}_{n>0}\right]\nonumber \\
 &+&\sum_{n=0}^{k}\mathbb{E}\left[\left(f\left(Y_t^N(r,p,k)+\frac{1}{N}(\mathbf{1}_{r,n+1,p,k}-\mathbf{1}_{r,n,p,k})\right)-f(Y_t^N(r,p,k))\right)Y^{N}_{t}(r,n,p,k)N^2pP^{N}_{\max}\left(\frac{M}{N}\right.\right.\nonumber\\
 &-&\left.\left.\sum_{n'}\sum_{r',p',k'}n'Y_{t}^{N}(r',n',p',k')\right)\mathbf{1}_{n<k}\right]\nonumber\\
 \end{eqnarray}
 \end{proposition}

\subsubsection{Mean Field Limit}
We first state the following assumptions that we use throughout this section,
\begin{assumption}\label{assumption_ex}
 There exists a probability measure $I(r,p,k)$  on $[0,1]\times [0,1] \times	 \mathbb{N}$ and $\Lambda > 0$ such that, as $N$ tends to infinity, we have
\begin{itemize}
\item[i)] $\frac{1}{N}\sum_{i=1}^{N}\mathbf{1} _{(r_i^{N},p_i^{N},K_i^N)}\Rightarrow I(r,p,k)$,
\item[ii)]$P^{N}_{\max}/R_{\max}^{N}\rightarrow \Lambda$, $NP^{N}_{\max}\rightarrow \mathcal{P}$, $\frac{M}{N}\rightarrow \gamma$,
\item[iii)]The set $\mathcal{K}=\bigcup_{N=1}^{\infty}\{K_{i}^{N},i=1,...,N\}$ is finite.
\end{itemize}
\end{assumption}

Now we state the main theorem in this section that proves the convergence of empirical process to its mean field limit.
\begin{theorem}\label{fluid_limit_ex}
Let $|\cdot |$ denote the Euclidean norm in $\mathbb{R}^{K_{\max}+1}$. Under Assumption~\ref{assumption_ex}, suppose that $Y_{0}^{N}\xrightarrow{p} y_0$,  then we have for any $\epsilon>0$ and $t_0>0$,
$$\lim_{N\rightarrow \infty}\mathbb{P}\left(\sup_{t\leq t_0}|Y_t^N-y_t|>\epsilon\right)=0.$$
Here $y_t=(y_t(0),\cdots,y_t(K_{\max}))$, where $y_{t}(j)=\sum_{k\in \mathcal{K}}\int_{0}^{1}\int _{0}^{1}dy_{t}(r,j,p,k)$ for $j=0,\cdots,K_{\max}$. And $y_t$ is the unique solution to the following differential equation starting at $y_0$
\begin{equation}\label{fluid_eqn_ex}
\shortdot{y}_{t}=b(y_{t})
\end{equation}

where $b:[0,1]^{K+1}\rightarrow \mathbb{R}^{K_{\max}+1}$ is a vector field satisfies
\begin{eqnarray}\label{eqn:b_ex}
b(y_{t})&=&\sum_{k\in \mathcal{K}}\sum_{n=0}^{k}\iint\limits_{]0,1]\times [0,1]}\left[ \frac{p\Lambda}{r}(\mathbf{1}_{(r,n-1,p,k)}-\mathbf{1}_{(r,n,p,k)})\mathbf{1}_{n>0}+\right.\nonumber\\
& &\left.p\mathcal{P}\left(\gamma-\sum_{n}\sum_{k\geq n}\iint_{]0,1]\times[0,1]} ndy_{t}(r,n,p,k) \right)(\mathbf{1}_{(r,n+1,p,k)}-\mathbf{1}_{(r,n,p,k)})\mathbf{1}_{n<k}\right] dy_{t}(r,n,p,k),\nonumber \\
\end{eqnarray}
or componentwise

\begin{eqnarray}
b(y_{t})(0)&=&\underbrace{-\sum_{k\in \mathcal{K}}\iint\limits_{]0,1]\times [0,1]}p\mathcal{P}\left(\gamma-\sum_{n}\sum_{k\in \mathcal{K}}\iint\limits_{]0,1]\times [0,1]} ndy_{t}(r,n,p,k) \right)dy_{t}(r,0,p,k)}_{\text{return a bike to a no-bike station}}\nonumber\\
& &+\underbrace{\sum_{k\in \mathcal{K}}\iint\limits_{]0,1]\times [0,1]}\frac{p\Lambda}{r}dy_{t}(r,1,p,k)}_{\text{retrieve a bike from a 1-bike station}},
\end{eqnarray}

\begin{equation}
\begin{split}
b(y_{t})(j)=& \underbrace{\sum_{k\in \mathcal{K}}\iint\limits_{]0,1]\times [0,1]}\frac{p\Lambda}{r}dy_{t}(r,j+1,p,k)}_{\text{retrieve a bike from a $j+1$-bike station}}\\
& -\underbrace{\sum_{k\in \mathcal{K}}\iint\limits_{]0,1]\times [0,1]}\left(\frac{p\Lambda}{r}+p\mathcal{P}\left(\gamma-\sum_{n}\sum_{k\in \mathcal{K}}\iint\limits_{]0,1]\times [0,1]} ndy_{t}(r,n,p,k) \right)\right)dy_{t}(r,j,p,k)}_{\text{retrieve and return a bike to a $j$-bike station}}\\
&+\underbrace{\sum_{k\in \mathcal{K}}\iint\limits_{]0,1]\times [0,1]}p\mathcal{P}\left(\gamma-\sum_{n}\sum_{k\in \mathcal{K}}\iint\limits_{]0,1]\times [0,1]} ndy_{t}(r,n,p,k) \right)dy_{t}(r,j-1,p,k)}_{\text{return a bike to a $j-1$-bike station}},
\end{split}
\end{equation}

for $ j=1,...,K_{\max}-1$, and 
\begin{eqnarray}
b(y_{t})(K_{\max})&=&\underbrace{-\sum_{k\in \mathcal{K}}\iint\limits_{]0,1]\times [0,1]}\frac{p\Lambda}{r} dy_{t}(r,K_{\max},p,k)}_{\text{retrieve a bike from a $K_{\max}$-bike station}}\nonumber\\
& &+\underbrace{\sum_{k\in \mathcal{K}}\iint\limits_{]0,1]\times [0,1]}p\mathcal{P}\left(\gamma-\sum_{n}\sum_{k\in \mathcal{K}}\iint\limits_{]0,1]\times [0,1]} ndy_{t}(r,n,p,k) \right)dy_{t}(r,K_{\max}-1,p,k)}_{\text{return a bike to a $K_{\max}-1$-bike station}}.\nonumber\\
\end{eqnarray}
\end{theorem}
\begin{proof}
The only things we need to prove in this extension case are that $b(y)$ is Lipschitz and that $\beta(Y_{t}^{N})\xrightarrow{p} b(Y_{t}^{N})$ for any $t\geq 0$. The rest of the proof stays the same as in Section \ref{Fluid_Limit}.
\end{proof}

\begin{proposition}[Asymptotic Drift is Lipschitz]\label{Lipschitz_ex}
The drift function $b(y)$ given in Equation (\ref{eqn:b_ex}) is a Lipschitz function with respect to the Euclidean norm in $\mathbb{R}^{K_{\max}+1}$. 
\begin{proof}
The proof is found in the Appendix.  
\end{proof}
\end{proposition}

\begin{proposition}[Drift is Asymptotically Close to a Lipschitz Drift]\label{driftbound_ex}
Under Assumption~\ref{assumption_ex}, we have for any $\epsilon>0$ and $s\geq 0$,
$$\lim_{N\rightarrow \infty}\mathbb{P}(|\beta(Y_s^{N})-b(Y_s^{N})|>\epsilon)= 0.$$
\begin{proof}
The proof is found in the Appendix.  
\end{proof}
\end{proposition}
\subsubsection{Diffusion Limit}
Now we state the functional central limit theorem for the empirical measure process in the extension case as follows,
\begin{theorem}\label{difftheorem_ex}
Consider $D_{t}^{N}$ in $\mathbb{D}(\mathbb{R}_{+},\mathbb{R}^{K_{\max}+1})$ with the Skorokhod $J_{1}$ topology, and suppose that 
\begin{itemize}
\item[1)]
$\limsup_{N\rightarrow \infty}\sqrt{N}\left(\frac{P_{\max}^{N}}{R_{\max}^{N}}-\Lambda\right)<\infty,$
\item[2)]
$\limsup_{N\rightarrow \infty}\sqrt{N}\left(\frac{M}{N}-\gamma\right)< \infty,$
\item[3)] $\limsup_{N\rightarrow \infty}\sqrt{N}\left(NP_{\max}^{N}-\mathcal{P}\right)< \infty.$
\end{itemize}
Then if $D_{0}^{N}$ converges in distribution to $D_{0}$, then $D_{t}^{N}$ converges to the unique OU process solving $D_{t}=D_{0}+\int_{0}^{t}b'(y_s)D_s ds+M_t$ in distribution.
Here $b'(y)=\left(\frac{\partial b(y)(i)}{\partial y(j)}\right)_{ij}\in \mathbb{R}^{(K_{\max}+1)\times (K_{\max}+1)}$ and $M_{t}=(M_{t}(0),\cdots,M_{t}(K_{\max}))\in \mathbb{R}^{K_{\max}+1}$ is a real continuous centered Gaussian martingale, with Doob-Meyer brackets given by
\begin{eqnarray}
\boldlangle M(k) \boldrangle_{t}&=&\int_{0}^{t}(b_{+}(y_{s})(k)+b_{-}(y_{s})(k))ds,\nonumber\\
\boldlangle M(k),M(k+1) \boldrangle_{t}&=&-\int_{0}^{t}\left[\sum_{K\in \mathcal{K}}\iint_{]0,1]\times [0,1]}\frac{p\Lambda}{r}dy_{s}(r,k+1,p,K)\right. \nonumber\\
& &+ \left.\sum_{K\in \mathcal{K}}\iint_{]0,1]\times [0,1]}p\mathcal{P}\left(\gamma-\sum_{n}\sum_{K\in \mathcal{K}}\iint_{]0,1]\times [0,1]}ndy_{s}(r,n,p,K)\right) dy_{s}(r,k,p,K)\right]ds \nonumber\\
& &\quad \text{for } k<K_{\max},\nonumber \\
\boldlangle M(k),M(j) \boldrangle_{t}&=&0\quad \text{for } |k-j|>1.
\end{eqnarray}
Here $b_{+}(y)=\max(b(y),0)$ and $b_{-}(y)=-\min(b(y),0)$ denote the positive and the negative parts of function $b(y)$ respectively.
\end{theorem}
\begin{proof}
The only things we need to prove in this extension case are listed as propositions below. The rest of the proof stays the same as in Section \ref{Diffusion_Limit}.
\end{proof}

\begin{lemma}\label{martingale_ex}
$\sqrt{N}M_{t}^{N}$ is a family of martingales independent of $D_{0}^{N}$ with Doob-Meyer brackets given by
\begin{eqnarray}
\boldlangle M(k) \boldrangle_{t}&=&\int_{0}^{t}(b_{+}(y_{s})(k)+b_{-}(y_{s})(k))ds,\nonumber\\
\boldlangle M(k),M(k+1) \boldrangle_{t}&=&-\int_{0}^{t}\left[\sum_{K\in \mathcal{K}}\iint_{]0,1]\times [0,1]}\frac{p\Lambda}{r}dy_{s}(r,k+1,p,K)\right. \nonumber\\
& &+ \left.\sum_{K\in \mathcal{K}}\iint_{]0,1]\times [0,1]}p\mathcal{P}\left(\gamma-\sum_{n}\sum_{K\in \mathcal{K}}\iint_{]0,1]\times [0,1]}ndy_{s}(r,n,p,K)\right) dy_{s}(r,k,p,K)\right]ds \nonumber\\
& &\quad \text{for } k<K_{\max},\nonumber \\
\boldlangle M(k),M(j) \boldrangle_{t}&=&0\quad \text{for } |k-j|>1.
\end{eqnarray}
\end{lemma}

\begin{proposition}\label{bbound_ex}
 For any $s\geq 0$, 
\begin{equation}
\limsup_{N\rightarrow \infty}\sqrt{N}|\beta(Y_{s}^{N}-b(Y_s^{N})|< \infty
\end{equation}
\begin{proof}
The proof is found in the Appendix.  
\end{proof}
\end{proposition}

\begin{proposition}\label{adjacent_ex}
For $k<K_{\max}$
\begin{equation}
\lim_{N\rightarrow \infty}\mathbb{P}\left(\sup_{t\leq T}\left|\boldlangle \sqrt{N}M^N(k),\sqrt{N}M^N(k+1)\boldrangle_{t}- \boldlangle M(k),M(k+1)\boldrangle_{t}\right|>\epsilon\right)=0
\end{equation}
\begin{proof}
The proof is found in the Appendix.  
\end{proof}
\end{proposition}

\begin{proposition}\label{b'_ex}

$b(y)$ is continously differentiable with the derivatives 
$\frac{\partial b(y(r,i,p,K))}{\partial y(r,j,p,K)}$ as follows,
\begin{eqnarray}
\frac{\partial b(y(r,0,p,K))}{\partial y(r,j,p,K)}&=&p\mathcal{P}j\cdot y(r,0,p,K)+\frac{p\Lambda}{r}\mathbf{1}_{\{j=1\}}-p\mathcal{P}\left(\gamma-\sum_{n=0}^{K_{\max}}n\left(\sum_{r',p',K'}y(r',n,p',K')\right)\right)\mathbf{1}_{\{j=0\}},\nonumber
\\
\frac{\partial b(y(r,k,p,K))}{\partial y(r,j,p,K)}&=&p\mathcal{P} j\cdot \left( y(r,k,p,K)-y(r,k-1,p,K)\right)+\frac{p\Lambda}{r}\left(\mathbf{1}_{\{j=k+1\}}-\mathbf{1}_{\{j=k\}}\right)\nonumber \\
& &+p\mathcal{P}\left(\gamma-\sum_{n=0}^{K_{\max}}n\left(\sum_{r',p',K'}y(r',n,p',K')\right)\right)\left(\mathbf{1}_{\{j=k-1\}}-\mathbf{1}_{\{j=k\}}\right)\quad \text{for } 0<k<K_{\max},\nonumber
\\
\frac{\partial b(y(r,K,p,K))}{\partial y(r,j,p,K)}&=&-p\mathcal{P} j\cdot y(K_{\max}-1)-\frac{p\Lambda}{r}\mathbf{1}_{\{j=K_{\max}\}}\nonumber\\
& &+p\mathcal{P}\left(\gamma-\sum_{n=0}^{K_{\max}}n\left(\sum_{r',p',K'}y(r',n,p',K')\right)\right)\mathbf{1}_{\{j=K_{\max}-1\}}.\nonumber\\
\end{eqnarray}
\end{proposition}

\begin{proof}
The above equations can be obtained by directly taking derivatives to Equation (\ref{eqn:b_ex}). We can see that the derivatives of $b(y)$ are linear in $y$. Thus we can conclude that $b(y)$ is continuously differentiable with respect to $y$.
\end{proof}

\subsection{Extensions to adding bike repositioning}\label{Ext-repositioning}
We can also extend our model to incorporate repositioning of bikes, which is adopted by most bike-sharing companies to rebalance the network over time. To avoid cumbersome notations, we restrict ourselves to the uniform routing probabilities and capacities scenario for the model discussed in this section.

We may assume that "rebalancers" will arrive to the system according to a independent Poisson process with rate $\lambda_{R}Y^N_t(K)N$, where $\lambda_R>0$ is a constant and $Y^N_t(K)$ is the proportion of full stations at time $t$. They pick a full station uniformly at random among all full stations, and remove a certain number $C$ of the bikes from that station. 

In terms of repositioning these bikes, we assume that $C$ number of bikes are being put back to stations with bikes ranging from $0$ to $K-C$. The probability of repositioning to a station with $j$ bikes is equal to $\frac{Y^N_t(j)g_j}{\sum_{i=0}^{K-C}Y^N_t(i)g_i}$, where $\{g_i\}_i$ are positive constants and $g_0>>g_1>>g_2\cdots>>g_{K-C}>0$. This is a smoothed version of choosing the station with minimum number of bikes when repositioniong, and it preserves the Lipschitz property of the drift function, which is essential for deriving the fluid limit and the diffusion limit. We assume additionally that $M<N(K-C)$ to avoid the case where the denominator $\sum_{i=0}^{K-C}Y^N_t(i)g_i$ can be 0. This can be easily shown through a proof by contradiction. For simplicity of the model, we assume repositioning time are exponentially distributed with rate $\mu_R$. Finally, we use $R^N_t$ to denote the number of bikes currently in repositioning by the rebalancers at time $t$.

Conditioning on $(Y_{t}^{N}(r,n),R^N_t)=(y(r,n),R)$, , we have the following transition rate for the four types of events:

When a customer arrives to a station with $n$ bikes,  relative utilization $r$, to retrieve a bikes, the proportion of stations having $n$ bikes goes down by $1/N$, the proportion of stations having $n-1$ bikes goes up by $1/N$, and the transition rate $Q^N$ is 
\begin{eqnarray}
Q^{N}\left((y,R),\left(y+\frac{1}{N}(\mathbf{1}_{(r,n-1)}-\mathbf{1}_{(r,n)}),R\right) \right) &=& 
\frac{y(r,n)}{rR_{\max}^N}\mathbf{1}_{n>0}.
\end{eqnarray}

When a customer returns a bike to a station with $n$ bikes and relative utilization $r$, the proportion of stations having $n$ bikes goes down by $1/N$, the proportion of stations having $n+1$ bikes goes up by $1/N$, and the transition rate $Q^N$ is 
\begin{eqnarray}
& &Q^{N}\left((y,R),\left(y+\frac{1}{N}(\mathbf{1}_{(r,n+1)}-\mathbf{1}_{(r,n)}),R\right) \right)= y(r,n) \cdot \mu \cdot \left(M-\sum_{n'}\sum_{r'}n'y(r',n')N-R\right)\mathbf{1}_{n<K}.\nonumber\\
\end{eqnarray}

When a rebalancer arrives to a station with $K$ bikes,  relative utilization $r$, to retrieve $C$ bikes, the proportion of stations having $K$ bikes goes down by $1/N$, the proportion of stations having $K-C$ bikes goes up by $1/N$, and the transition rate $Q^N$ is 
\begin{eqnarray}
Q^{N}\left((y,R),\left(y+\frac{1}{N}(\mathbf{1}_{(r,K-C)}-\mathbf{1}_{(r,K)}),R+C\right) \right) &=& 
y(r,K)\lambda_R N.
\end{eqnarray}

When a rebalancer arrives to a station with $j$ bikes and relative utilization $r$, where $0\leq j\leq K-C$, to put back $C$ bikes, the proportion of stations having $j$ bikes goes down by $1/N$, the proportion of stations having $j+C$ bikes goes up by $1/N$, and the transition rate $Q^N$ is 
\begin{eqnarray}
& &Q^{N}\left((y,R),\left(y+\frac{1}{N}(\mathbf{1}_{(r,j+C)}-\mathbf{1}_{(r,j)}),R-C\right) \right)=\mu_R \frac{y(r,j)g_j}{\sum_{i=0}^{K-C}y(r,i)g_i}\cdot \frac{R}{C}.
\end{eqnarray}

We have the following functional forward equations for $(Y_{t}^{N}(r,n),R)$.

\begin{proposition}\label{functional_forward_Yex_repositioning}
 For any integrable function $f: [0,1]^{K+1}\times [0,\infty)\rightarrow \mathbb{R}$, \\$Y_{t}^N(r)=(Y_{t}^{N}(r,0),\cdots,Y_{t}^{N}(r,K))$ satisfies the following functional forward equation,
 \begin{eqnarray}
 & &\updot{\mathbb{E}}(f(Y_{t}^{N}(r),R) | Y_{0}^{N}(r) = y_{0}(r), R_0^N=R_0]\nonumber \\
 &=&  \sum_{n=0}^{K}\mathbb{E}\left[\left(f\left(Y_t^N(r)+\frac{1}{N}(\mathbf{1}_{r,n-1}-\mathbf{1}_{r,n}),R_t^N\right)-f(Y_t^N(r),R_t^N)\right)\frac{Y^{N}_{t}(r,n)}{rR^{N}_{\max}}\mathbf{1}_{n>0}\right]\nonumber \\
 &+&\sum_{n=0}^{K}\mathbb{E}\left[\left(f\left(Y_t^N(r)+\frac{1}{N}(\mathbf{1}_{r,n+1}-\mathbf{1}_{r,n}),R_t^N\right)-f(Y_t^N(r)),R_t^N\right)Y^{N}_{t}(r,n)N\left(\frac{M}{N}\right.\right.\nonumber\\
 &-&\left.\left.\sum_{n'}\sum_{r'}n'Y_{t}^{N}(r',n')-\frac{R^N_t}{N}\right)\mathbf{1}_{n<K}\right]\nonumber\\
 &+&  \mathbb{E}\left[\left(f\left(Y_t^N(r)+\frac{1}{N}(\mathbf{1}_{r,K-C}-\mathbf{1}_{r,K}),R^N_t+C\right)-f(Y_t^N(r),R^N_t)\right)Y^{N}_{t}(r,K)\lambda_R N\right]\nonumber \\
 &+& \sum_{n=0}^{K-C}\mathbb{E}\left[\left(f\left(Y_t^N(r)+\frac{1}{N}(\mathbf{1}_{r,n+C}-\mathbf{1}_{r,n}),R^N_t-C\right)-f(Y_t^N(r),R^N_t)\right)\mu_R \frac{Y^N_t(r,n)g_n}{\sum_{i=0}^{K-C}Y^N_t(r,i)g_i}\cdot\frac{R^N_t}{C}\right].\nonumber\\
 \end{eqnarray}
 \end{proposition}

\subsubsection{Mean Field Limit}
Now we show the mean field limit result for the empirical process with rebalancing.
\begin{theorem}\label{fluid_limit}
Let $|\cdot|$ denote the Euclidean norm in $\mathbb{R}^{K+2}$. Under Assumption~\ref{assumption}, suppose that $(Y_{0}^{N},R_0^N/N)\xrightarrow{p} (y_0,\bar{r}_0)$ ,  then we have for any $\epsilon>0$ and $t_0>0$,
$$\lim_{N\rightarrow \infty}P\left(\sup_{t\leq t_0}|(Y_t^N,R^N_t/N)-(y_t,\bar{r}_t)|>\epsilon\right)=0.$$
Here $y_t=(y_t(0),\cdots,y_t(K))$, where $y_{t}(k)=\int_{0}^{1}dy_{t}(r,k)$ for $k=0,\cdots,K$. And $(y_t,\bar{r}_t)$ is the unique solution to the following differential equation starting at $(y_0,\bar{r}_0)$
\begin{eqnarray}\label{diff_eqn}
\shortdot{y}_{t}&=&b_1(y_{t},\bar{r}_t),\\
\shortdot{\bar{r}}_{t}&=&b_2(y_{t},\bar{r}_t),
\end{eqnarray}
where $b_1:[0,1]^{K+1}\rightarrow \mathbb{R}^{K+1}$ and $b_2:[0,\infty)\rightarrow \mathbb{R}$ are vector fields satisfy
\begin{eqnarray}\label{eqn:b}
& & b_1(y_{t},\bar{r}_t)\nonumber\\
&=&\iint\limits_{]0,1]\times [0,...,K]}\left[ \frac{\Lambda}{r}(\mathbf{1}_{(r,n-1)}-\mathbf{1}_{(r,n)})\mathbf{1}_{n>0}+\left(\gamma-\sum_{n}\int_{0}^{1} ndy_{t}(r,n)-\bar{r}_t \right)(\mathbf{1}_{(r,n+1)}-\mathbf{1}_{(r,n)})\mathbf{1}_{n<K}\right] dy_{t}(r,n)\nonumber \\
&+& \int\limits_{]0,1]}\lambda_R \mathbf{1}_{(r,K-C}-\mathbf{1}_{(r,K)})dy_{t}(r,K)+\iint\limits_{]0,1]\times [0,...,K-C]}\mu_R \frac{g_n}{\sum_{i=0}^{K-C}y_t(r,i)g_i}\cdot \frac{\bar{r}_t}{C} (\mathbf{1}_{(r,n+C)}-\mathbf{1}_{(r,n)}) dy_{t}(r,n),\nonumber \\
\end{eqnarray}
or componentwise
\begin{eqnarray}
b_1(y_{t},\bar{r}_t)(0)&=&\underbrace{-\int_{0}^{1}\left(\gamma-\sum_{n}\int_{0}^{1} ndy_{t}(r,n) -\bar{r}_t \right)dy_{t}(r,0)}_{\text{return a bike to a no-bike station}}+\underbrace{\int_{0}^{1}\frac{\Lambda}{r}dy_{t}(r,1)}_{\text{retrieve a bike from a 1-bike station}}\nonumber\\
& &\underbrace{- \int_{0}^{1}\mu_R \frac{g_0}{\sum_{i=0}^{K-C}y_t(r,i)g_i}\cdot \frac{\bar{r}_t}{C}dy_t(r,0)}_{\text{rebalancer put back C bikes to a 0-bike station}},
\end{eqnarray}
\begin{equation*}
\begin{split}
b_1(y_{t},\bar{r}_t)(k)=& \underbrace{\int_{0}^{1}\frac{\Lambda}{r}dy_{t}(r,k+1)}_{\text{retrieve a bike from a $k+1$-bike station}}-\underbrace{\int_{0}^{1}\left(\frac{\Lambda}{r}+\gamma-\sum_{n}\int_{0}^{1} ndy_{t}(r,n) -\bar{r}_t \right)dy_{t}(r,k)}_{\text{retrieve and return a bike to a $k$-bike station}}\\
&+\underbrace{\int_{0}^{1}\left(\gamma-\sum_{n}\int_{0}^{1} ndy_{t}(r,n) -\bar{r}_t \right)dy_{t}(r,k-1)}_{\text{return a bike to a $k-1$-bike station}}-\underbrace{\int_{0}^{1}\mu_R \frac{g_k}{\sum_{i=0}^{K-C}y_t(r,i)g_i}\cdot \frac{\bar{r}_t}{C}dy_t(r,k)}_{\text{rebalancer put back C bikes to a k-bike station}},
\end{split}
\end{equation*}
for $ k=1,...,K-C-1$, and 
\begin{equation*}
\begin{split}
b_1(y_{t},\bar{r}_t)(k)=& \underbrace{\int_{0}^{1}\frac{\Lambda}{r}dy_{t}(r,k+1)}_{\text{retrieve a bike from a $k+1$-bike station}}-\underbrace{\int_{0}^{1}\left(\frac{\Lambda}{r}+\gamma-\sum_{n}\int_{0}^{1} ndy_{t}(r,n) -\bar{r}_t \right)dy_{t}(r,k)}_{\text{retrieve and return a bike to a $k$-bike station}}\\
&+\underbrace{\int_{0}^{1}\left(\gamma-\sum_{n}\int_{0}^{1} ndy_{t}(r,n) -\bar{r}_t \right)dy_{t}(r,k-1)}_{\text{return a bike to a $k-1$-bike station}}+\underbrace{\int_{0}^{1}\mu_R \frac{g_{k-C}}{\sum_{i=0}^{K-C}y_t(r,i)g_i}\cdot \frac{\bar{r}_t}{C}dy_t(r,k-C)}_{\text{rebalancer put back C bikes to a k-C-bike station}}\\
& +\underbrace{\int_0^1 C\lambda_R dy_t(r,K)\mathbf{1}_{k=K-C}}_{\text{rebalancers retrieve C bikes from a $K$-bike station}},
\end{split}
\end{equation*}
for $ k=K-C,...,K-1$, and 
\begin{eqnarray}
b_1(y_{t},\bar{r}_t)(K)&=&\underbrace{-\int_{0}^{1}\frac{\Lambda}{r} dy_{t}(r,K)}_{\text{retrieve a bike from a $K$-bike station}}+\underbrace{\int_{0}^{1}\left(\gamma-\sum_{n}\int_{0}^{1} ndy_{t}(r,n) -\bar{r}_t \right)dy_{t}(r,K-1)}_{\text{return a bike to a $K-1$-bike station}}\nonumber\\
& &-\underbrace{\int_0^1 C\lambda_R dy_t(r,K)\mathbf{1}_{k=K-C}}_{\text{rebalancers retrieve C bikes from a $K$-bike station}},
\end{eqnarray}
and
\begin{eqnarray}
b_2(y_t,\bar{r}_t)=\underbrace{\int_0^1 C\lambda_R dy_t(r,K)}_{\text{rebalancers retrieve C bikes from a $K$-bike station}}-\sum_{n=0}^{K-C}\underbrace{\int_0^1\mu_R \frac{g_n}{\sum_{i=0}^{K-C}y_t(r,i)g_i}\bar{r}_t dy_t(r,n)}_{\text{rebalancers put back C bikes to a $n$-bike station}}.\nonumber\\
\end{eqnarray}
\end{theorem}
\begin{proof}
This result can be shown by similar techniques used in Section \ref{Fluid_Limit}.
\end{proof}
\section{Numerical Examples and Simulation}\label{simulation}
In this section,  we confirm our theoretical results of our bike sharing model with a stochastic simulation.   We perform simulations with both stationary  and non-stationary arrival rates, which are discussed substantially in the following subsections.

Figures~\ref{Fig_Avg_Trips} and~\ref{Fig_Avg_Duration} provide the user patterns of BSS from the historical data of Citi Bike. In Figure~\ref{Fig_Avg_Trips}, we can see that the arrival  rate of users to CitiBike stations varies not only on the time of the day, but also on the day of the week. So in the subsequent numerical examples, we consider both stationary and non-stationary arrival rates to provide insights for the behavior of such systems under different demand and usage conditions.  However, unlike the arrival rate, in Figure~\ref{Fig_Avg_Duration}, we observe that the mean travel duration of Citi Bike users does not vary significantly as a function of the time of day or the day of the week.  Therefore, in the subsequent numerical examples we assume a constant mean travel time $1/\mu$.

\begin{figure}[htbp]
\centering
\includegraphics[width=0.9\textwidth]{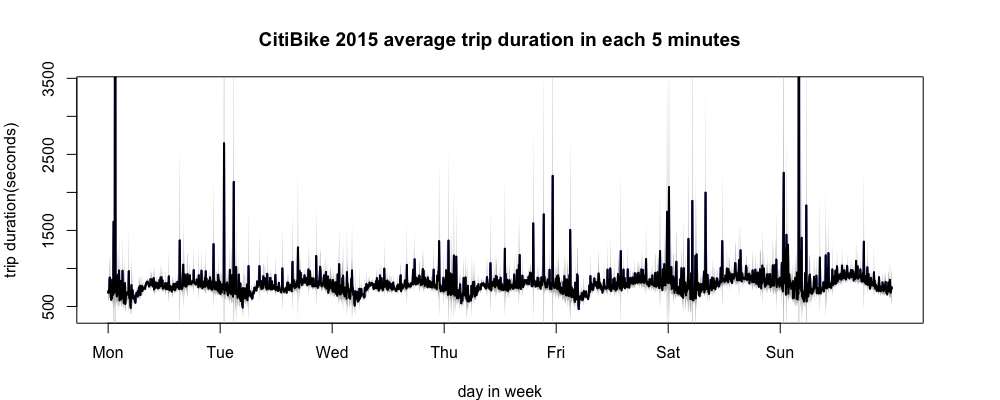}
\caption{Citi Bike average trip duration in each 5 minutes (Jan 1st-Dec 31st, 2015)}\label{Fig_Avg_Duration}
\end{figure}

\subsection{Simulation Experiments with a Non-stationary Arrival Rate}
In this section, we provide the results of the simulation studies of our stochastic bike sharing model with a non-stationary arrival process.  We perform our simulation with the following parameters: $N=100$, $\lambda(t)=1+0.5\sin(t/2)$, $\mu=1$. The number of sample paths we use in our simulation is 50. Other parameters are specified in the illustration of each figure below. For the time-varying arrival rate, we use a periodic function $\lambda(t)=1+0.5\sin(t/2)$ to mimic the patterns of morning rush and the afternoon-evening rush that we observe from the Citi Bike data (See Figure~\ref{Fig_Avg_Trips}).
\paragraph{Figure \ref{Fig_ykt}:}

 In Figure~\ref{Fig_ykt}, we simulate the different components of $Y_{t}^{N}$ and their subsequent mean field limits $y_{t}$ when $K=10$. Each station has 5 bikes at the beginning of the simulation. The solid lines represent the simulation results of different components of $Y_{t}^{N}$, and the dashed lines of the same color represent the corresponding mean field limit $y_{t}$. We also add the purple dashed line to represent the time-varying arrival rate. We observe that the mean field limit provides an accurate approximation to the empirical measure on each component.  We also notice that the empirical measure is indeed affected by the non-stationary arrival rate and is also non-stationary. 

\paragraph{Figures~\ref{Fig_y0t} - \ref{Fig_ykt(3)}:}

Figures~\ref{Fig_y0t} - \ref{Fig_y3t} show the simulation results of $Y^{N}_{t}$ with a 95\% confidence interval, vs. the mean field limit $y(t)$ with  two  standard deviations of the unscaled diffusion limit, when $K=3$. The initial distribution of bikes is given as $Y_{0}^{N}=(0,0.5,0.5,0)$. In Figure~\ref{Fig_y0t}, the black curve shows $Y_{t}^{N}(0)$, the simulated proportion of stations with no bikes over time. To produce the simulation, we take an average of 50 independent sample paths. The red dashed line shows $y_{t}(0)$, the mean field limit of the proportion of stations with no bikes over time, which is obtained from solving the system of ODEs in Theorem~\ref{fluid_limit}. The green curves show $2\sigma(Y_{t}^{N}(0))$ or two standard deviations of $Y_{t}^{N}(0)$, from the 50 sample paths in simulation. Lastly, the blue dashed lines show $2\sigma(D_{t})/N$ or two standard deviations of the unscaled diffusion limit, which is obtained from solving the system of ODEs from Theorem~\ref{dt_solution}  Equation (\ref{ode_var}). We also add a purple dashed line to represent the time-varying arrival rate, with black dashed vertical lines to indicate the peaks and valleys of the arrival rate and the corresponding mean field limit.  Again, we observe that the mean field limit and the diffusion limit provide an accurate approximation to the simulation results.  This accuracy serves to validate the correctness of the mean field and diffusion limits we proved earlier. Moreover, with a small time lag, the simulation results of $Y_{t}^{N}(0)$ are positively associated with movements of the arrival rate. Figures~\ref{Fig_y1t}, ~\ref{Fig_y2t}, and~\ref{Fig_y3t} show the dynamics of the proportion of stations with 1, 2, and 3 bikes respectively. Unlike the case of zero bikes, we observe that the dynamics are negatively associated with movements of the arrival rate.  To observe these dynamics in one graph, Figure~\ref{Fig_ykt(3)} combines Figures~\ref{Fig_y0t} -~\ref{Fig_y3t} in one graph, but only keeps the mean field limit curves.   This allows the reader to visualize the dynamics of the mean field empirical measure under a time-varying arrival rate.

\paragraph{Figure \ref{Fig_Vart}  :} Figure~\ref{Fig_Vart} shows the simulation results of the variance of $D_{t}^{N}$ vs. the numerical solution for the system of ODEs regarding the covariance matrix of $D_{t}$ (see Equation~(\ref{ode_var})) when $K=3$. The approximation of the variance of diffusion limit to the variance of the actual diffusion process is quite accurate. We also observe that as arrival rate increases, the variance of the proportion of stations with no bikes increases, while the proportion of stations with 1, 2, or 3 bikes decreases, and vice versa.

\paragraph{Figure~\ref{Fig_circ_NS}  :}  Figure~\ref{Fig_circ_NS} shows the the average number of bikes in circulation over time, which is denoted as 
\begin{equation}
C_t^{N}\triangleq M-\sum_{j=0}^{K}j\cdot Y_{t}^{N}(j)N.
\end{equation} 
We use $N\left(\gamma-\sum_{j=0}^{K}j\cdot y_{t}(j)\right)$ to approximate $\mathbb{E}[C_{t}^{N}]$, the expectation of the number of bikes in circulation. We use 
\begin{eqnarray}\label{circ_var}
& &\text{Var}\left[M-\sum_{j=0}^{K}j\cdot Y_{t}^{N}(j)N \right] \nonumber \\
&=&\text{Var}\left[\sum_{j=0}^{K}j\cdot Y_{t}^{N}(j)N \right]\nonumber\\
&=& N^2\left[\sum_{j=0}^{K}j^2 \cdot\text{Var}\left[Y^{N}_{t}(j)\right]+\sum_{i=0}^{K}\sum_{j=i+1}^{K}2ij\cdot \text{Cov}\left[Y^{N}_{t}(i),Y^{N}_{t}(j)\right]\right]\nonumber\\
&=& N^2\left[\sum_{j=0}^{K}j^2 \cdot\text{Var}\left[y_{t}(j)+\frac{1}{\sqrt{N}}D_{t}^{N}(j)\right]\right.\nonumber\\
& &\left.+\sum_{i=0}^{K}\sum_{j=i+1}^{K}2ij\cdot \text{Cov}\left[y_{t}(i)+\frac{1}{\sqrt{N}}D_{t}^{N}(i),y_{t}(j)+\frac{1}{\sqrt{N}}D_{t}^{N}(j)\right]\right]\nonumber\\
&\approx & N\left[\sum_{j=0}^{K}j^2 \cdot\text{Var}[D_{t}(j)]+\sum_{i=0}^{K}\sum_{j=i+1}^{K}2ij\cdot \text{Cov}[D_{t}(i),D_{t}(j)]\right]\nonumber\\
\end{eqnarray}
to approximate $\mathrm{Var}[C^{N}_{t}]$, the variance of the number of bikes in circulation. The black curve shows the simulated result of the average number of bikes in circulation.  The red dashed line is the mean field limit of the number of bikes in circulation over time, from solving the system of ODEs obtained by Theorem~\ref{fluid_limit}. The green curves are $2\sigma(C_{t}^{N})$, or two standard deviations from the mean of the number of bikes in circulation we obtain by simulating 50 independent sample paths of the stochastic bike sharing model. Lastly, the blue dashed lines show  our approximation of  $2\sigma(C_{t}^{N})$  using the diffusion limit (see Equation (\ref{circ_var})), which can be computed from solving the system of ODEs from Theorem~\ref{dt_solution} Equation (\ref{ode_var}). We also add a purple dashed line to represent the time-varing arrival rate, with black dashed vertical lines to indicate the peaks and valleys of the arrival rate and the corresponding mean field limit. Again, we can see that the mean field limit and the diffusion limit provide a high quality approximation for the actual dynamics of the average number of bikes in circulation.  Moreover, we observe two important things.  First, the average number of bikes in circulation is also time varying and fluctuates between 40 bikes and 80 bikes.  Second, the average number of bikes in circulation lags slightly behind the arrival rate, which is a common phenomenon in non-stationary queues.  

\subsubsection{Additional Commentary on the Lag Effect}

In the case where the arrival process is non-stationary, we observe a lag effect on the change of empirical measure in response to the change of the arrival rate.   Here we explain what the dynamics of the lag effect are in this non-stationary case.  We observe in Figure \ref{Fig_y0t} and Figure \ref{Fig_y3t} that after time 5, when the effect of the transient behavior is reduced,  the proportion of stations with no bikes increases as arrival rate increases. However, the proportion of stations with 3 bikes goes down as arrival rate increases. The intuition is that when more people starting picking up bikes at stations, we end up with more empty stations and less full stations.  Similarly, as the arrival rate decreases, the proportion of stations with no bikes decreases while the proportion of stations with 3 bikes increases. The intuition is that when less people starting picking up bikes at stations, we end up with less empty stations and more full stations.  

We also observe a lag between the peak(valley) of arrival and the peak(valley) of the proportion of stations with no bikes (Figure \ref{Fig_y0t}). There is also a lag between the peak(valley) of arrival and the valley(peak) of the proportion of stations with 3 bikes (Figure \ref{Fig_y3t}). This is because it takes time for the empirical process to respond to the change in the arrival rate.  In Figure \ref{Fig_lag}, we plot the size of the lag for different values of the service rate $\mu$.  We see that the lag effect has the most impact on the proportion of stations with one bike.  It also has an effect on the proportion of stations with no bikes, 2 bikes and 3 bikes, however, the time lag is much smaller than that of one bike.  We also find that the lag effect does not affect maximums and minimums the same way.  We observe in all of the plots, except for proportion of stations with one bike, that the lag effect is more pronounced for minimums than maximums.  This is especially true in the case of the proportion of stations having two bikes.

\begin{figure}[H]
\centering
\vspace{-.25in}
\includegraphics[width=0.65\textwidth]{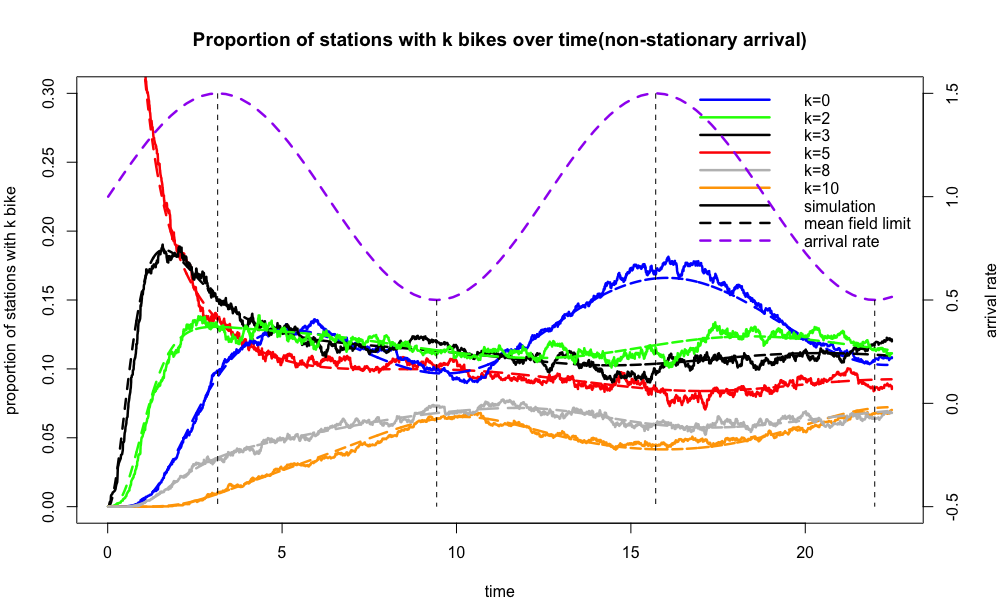}
\captionsetup{justification=centering}
\caption{Proportion of stations with k bikes over time when $K=10$\\ The solid lines represent the simulation results $Y_{t}^{N}$. The dashed lines of the same color represent the corresponding mean field limit $y_t$.}\label{Fig_ykt}
\end{figure}

\begin{figure}[H]
\centering
\vspace{-.25in}
\includegraphics[width=0.65\textwidth]{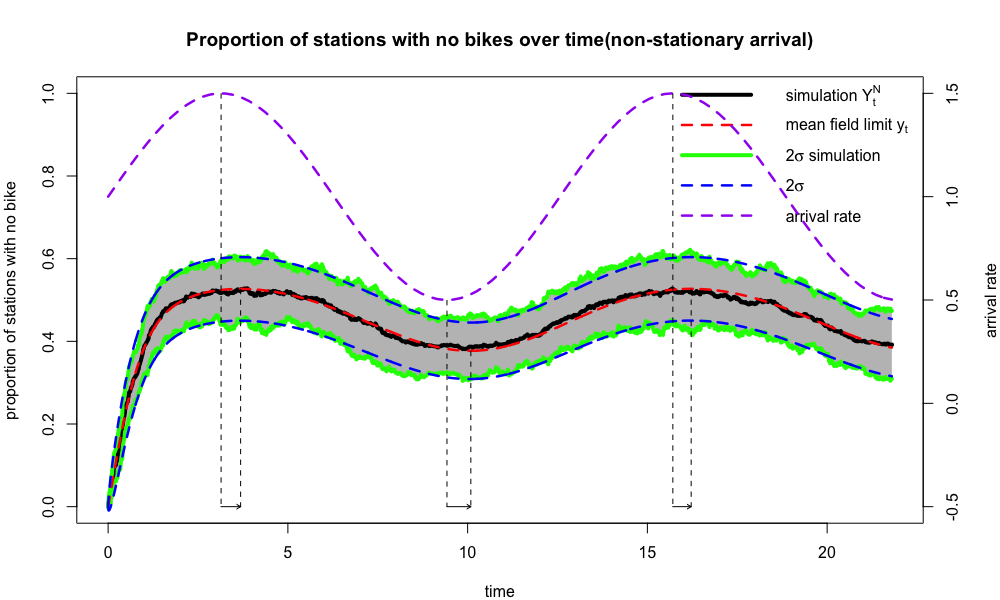}
\caption{Simulation results of $Y_{t}^{N}(0)$ vs. $y_{t}(0)$ when $K=3$}\label{Fig_y0t}
\end{figure}

\begin{figure}[H]
\centering
\vspace{-.25in}
\includegraphics[width=0.65\textwidth]{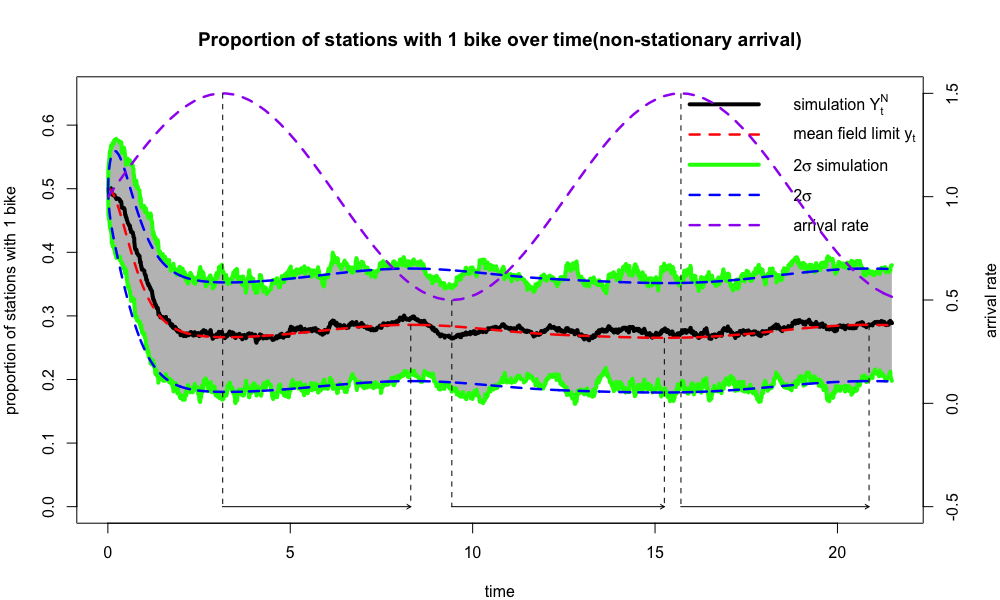}
\caption{Simulation results of $Y_{t}^{N}(1)$ vs. $y_{t}(1)$ when $K=3$}\label{Fig_y1t}
\end{figure}

\begin{figure}[H]
\centering
\vspace{-.25in}
\includegraphics[width=0.65\textwidth]{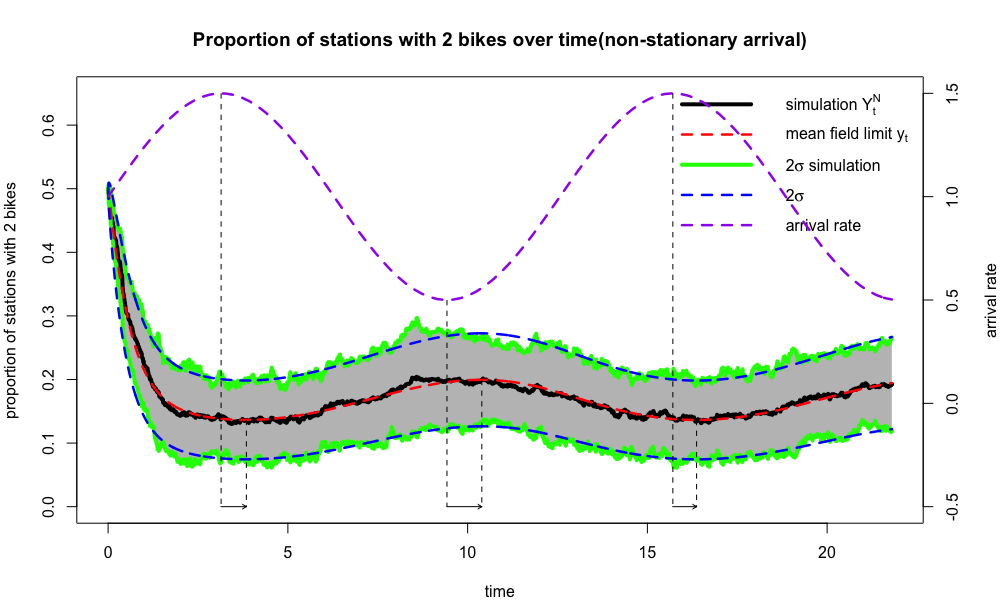}
\caption{Simulation results of $Y_{t}^{N}(2)$ vs. $y_{t}(2)$ when $K=3$}\label{Fig_y2t}
\end{figure}

\begin{figure}[H]
\centering
\vspace{-.25in}
\includegraphics[width=0.65\textwidth]{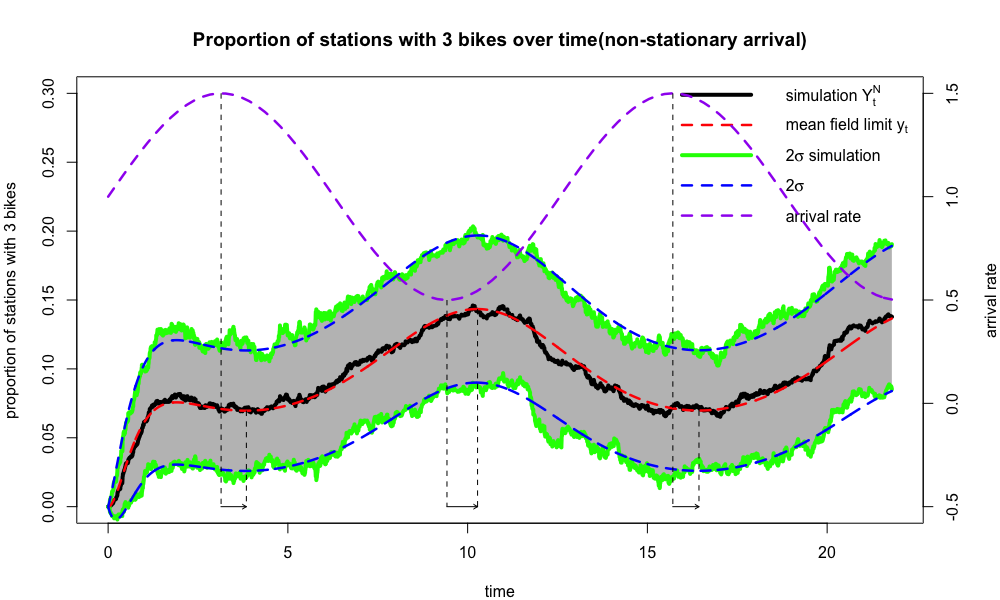}
\caption{Simulation results of $Y_{t}^{N}(3)$ vs. $y_{t}(3)$ when $K=3$}\label{Fig_y3t}
\end{figure}

\begin{figure}[H]
\centering
\vspace{-.25in}
\includegraphics[width=0.65\textwidth]{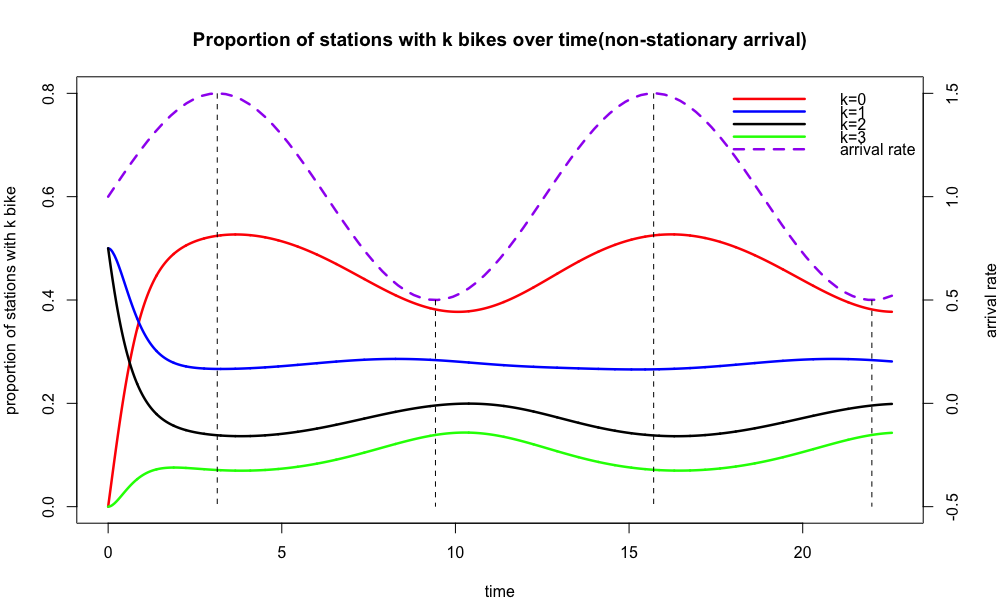}
\caption{Mean field limits of the proportion of stations with $k$ bikes over time when $K=3$}\label{Fig_ykt(3)}
\end{figure}

\begin{figure}[H]
\centering
\vspace{-.25in}
\includegraphics[width=0.65\textwidth]{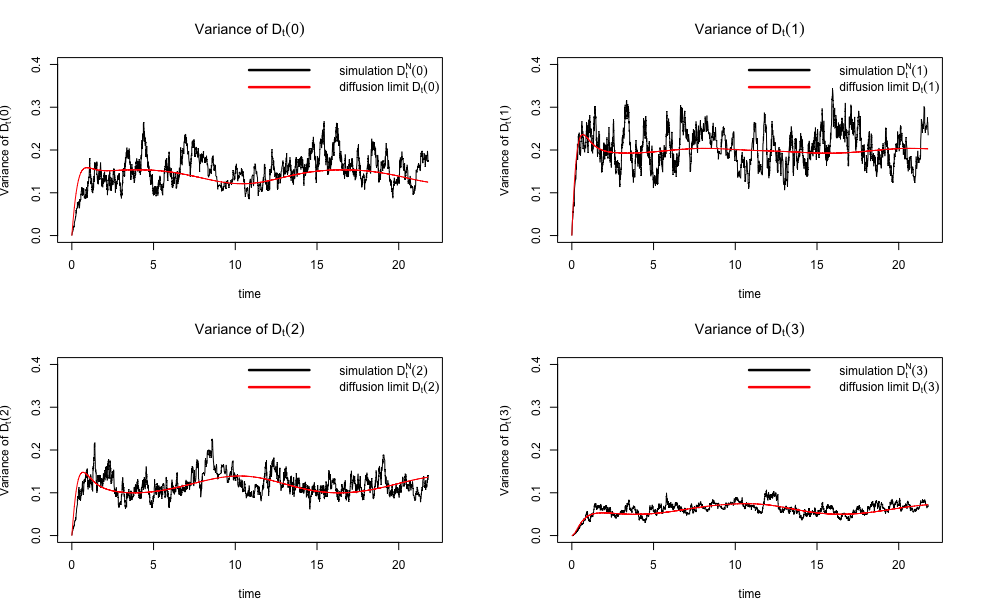}
\caption{Simulation results of the $\text{Var}[D_{t}^{N}]$ vs. $\text{Var}[D_{t}]$ with $K=3$}\label{Fig_Vart}
\end{figure}

\begin{figure}[H]
\centering
\vspace{-.25in}
\includegraphics[width=0.65\textwidth]{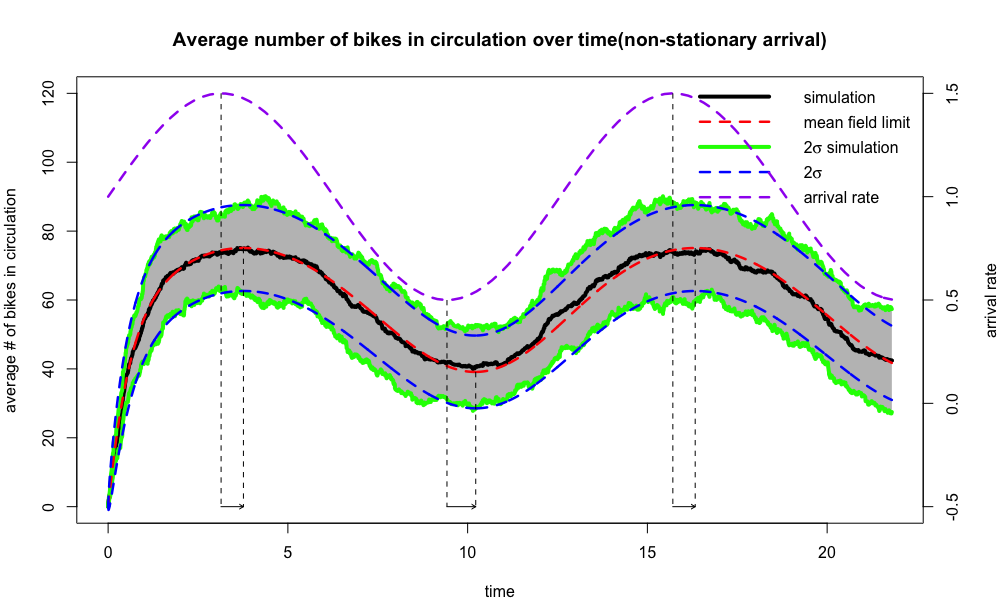}
\captionsetup{justification=centering}
\caption{Simulation results vs. mean field limit of the average number of bikes in circulation with $K=3$ and $M=150$}\label{Fig_circ_NS}
\end{figure}

\begin{figure}[H]
\centering
\vspace{-.25in}
\includegraphics[width=0.65\textwidth]{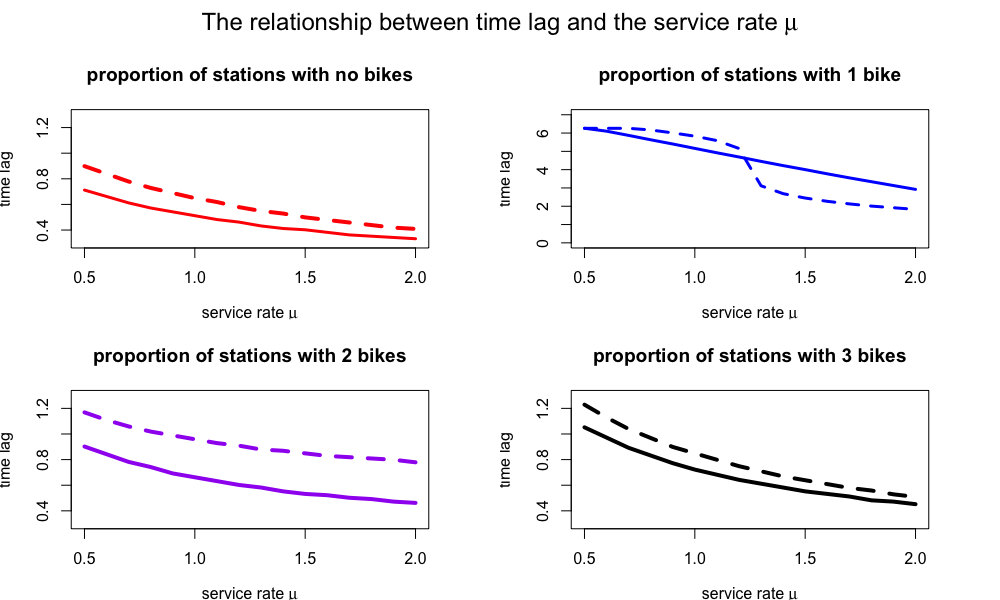}
\captionsetup{justification=centering}
\caption{The relationship between time lag and the service rate $\mu$. The (solid, dashed) lines represent the time lag when arrival rate reaches (maximum, minumum).}\label{Fig_lag}
\end{figure}



\subsection{Simulation Experiments with Heterogeneous Arrival Rates and Capacities}
 In this section, we provide the results of the simulation studies of our stochastic bike sharing model with heterogeneous arrival rates and capacities. We also extend the scale of the system in our example to make it closer to reality. Some of the common model parameters that we use in all of our simulation experiments are given as follows:  $N=800$, $\mu=1$, and for each station, $K=20,30,40,$ or 50 with equal probabilities. The number of sample paths we use is 30. Other parameters are specified in the illustration of each figure below.
 
\paragraph{Figure~\ref{Fig_capacity_distribution} :}  Figure \ref{Fig_capacity_distribution} shows the distribution of station capacities of Citi Bike (December 2017). We observe that most station capacities range from  20 to 50. Thus we set the parameter $K=20,30,40,$ or 50 with equal probabilities. 

\paragraph{Figure~\ref{Fig_citi_average_em} :} Figure \ref{Fig_citi_average_em} shows the average empirical distribution of CitiBike in a day (September 25th, 2017), with one standard deviation. This is exactly the empirical measure $y$ we try to analyze in our bike sharing model. Studying the empirical measure and the its fluctuation over time is of great importance to understanding the system dynamics, providing prediction and guidance for rebalancing and strategic design.

\paragraph{Figure~\ref{Fig_simulation_citi} :} In Figure \ref{Fig_simulation_citi}, we simulate the real Citi Bike network (New York City area) and show the empirical measure results from the simulation. The network setup is exactly the same as CitiBike over the month of September, 2017, where we have $N=695$ number of stations with capacities being the same as Citi Bike. We use Fourier regression to provide a fit for arrival functions for each station, and use them as heterogeneous time-varying arrival rates in our bike sharing model. We used a constant travel time rate $\mu=3.75$, which comes from taking average of the trip duration data. We run the simulation for a 24-hour period and plot the average empirical measure from the simulation. 

\paragraph{Figure~\ref{Fig_difference} :} Figure \ref{Fig_difference} shows the difference in average empirical measure between simulation and real CitiBike data. We can see that the simulated empirical measure have at most 0.02 difference from the real CitiBike data. We can also observe that stations with 10-28 bikes have a positive difference while stations with less than 10 bikes have a negative difference. This is because rebalancing happens in the real CitiBike network, where bikes will be moved from stations with more bikes to stations with less bikes during peak hours to make the system more balanced, something we are not able to capture in our model. This causes the result from our model to have a larger number of  stations with more bikes, and a smaller number of stations with less stations, compared to reality. Besides, we also didn't consider the effect of information on CitiBike customers in our model, such as smartphone app that tells people the number of bikes and docks at stations in real time. In reality, a lot of people rely on the app to find stations for picking up and dropping off, which drives people to stations that have more bikes or docks. This would cause the same effect in making simulation results slightly different from reality. Overall, we can conclude from the simulation results that the bike sharing model proposed in our paper is doing well in capturing the real-life situation at CitiBike, which shows that our mean field limit and diffusion limit results will be of great value to the understanding the bike sharing systems in real life.

\paragraph{Figure~\ref{Fig_y_stationary} :}This figure shows the empirical measure from simulation and its mean field limit in the heterogeneous arrivals and capacities case where arrivals are stationary. The arrival rates are set up to be $\lambda=0.25,0.5,0.75,$ or 1 with equal probabilities. In this figure we showed the the empirical measure and its corresponding mean field limit with 95\% confidence interval at a given time point. The blue bars show the empirical measure from simulation, which is an average of 30 sample paths. The red bars show the corresponding mean field limit we get from solving the ODEs in (\ref{fluid_eqn_ex}). The error bars show the 95\% confidence interval from the simulation results. We can see that the mean field limit is fitting well with empirical measure, which shows that our model is able to capture heterogeneous arrivals and capacities very well, and it can be easily adapted to more complex models, such as those with non-uniform routing probabilities, as shown in Section \ref{Ext}.

\paragraph{Figure~\ref{Fig_yk_stationary} :} This figure shows the evolution of different components of the empirical measure over time in the heteogeneous arrivals and capacities setting with the same parameters setup as in Figure \ref{Fig_yk_stationary}.  The solid lines represents proportion of stations with $k$ bikes where $k=5, 10, 15, 20, 25$. The dashed lines represents their corresponding mean field limits. Again the mean field limits are fitting well with the empirical measure.

\paragraph{Figure~\ref{Fig_y_nonstationary} :}This figure shows the empirical measure from simulation and its mean field limit in the heterogeneous arrivals and capacities case where arrivals are non-stationary. The arrival rates are set up to be $\lambda(t)=4\lambda_0(1+\sin(2t))$ where $\lambda_0=0.25,0.5,0.75,1$ with equal probabilities. In this figure we showed the the empirical measure and its corresponding mean field limit with 95\% confidence interval at a given time point. The blue bars show the empirical measure from simulation, which is an average of 30 sample paths. The red bars show the corresponding mean field limit we get from solving the ODEs in (\ref{fluid_eqn_ex}). The error bars show the 95\% confidence interval from the simulation results. We can see that the mean field limit is fitting well with empirical measure, which shows that our model is able to capture heterogeneous arrivals that are also non-stationary very well, and it can be easily adapted to more complex models, such as those with non-uniform routing probabilities, as shown in Section \ref{Ext}.

\paragraph{Figure~\ref{Fig_yk_nonstationary} :} This figure shows the evolution of different components of the empirical measure over time in the heterogeneous arrivals and capacities setting with the same parameters setup as in Figure \ref{Fig_yk_nonstationary}.  The solid lines represents proportion of stations with $k$ bikes where $k=0, 10, 20, 30, 40, 50$. The dashed lines represents their corresponding mean field limits. We see that the empirical measure is also non-stationary and that the mean field limits are fitting well with the empirical measure.

\paragraph{Computational cost}:
Another major benefit of our model compared to just using simulation to study the bike sharing system is that our model is extremely computationally inexpensive, given that it only involves numerically solving ODEs. The computational time for running the simulation example in Figure \ref{Fig_y_nonstationary}, with 10 sample paths for 8 unit times is 13.5 hours, while getting the mean field limit for the same example only takes less than 30 seconds.
 
\begin{figure}[H]
\centering
\vspace{-.25in}
\includegraphics[width=0.5\textwidth]{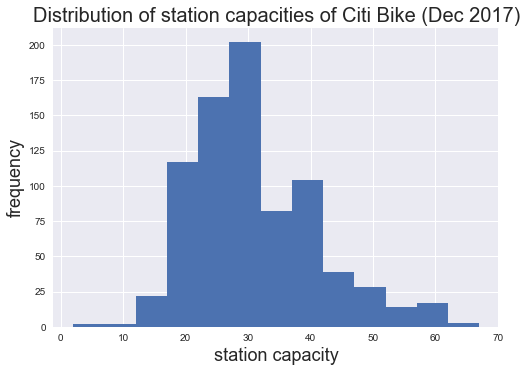}
\caption{Histogram of capacities at Citi Bike}\label{Fig_capacity_distribution}
\end{figure}

\begin{figure}[H]
\centering
\vspace{-.25in}
\includegraphics[width=0.65\textwidth]{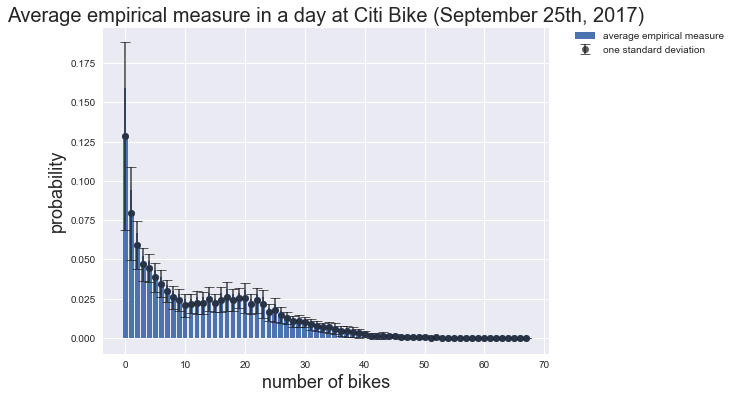}
\caption{Average empirical measure in a week at Citi Bike}\label{Fig_citi_average_em}
\end{figure}

\begin{figure}[H]
\centering
\vspace{-.25in}
\includegraphics[width=0.65\textwidth]{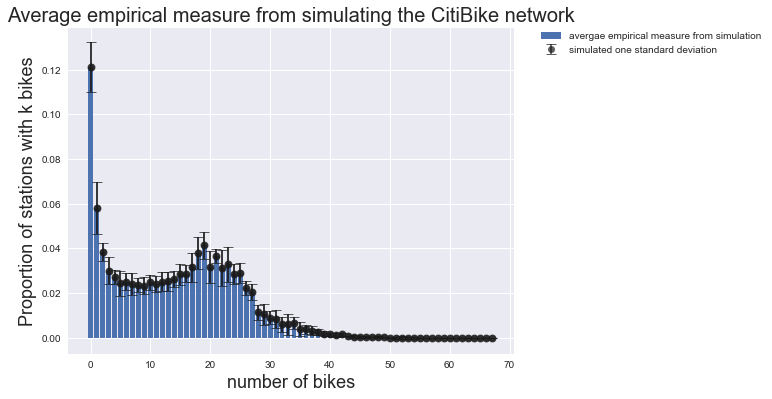}
\caption{Average empirical measure from simulating the Citi Bike network}\label{Fig_simulation_citi}
\end{figure}

\begin{figure}[H]
\centering
\vspace{-.25in}
\includegraphics[width=0.65\textwidth]{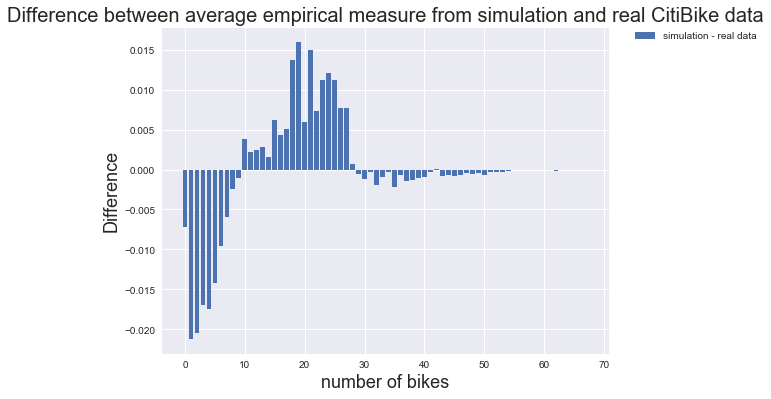}
\caption{Difference between average empirical measure from simulation and real Citi Bike data}\label{Fig_difference}
\end{figure}

\begin{figure}[H]
\centering
\vspace{-.25in}
\includegraphics[width=0.65\textwidth]{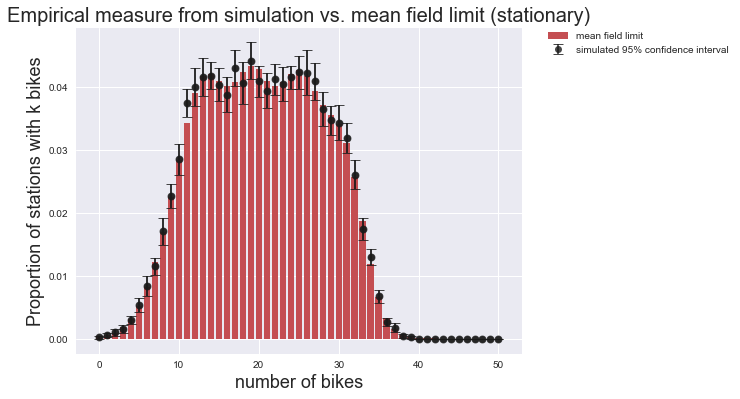}
\caption{Empirical measure from simulation vs. mean field limit}\label{Fig_y_stationary}
\end{figure}

\begin{figure}[H]
\centering
\vspace{-.25in}
\includegraphics[width=0.65\textwidth]{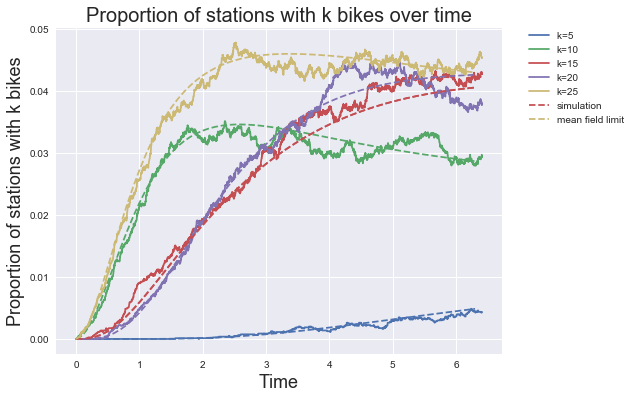}
\caption{Empirical measure from simulation vs. mean field limit}\label{Fig_yk_stationary}
\end{figure}

\begin{figure}[H]
\centering
\vspace{-.25in}
\includegraphics[width=0.65\textwidth]{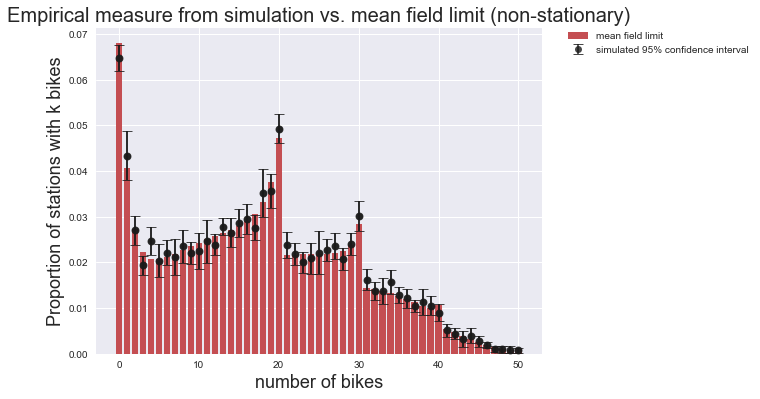}
\caption{Empirical measure from simulation vs. mean field limit (non-stationary)}\label{Fig_y_nonstationary}
\end{figure}

\begin{figure}[H]
\centering
\vspace{-.25in}
\includegraphics[width=0.65\textwidth]{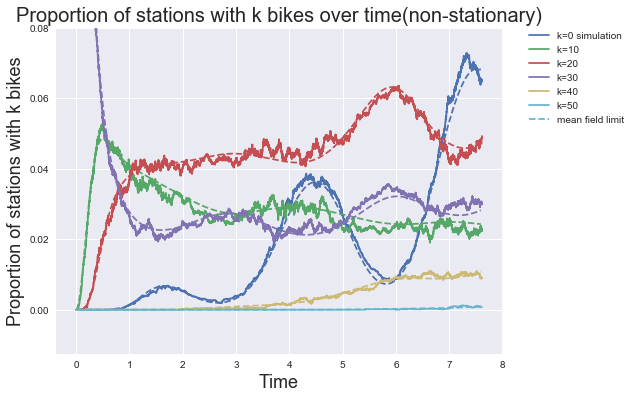}
\caption{Empirical measure from simulation vs. mean field limit (non-stationary)}\label{Fig_yk_nonstationary}
\end{figure}

\section{Applied and Practical Value of Our Work}\label{value}
The results presented in this paper have great value to the operations of bike sharing systems. First of all, not only is the empirical measure itself an important performance measure to the bike sharing systems, but it also allows us to obtain salient performance measures such as $y_{t}(0)$ (the proportion of stations with no bikes), $y_{t}(K)$ (the proportion of stations that are full), $M - \sum^{K}_{j=0} j \cdot y_{t}(j)N$ (the number of bikes in circulation), among others.  The empirical measure approach is a significant reduction in computational complexity when compared to the full stochastic model.  

Moreover, it is the first time that a diffusion limit of the empirical measure of a inhomogeneous bike sharing systems is derived, which gives great insights to the fluctuation of the systems performance over time from a queueing and risk management perspective. It is a much more computationally efficient way to study the system behaviors in strategic planning stage, where given the design of the system and parameters such as arrival rate, travel time distribution and fleet size, you can easily get the proportion of problematic stations (empty and full stations) over time, which is a key measure that we want to minimize in a bike sharing system. More importantly, the diffusion limit of the empirical measure provides a refinement to the mean field limit, which gives us a better understanding to system fluctuations over time. By using the diffusion limit, we are able to build confidence intervals for the proportion of problematic stations. This helps us design a BSS with low blocking experiences, not just in expectation, but with high probability.  This is especially important for managers of BSSs who want to control the dynamics of bike stations and reduce the volatility of station fluctuations.

Another benefit of our work is that deriving the mean field limit and diffusion limit provides a way to formulate optimization problems associated with BSS. For example, the mean field and diffusion limit for the proportion of problematic stations can be used as objective functions that we try to minimize given system parameters. The expectation and variance of the number of bikes in circulation can computed through the mean field limit and diffusion limit of the empirical process, and therefore can be used to determine optimal fleet size.  The current literature only uses mean field limits and our diffusion limits can be used to determine optimal fleet size under a more complex stochastic setting instead of a deterministic setting.  

Our analysis also benefits the rebalancing of bike sharing systems. It can be used to provide short-term prediction of the empirical measure of the system, which helps the operators of BSS identify when key measures such as proportion of problematic stations, or number of bikes in circulation, will go beyond a threshold and act beforehand. Different from traditional data analysis methods that predict patterns of BSS solely using history data, our method analyzes the system behaviors from a more fundamental way, one that does not heavily rely on data.  Most importantly, as peak hours only last a few hours, decisions for rebalancing need to be made fast. In this case our method is much more computationally efficient and effective, than just doing simulations, which tends to be very slow and intractable for large scale systems like CitiBike.  

Another major benefit of using the empirical measure approach is that if CitiBike chooses to add a station, then in the empirical measure approach the dimensionality will only increase if the number of docks at the new station is larger than all of the rest.  However, in the individual station model, it will automatically increase by one.  Even though systems like CitiBike are large, they continue to add stations and increase the complexity of simulating the system.  The empirical measure approach that we advocate in this work does not get worse when the management chooses to add stations.

From a broader perspective, the framework of mean field and diffusion limits we established in this paper provide an  effective and efficient way to analyze different problems associated with BSS, such as, designing reasonable architecture of a BSS, finding a better path scheduling, improving inventory management, redistributing the bikes among stations or clusters, price optimization, application of intelligent information technologies and so forth.  Our work serves as the initial step to exploring these important problems facing BSS.

\section{Conclusion}\label{conc}

In this paper, we construct a bike sharing queueing model that incorporates the finite capacity of stations. Since our model is intractable to analyze directly, especially for a large number of stations, we propose to analyze the limiting dynamics of an empirical process that describes the proportion of stations that have a certain number of bikes.  We prove a mean field limit and a functional central limit theorem for our stochastic bike sharing empirical process, showing that the mean field limit and the variance of the empirical process can be described by a system of $\frac{1}{2}(K+4)(K+1)$ differential equations where $K$ is the maximum station capacity.  We compare the mean field limit and the functional central limit theorem with simulation and show that the differential equations approximate the mean and variance of the empirical process extremely well. 

There are many directions for future work.  The first direction would be to generalize the arrival and service distribution to follow general distributions.  As Figure \ref{Fig_histogram} shows, the trip durations are not exponential and are closer to a lognormal distribution.  An extension to general distributions would aid in showing how the non-exponential distributions affect the dynamics of the empirical process.  Recent work by \citet{ ko2016strong, ko2017diffusion, pender2016approximations} provides a Poisson process representation of phase type distributions and Markovian arrival processes.  This work might be useful in deriving new limit theorems for the queueing process with non-renewal arrival and service processes.  

In the non-stationary context, it is not only important to understand the dynamics of the mean field limit, but also it is important to know various properties of the mean field limit. For example, it would be informative to know the size of the amplitude and the frequency of the mean field limit when the arrival rate is periodic. One way to analyze the amplitude and the frequency is to exploit methods from non-linear dynamics like Lindstedt's method and the two-variable expansion method in \citet{pender2017queues, pender2018analysis, nirenberg2018impact, novitzky2019nonlinear}.

Lastly, it is also interesting to consider a spatial model of arrivals to the bike sharing network.  In this case, we would consider customers arriving to the system via a spatial Poisson process and customers would choose among the nearest stations to retrieve a bike.  This spatial process can model the real choices that riders make and would model the real spatial dynamics of bike sharing networks.  We intend to pursue these extensions in future work.   

\begin{figure}[H]
\centering
\vspace{-.25in}
\includegraphics[width=0.65\textwidth]{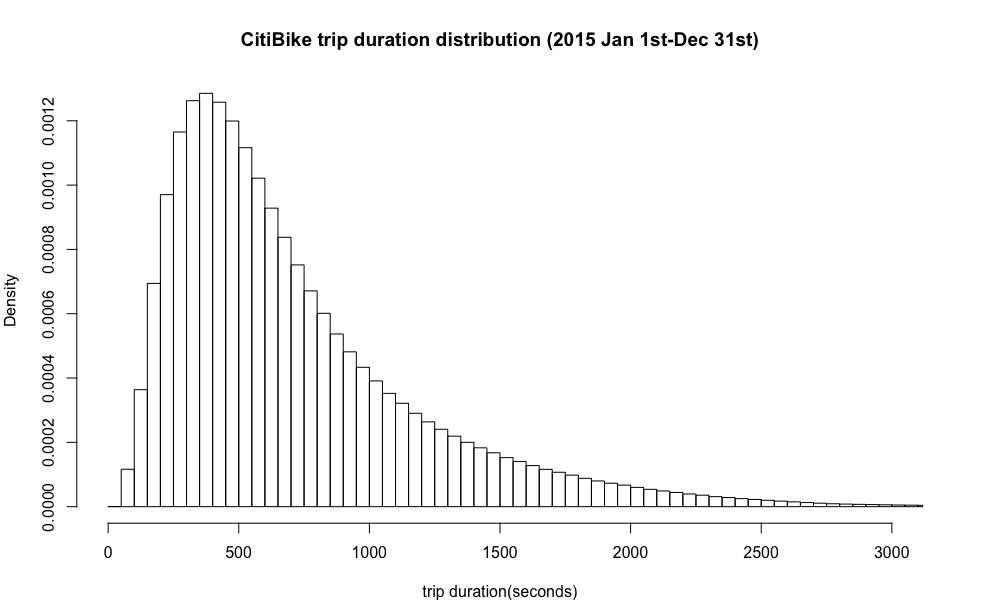}
\captionsetup{justification=centering}
\caption{Histogram of Citi Bike trip duration (Jan 1st-Dec 31st, 2015) \\ Mean = 713s/11.9 mins,  Median = 576s/9.6 mins, Stdev = 492s/8.2 mins.}\label{Fig_histogram}
\end{figure}

\section*{Acknowledgments}
Jamol Pender gratefully acknowledges the support of National Science Foundation (NSF) for Jamol Pender's Career Award CMMI \# 1751975.


\appendix
\section{Appendix} \label{App}

\begin{proof}[Proof of Proposition \ref{functional_forward}]
 The time derivative of the expectation $\mathbb{E}[f(\mathbf{X}(t))]$ can be derived by the following discretization method. Taking the expectation on $f(\mathbf{X}(t+\Delta))$ conditioned on $\mathbf{X}(0)=\mathbf{x}$ for some small $\Delta>0$, we have
 \begin{eqnarray}
 & &\mathbb{E}\left[f(\mathbf{X}(t+\Delta)|\mathbf{X}(0)=\mathbf{x}\right] \nonumber \\
 &=&\sum_{i=1}^{N}f(\mathbf{X}(t))\left[1-\lambda_{i}\Delta\mathbf{1}_{\{X_{i}(t)>0\}}-\mu P_{i} \left(M-\sum_{k=1}^{N}X_{k}(t)\right)\Delta\mathbf{1}_{\{X_{i}(t)<K_{i}\}}\right] \nonumber \\
& & +\sum_{i=1}^{N} \left[f(\mathbf{X}(t)-\mathbf{1}_{i})\lambda_{i}\Delta\mathbf{1}_{\{X_{i}(t)>0\}}+ f(\mathbf{X}(t)+\mathbf{1}_{i})\mu P_{i}\left(M-\sum_{k=1}^{N}X_{k}(t)\right)\Delta\mathbf{1}_{\{X_{i}(t)<K_{i}\}} \right]\nonumber  \\
& &+o(\Delta).
 \end{eqnarray}
 Then 
 \begin{eqnarray}\label{forward_deriv}
 & &\mathbb{E}\left[f(\mathbf{X}(t+\Delta)|\mathbf{X}(0)=\mathbf{x}\right]- f(\mathbf{X}(t))\nonumber \\
&=& \sum_{i=1}^{N} \left[f(\mathbf{X}(t)-\mathbf{1}_{i})-f(\mathbf{X}(t)\right]\lambda_{i}\Delta\mathbf{1}_{\{X_{i}(t)>0\}}\nonumber \\
& & +\sum_{i=1}^{N}\left[f(\mathbf{X}(t)+\mathbf{1}_{j})-f(\mathbf{X}(t)\right]\mu P_{i}\left(M-\sum_{k=1}^{N}X_{k}(t)\right)\Delta\mathbf{1}_{\{X_{i}(t)<K_{i}\}}\nonumber   \\
& &+o(\Delta).
 \end{eqnarray}
 By dividing by $\Delta$ and taking the expectation on both sides of Equation(\ref{forward_deriv}) we get
 \begin{eqnarray}
& & \frac{\mathbb{E}\left[f(\mathbf{X}(t+\Delta))\right]-\mathbb{E}\left[f(\mathbf{X}(t))\right]}{\Delta}\nonumber \\
&=& \sum_{i=1}^{N} \mathbb{E}\left[\left(f(\mathbf{X}(t)-\mathbf{1}_{i})-f(\mathbf{X}(t)\right)\lambda_{i}\mathbf{1}_{\{X_{i}(t)>0\}}\right]\nonumber \\
& & +\sum_{i=1}^{N} \mathbb{E}\left[\left(f(\mathbf{X}(t)+\mathbf{1}_{i})-f(\mathbf{X}(t)\right)\mu P_{i}\left(M-\sum_{k=1}^{N}X_{k}(t)\right)\mathbf{1}_{\{X_{i}(t)<K_{i}\}} \right]\nonumber  \\
& &+o(\Delta)/\Delta .
 \end{eqnarray}
 Taking $\Delta \rightarrow 0$ yields
 \begin{eqnarray}
& &\frac{d}{dt}\mathbb{E}\left[f(\mathbf{X}(t))\right]\nonumber \\
&=& \sum_{i=1}^{N} \mathbb{E}\left[\left(f(\mathbf{X}(t)-\mathbf{1}_{i})-f(\mathbf{X}(t)\right)\lambda_{i}\mathbf{1}_{\{X_{i}(t)>0\}}\right]\nonumber  \\
& & +\sum_{i=1}^{N} \mathbb{E}\left[\left(f(\mathbf{X}(t)+\mathbf{1}_{i})-f(\mathbf{X}(t)\right)\mu P_{i}\left(M-\sum_{k=1}^{N}X_{k}(t)\right)\mathbf{1}_{\{X_{i}(t)<K_{i}\}} \right].
 \end{eqnarray}
 Let $f(\mathbf{X}(t))=X_i(t)$ for $i=1,\cdots,N$, we have the following functional forward equations to each component of $\mathbf{X}(t)$,
 \begin{eqnarray}
 & &\lefteqn{ \updot{\mathbb{E}}[f(X_i(t)) | X_i(0) = x_i]  \equiv  \frac{d}{dt} \mathbb{E}[f(X_i(t)) | X_i(0) = x_i]  } \nonumber \\ &=&  \mathbb{E}\left[ \left( f(X_i(t) +1) - f(X_i(t)) \right) \cdot \left(\mu P_{i}\left(M-\sum_{k=1}^{N}X_{k}(t)\right)\mathbf{1}\{X_{i}(t)<K_{i}\}\right) \right]  \nonumber\\& &+  \mathbb{E}\left[ \left( f(X_i(t) -1) - f(X_i(t)) \right) \cdot \left( \lambda_{i}\mathbf{1}\{ X_{i}(t) > 0 \}  \right) \right] .
 \end{eqnarray} 
\end{proof}

\begin{proof}[Proof of Theorem \ref{fluid_limit}]
Let $|\cdot|$ denote the Euclidean norm in $\mathbb{R}^{K+1}$, then
\begin{eqnarray}
\left|Y_{t}^{N}-y_{t}\right|&=&\left|Y_{0}^{N}+M_{t}^{N}+\int_{0}^{t}\beta
(Y_{s}^{N})ds-y_0-\int_{0}^{t}b(y_{s})ds\right|\nonumber \\
& =& \left|Y_{0}^{N}-y_0+M_{s}^{N}+\int_{0}^{t}\left(\beta
(Y_{s}^{N})-b(Y_{s}^{N})\right)ds+\int_{0}^{t}(b(Y_{s}^{N})-b(y_{s}))ds\right|.\nonumber \\
\end{eqnarray}
Now define the random function $f^{N}(t)=\sup_{s\leq t}\left|Y_s^{N}-y_s\right|$, we have
$$f^{N}(t)\leq |Y_0^{N}-y_0|+\sup_{s\leq t}|M_s^{N}|+\int_0^t|\beta(Y_s^{N})-b(Y_s^{N})|ds+\int_{0}^{t}|b(Y_s^{N})-b(y_s)|ds.$$
By Proposition~\ref{Lipschitz}, $b(y)$ is Lipschitz with respect to Euclidean norm. Let $L$ be the Lipschitz constant of $b(y)$, then
\begin{eqnarray}
f^{N}(t)&\leq& |Y_0^{N}-y_0|+\sup_{s\leq t}|M_s^{N}|+\int_0^t|\beta(Y_s^{N})-b(Y_s^{N})|ds+\int_{0}^{t}|b(Y_s^{N})-b(y_s)|ds \nonumber \\
&\leq& |Y_0^{N}-y_0|+\sup_{s\leq t}|M_s^{N}|+\int_0^t|\beta(Y_s^{N})-b(Y_s^{N})|ds+L\int_{0}^{t}|Y_s^{N}-y_s|ds \nonumber \\
&\leq& |Y_0^{N}-y_0|+\sup_{s\leq t}|M_s^{N}|+\int_0^t|\beta(Y_s^{N})-b(Y_s^{N})|ds+L\int_{0}^{t}f^{N}(s)ds.
\end{eqnarray}

By Gronwall's lemma (See \citet{ames1997inequalities}), 
\begin{equation}
f^{N}(t) \leq \left(|Y_0^{N}-y_0|+\sup_{s\leq t}|M_s^{N}|+\int_0^t|\beta(Y_s^{N})-b(Y_s^{N})|ds\right)e^{Lt}.
\end{equation}

Now  to bound $f^{N}(t)$ term by term, we define the function $\alpha: [0,1]^{K+1}\rightarrow \mathbb{R}^{K+1}$ as
\begin{eqnarray}
\alpha(y)&=&\sum_{x\neq y}|x-y|^2Q(y,x)\nonumber\\
&=&\frac{1}{N}\sum_{n}\sum_{r}\left[\frac{1}{rNR_{\max}}(\mathbf{1}_{(r,n-1)}+\mathbf{1}_{(r,n)})\mathbf{1}_{n>0}  \right.\nonumber \\& &+
\left. \left(\frac{M}{N}-\sum_{n'}\sum_{r'}n'y(r',n')\right)(\mathbf{1}_{(r,n+1)}+\mathbf{1}_{(r,n)})\mathbf{1}_{n<K} \right] \cdot y(r,n) 
\end{eqnarray}
and consider  the following four sets 
\begin{eqnarray}
\Omega_0 &=& \{|Y_0^{N}-y_0|\leq \delta \}, \\
\Omega_1 &=& \left\{\int_0^{t_0}|\beta(Y_s^{N})-b(Y_s^{N})|ds\leq \delta \right\}, \\
\Omega_2 &=& \left\{\int_0^{t_0}\alpha(Y_t^{N})dt \leq A(N)t_0 \right\}, \\
\Omega_3 &=& \left\{\sup_{t\leq t_0}|M_t^{N}|\leq \delta \right\} ,
\end{eqnarray}
where $\delta=\epsilon e^{-Lt_0}/3$. Here the set $\Omega_{1}$ is to bound the initial condition, the set $\Omega_{2}$ is to bound the drift term $\beta$ and the limit of drift term $b$, and  the sets $\Omega_{2},\Omega_{3}$ are to bound the martingale $M_{t}^{N}$.

Therefore on the event $\Omega_0\cap \Omega_1\cap \Omega_3$,
\begin{equation}\label{e}
f^{N}(t_0)\leq 3\delta e^{Lt_0}=\epsilon.
\end{equation}

Since $\lim_{N\rightarrow \infty}\frac{M}{N}=\gamma$ and $\lim_{N\rightarrow \infty}NR_{\max}=\frac{1}{\Lambda}$, we can
choose  $N$ large enough such that
$$\frac{M}{N}\leq 2\gamma,\quad NR_{\max}\geq \frac{1}{2\Lambda}.$$
And by the proof of Proposition \ref{Lipschitz}, there exists $C>0$ such that $\lim_{N\rightarrow \infty}r_{i}^{N}\geq \Lambda/C$. See the proof of Proposition \ref{Lipschitz} in the Appendix for details. 
Thus
\begin{eqnarray}
\alpha(y) &\leq& \frac{1}{N}\sum_{n}\sum_{r} \left(\frac{2\Lambda}{r}\cdot 2+2\gamma \cdot 2 \right)  \cdot y(r,n)\nonumber \\ &\leq&
 \frac{1}{N}\left( \frac{4\Lambda}{\Lambda/C}+4\gamma \right)\sum_{n}\sum_{r}  y(r,n)\nonumber \\ &\leq&
  \frac{4}{N}\left(C+\gamma \right).
  \end{eqnarray}
Consider the stopping time $$T=t_0\wedge \inf \left\{t\geq 0:\int _{0}^{t}\alpha(Y_s^{N})ds>A(N)t_0\right\}.$$
By Proposition~\ref{bound},
$$\mathbb{E}\left(\sup_{t\leq T}|M_{t}^{N}|^2\right)\leq 4\mathbb{E}\int_{0}^{T}\alpha(Y_{t}^{N})dt\leq 4A(N)t_0. $$
On $\Omega_2$, we have $T =t_0$, so 
$\Omega_2 \cap \Omega_3^{c}\subset \{\sup_{t\leq T}|M^{N}_t|>\delta\}$. By
Chebyshev's inequality, 
\begin{equation}
\mathbb{P}(\Omega_2 \cap \Omega_3^{c})\leq \mathbb{P}\left(\sup_{t\leq T}|M^{N}_t|>\delta\right)\leq \frac{\mathbb{E}\left(\sup_{t\leq T}|M_{t}^{N}|^2\right)}{\delta^2}\leq 4A(N)t_0/\delta^2.
\end{equation}
Thus, by Equation (\ref{e}), we have the following result,
\begin{equation}
\begin{split}
\mathbb{P}\left(\sup_{t\leq t_0}|Y_t^{N}-y_t|>\epsilon\right)&\leq \mathbb{P}(\Omega_0^c\cup \Omega_1^c\cup \Omega_3^c)\\
&\leq \mathbb{P}(\Omega_2 \cap \Omega_3^{c})+\mathbb{P}(\Omega_0^{c}\cup \Omega_1^{c}\cup \Omega_2^{c})\\
&\leq 4A(N)t_0/\delta^2+\mathbb{P}(\Omega_0^{c}\cup \Omega_1^{c}\cup \Omega_2^{c})\\
&=36A(N)t_0 e^{2Lt_0}/\epsilon^2+\mathbb{P}(\Omega_0^{c}\cup \Omega_1^{c}\cup \Omega_2^{c}).
\end{split}
\end{equation}

Let $A(N)=\frac{4(C+\gamma)}{N}$, then $\Omega_{2}^{c}=\emptyset$.
And since $Y_0^N\xrightarrow{p} y_0$,   $\lim_{N\rightarrow \infty}\mathbb{P}(\Omega_{2}^{c})=0$. Therefore we have $$\lim_{N\rightarrow \infty}\mathbb{P}\left(\sup_{t\leq t_0}|Y_t^{N}-y_t|>\epsilon\right)=\lim_{N\rightarrow\infty}\mathbb{P}(\Omega_{1}^{c}) .$$
By Proposition \ref{drift}, $\lim_{N\rightarrow\infty}\mathbb{P}(\Omega_{1}^{c})=0$.
Thus, we proved the final result
$$\lim_{N\rightarrow \infty}\mathbb{P}\left(\sup_{t\leq t_0}|Y_t^{N}-y_t|>\epsilon\right)=0.$$
\end{proof}

\begin{customthm}{\ref{bound}}[Bounding Martingale]
For any stopping time $T$ such that $\mathbb{E}(T)<\infty$, we have
\begin{equation}
\mathbb{E}\left(\sup_{t\leq T}|M_{t}^{N}|^2\right)\leq 4\mathbb{E}\int_{0}^{T}\alpha(Y_{t}^{N})dt.
\end{equation}  
\end{customthm}

\begin{proof}[Proof of Proposition~\ref{bound}]
Let $\tilde{\mu}$ be the jump measure of $Y_{t}^{N}$, and $\nu$ be its compensator, defined on $(0,\infty)\times [0,1]$  by 
\begin{eqnarray}
\tilde{\mu}=\sum_{t:Y_{t}^{N}\neq Y_{t-}^{N}}\delta(t,Y_{t}^{N}), & & \nu(dt,B)=Q(Y_{t-}^{N},B)dt \quad \forall B\in \mathcal{B}([0,1]).
\end{eqnarray}
Let $\tilde{Y}_{m,t}^{N}$ be the jump chain of $Y_{t}^{N}$, $J_{m}$ be the jump time, then we have for any $t\in [0,\infty)$, $J_{n}\leq t<J_{n+1}$ for some $n\geq 0$.
The martingale $M_{t}^{N}$ can be written as
\begin{eqnarray}
M_{t}^{N}&=&Y^{N}_{t}-Y_{0}^{N}-\int_{0}^{t}\beta(Y_{s}^{N})ds \nonumber  \\
&=&\sum_{m=0}^{n-1}(\tilde{Y}^{N}_{m+1}-\tilde{Y}^{N}_{m})-\int_{0}^{t}\int_{0}^{1}(y-Y_{s-}^{N})Q(Y_{s-}^{N},dy)ds\nonumber  \\
&=&\int_{0}^{t}\int_{0}^{1}(y-Y_{s-}^{N})\tilde{\mu}(ds,dy)-\int_{0}^{t}\int_{0}^{1}(y-Y_{s-}^{N})\nu(ds,dy)\nonumber  \\
&=&\int_{0}^{t}\int_{0}^{1}(y-Y_{s-}^{N})(\tilde{\mu}-\nu)(ds,dy).
\end{eqnarray}
Note the following identity
\begin{eqnarray}
\left(M_{t}^{N}\right)^2&=&2\int_{0}^{t}\int_{0}^{1}M_{s-}(y-Y_{s-}^{N})(\tilde{\mu}-\nu)(ds,dy)+\int_{0}^{t}\int_{0}^{1}(y-Y_{s-}^{N})^{2}\tilde{\mu}(ds,dy).
\end{eqnarray}
This can be established by verifying that the jumps of the left and right hand sides agree, and that their derivatives agree between jump times. Then we can write
\begin{equation}\label{martingale}
\left(M_{t}^{N}\right)^2=N_{t}^{N}+\int_{0}^{t}\alpha(Y_{t}^{N})ds
\end{equation}
where
\begin{equation}
N_{t}^{N}=\int_{0}^{t}\int_{0}^{1}H(s,y)(\tilde{\mu}-\nu)(ds,dy),
\end{equation}
and
\begin{equation}
H(s,y)=2M_{s-}(y-Y_{s-}^{N})+(y-Y_{s-}^{N})^{2}.
\end{equation}
Consider the previsible process
\begin{equation}
H_{2}(t,y)=H(t,y)\mathbf{1}_{\{t\leq T\wedge T_{n}\}}
\end{equation}
where $T_{n}=\inf\{t\geq 0: \beta(Y_{t}^{N})>n\}\wedge n$.

Then
\begin{equation}
N_{T\wedge T_{n}}=\int_{0}^{\infty}\int_{0}^{1}H_{2}(t,y)(\tilde{\mu}-\nu)(dt,dy),
\end{equation}
and
\begin{eqnarray}
\mathbb{E}\int_{0}^{\infty}\int_{0}^{1}|H_{2}(s,y)|\nu(ds,dy)&=&\mathbb{E}\int_{0}^{T\wedge T_{n}}\int_{0}^{1}|2M_{s-}(y-Y_{s-}^{N})+(y-Y_{s-}^{N})^{2}|\nu(ds,dy) \nonumber \\
&= & \mathbb{E}\int_{0}^{T\wedge T_{n}}\left(2|M_{t}^{N}|\beta(Y_{t}^{N})+\alpha(Y_{t}^{N})\right)dt \nonumber \\
& \leq & \mathbb{E}\int_{0}^{T\wedge T_{n}}2(2+n^2)ndt+\mathbb{E}\int_{0}^{T}\alpha(Y_{t}^{N})dt \nonumber \\
&\leq &2n^4+4n^2+\frac{4(C+\gamma)}{N}\mathbb{E}(T)<\infty.
\end{eqnarray}
By Theorem 8.4 in \citet{darling2008differential}, we have that $N^{T\wedge T_{n}}$ is a martingale. Replace $t$ by $T\wedge T_{n}$ in Equation~(\ref{martingale}) and take expectation to obtain
\begin{equation}
\mathbb{E}(|M_{T\wedge T_{n}}|^2)=\mathbb{E}(N_{T\wedge T_{n}})+\mathbb{E}\int_{0}^{T\wedge T_{n}}\alpha(Y_{t}^{N})dt.
\end{equation}
Since $N^{T\wedge T_{n}}$ is a martingale, 
\begin{equation}
\mathbb{E}(N_{T\wedge T_{n}})=\mathbb{E}(N_0)=0.
\end{equation}
Apply Doob's $L^2$-inequality to the martingale $M^{T\wedge T_{n}}$ to obtain
\begin{eqnarray}
\mathbb{E}\left(\sup_{t\leq T\wedge T_{n} }|M_{t}|^2\right)&\leq& 4\mathbb{E}(|M_{T\wedge T_{n}}|^2)\nonumber \\
&=& 4\mathbb{E}\int_{0}^{T\wedge T_{n}}\alpha(Y_{t}^{N})dt.
\end{eqnarray}
\end{proof}

\begin{customthm}{\ref{Lipschitz}}[Asymptotic Drift is Lipschitz]
The drift function $b(y)$ given in Equation (\ref{eqn:b}) is a Lipschitz function with respect to the Euclidean norm in $\mathbb{R}^{K+1}$. 
\end{customthm}
\begin{proof}[Proof of Proposition~\ref{Lipschitz}]

Assume that $\max_{i}(\lambda_{i}^{N})\leq C
<\infty$ for all $N$, 
then $$r_{i}^{N}=\frac{R_{i}^{N}}{R_{\max}}=\frac{\frac{1}{N\lambda_{i}^{N}}}{\max_{i}\left(\frac{1}{N\lambda_{i}^{N}}\right)}=\frac{\min_{i}\lambda_{i}^{N}}{\lambda_{i}^{N}}.$$
By Assumption~\ref{assumption}, $$NR_{\max}=\frac{1}{\min_{i}\lambda_{i}^{N}}\rightarrow \frac{1}{\Lambda}>0.$$
Therefore $$\lim_{N\rightarrow \infty}r_{i}^{N}\geq \lim_{N\rightarrow \infty}\frac{\Lambda}{\max_{i}\lambda_{i}^{N}}\geq \Lambda/C.$$
Thus the integral that defines function $b$ should start from $\Lambda/C$ instead of 0, i.e.
$$b(y)=\iint\limits_{[\Lambda/C,1]\times [0,...,K]}\left[ \frac{\Lambda}{r}(\mathbf{1}_{(r,n-1)}-\mathbf{1}_{(r,n)})\mathbf{1}_{n>0}+\left(\gamma-\sum_{n}\int ndy(r,n) \right)(\mathbf{1}_{(r,n+1)}-\mathbf{1}_{(r,n)})\mathbf{1}_{n<K}\right] dy(r,n).$$
Now consider $y,\tilde{y}\in [0,1]^{K+1}$,
$$\left|b(y)-b(\tilde{y})\right|\leq 2\left(\frac{\Lambda}{\Lambda/C}+\gamma \right)|y-\tilde{y}|=2(C+\gamma)|y-\tilde{y}|\triangleq L|y-\tilde{y}|$$
where $|\cdot|$ denotes the Euclidean norm.
\end{proof}

\begin{customthm}{\ref{Lipschitz_ex}}[Asymptotic Drift is Lipschitz]
The drift function $b(y)$ given in Equation (\ref{eqn:b_ex}) is a Lipschitz function with respect to the Euclidean norm in $\mathbb{R}^{K_{\max}+1}$. 
\end{customthm}
\begin{proof}[Proof of Proposition~\ref{Lipschitz}]

Assume that $\max_{i}(\lambda_{i}^{N})\leq C
<\infty$ for all $N$, 
then $$\frac{p_i^{N}}{r_{i}^{N}}=\frac{P_{i}^{N}/P_{\max}^{N}}{R_i^N/R^{N}_{\max}}=\lambda_i^{N}\frac{R^{N}_{\max}}{P_{\max}^{N}}$$
By Assumption~\ref{assumption_ex}, $$R^{N}_{\max}/P^{N}_{\max}\rightarrow \frac{1}{\Lambda}>0.$$
Therefore $$\limsup_{N\rightarrow \infty}\frac{p_{i}^{N}}{r_{i}^{N}}\leq C/\Lambda.$$
Thus the function to be integrated in $b$ is bounded,
\begin{eqnarray}
b(y)&=&\sum_{k\in \mathcal{K}}\iint\limits_{[0,1]\times [0,1]}\left[ \frac{p\Lambda}{r}(\mathbf{1}_{(r,n-1,p,k)}-\mathbf{1}_{(r,n,p,k)})\mathbf{1}_{n>0}\right.\nonumber\\
& &+\left.p\mathcal{P}\left(\gamma-\sum_{n}\sum_{k\in \mathcal{K}}\iint\limits_{[0,1]\times [0,1]} ndy(r,n,p,k) \right)(\mathbf{1}_{(r,n+1,p,k)}-\mathbf{1}_{(r,n,p,k)})\mathbf{1}_{n<k}\right] dy(r,n,p,k).\nonumber
\end{eqnarray}

Now consider $y,\tilde{y}\in [0,1]^{K_{\max}+1}$,
$$\left|b(y)-b(\tilde{y})\right|\leq 2\left(\frac{C}{\Lambda}\cdot \Lambda+\mathcal{P}\gamma \right)|y-\tilde{y}|=2(C+\mathcal{P}\gamma)|y-\tilde{y}|\triangleq L|y-\tilde{y}|$$
where $|\cdot|$ denotes the Euclidean norm.
\end{proof}

\begin{customthm}{\ref{drift}}[Drift is Asymptotically Close to a Lipschitz Drift]
Under Assumption~\ref{assumption}, we have for any $\epsilon>0$ and $s\geq 0$,
$$\lim_{N\rightarrow \infty}\mathbb{P}(|\beta(Y_s^{N})-b(Y_s^{N})|>\epsilon)= 0.$$
\end{customthm}
\begin{proof}[Proof of Proposition~\ref{drift}]
 \begin{equation}
 \begin{split}
 &|\beta(Y_s)-b(Y_s)|^2\\
 \leq &2^2\sum_{n}\left|\sum_{r}\frac{1}{rNR_{\max}}Y_s(r,n)-\int_{0}^{1}\frac{\Lambda}{r}dY_s(r,n)\right|^2+2^2\sum_{n}\left|\sum_{r}\frac{M}{N}Y_s(r,n)-\int_{0}^{1}\gamma dY_s(r,n)\right|^2\\
 &+2^2\sum_{n}\left|\sum_{r}\left(\sum_{n'}\sum_{r'}n'Y_s(r',n')\right)Y_s(r,n)-\int_{0}^{1}\left(\sum_{n'}\int_{0}^{1}n'dY_s(r',n')\right)dY_s(r,n)\right|^2.
 \end{split}
 \end{equation} 
 It suffices to show each term goes to zero as $N\rightarrow \infty$.\\
Since $Y_s^{N}$ is a discrete random variable, we have for each $n$,
$$\int_{0}^{1}f(r)dY_{s}(r,n)=\sum_{r}f(r)Y_{s}(r,n)$$  holds for any function $f$.\\
Then
\begin{equation}\label{bound1}
\begin{split}
&\left|\sum_{r}\frac{1}{rNR_{\max}}Y_s(r,n)-\int_{0}^{1}\frac{\Lambda}{r}dY_s(r,n)\right|\\
\leq &\left|\sum_{r}\left(\frac{1}{rNR_{\max}}-\frac{\Lambda}{r}\right) Y_s(r,n)\right|+\left|\sum_{r}\frac{\Lambda}{r}Y_s(r,n)-\int_{0}^{1}\frac{\Lambda}{r}dY_s(r,n)\right|\\
= & \left|\frac{1}{NR_{\max}}-\Lambda\right|\sum_{r}\frac{1}{r}Y_s(r,n)\\
\leq &\left|\frac{1}{NR_{\max}}-\Lambda\right| \frac{C}{\Lambda}\sum_{r}Y_{s}(r,n)\\
\leq &\left|\frac{1}{NR_{\max}}-\Lambda\right|\frac{C}{\Lambda}\rightarrow 0.\\
\end{split}	
\end{equation}
Similarly,
		\begin{equation}\label{bound2}
\begin{split}
&\left|\sum_{r}\frac{M}{N}Y_s(r,n)-\int_{0}^{1}\gamma dY_s(r,n)\right|\\
\leq &\left|\sum_{r}\left(\frac{M}{N}-s\right) Y_s(r,n)\right|+\left|\sum_{r}\gamma Y_s(r,n)-\int_{0}^{1}\gamma dY_s(r,n)\right|\\
\leq &\left|\frac{M}{N}-\gamma \right|\rightarrow 0.\\
\end{split}	
\end{equation}	
The last term is zero since $Y_s^{N}$ is discrete and 
$$\int_{0}^{1}f(r)dY_{s}(r,n)=\sum_{r}f(r)Y_{s}(r,n)$$ for any function $f$.
\end{proof}

\begin{customthm}{\ref{driftbound_ex}}[Drift is Asymptotically Close to a Lipschitz Drift]
Under Assumption~\ref{assumption_ex}, we have for any $\epsilon>0$ and $s\geq 0$,
$$\lim_{N\rightarrow \infty}\mathbb{P}(|\beta(Y_s^{N})-b(Y_s^{N})|>\epsilon)= 0.$$
\end{customthm}
\begin{proof}[Proof of Proposition~\ref{driftbound_ex}]
 \begin{eqnarray}
 & & |\beta(Y_s)-b(Y_s)|^2\nonumber\\
 &\leq & 2^2\sum_{n}\left|\sum_{r,p,k}\frac{pP^{N}_{\max}}{rR^{N}_{\max}}Y_s(r,n,p,k)-\sum_{k\in \mathcal{K}}\iint_{[0,1]\times [0,1]}\frac{p\Lambda}{r}dY_s(r,n,p,k)\right|^2\nonumber\\
 & &+2^2\sum_{n}\left|\sum_{r,p,k}pNP^{N}_{\max}\frac{M}{N}Y_s(r,n,p,k)-\sum_{k\in \mathcal{K}}\iint_{[0,1]\times [0,1]}p\mathcal{P}\gamma dY_s(r,n,p,k)\right|^2\nonumber\\
 & &+2^2\sum_{n}\left|\sum_{r,p,k}\left(\sum_{n'}\sum_{r',p',k'}n'Y_s(r',n',p',k')\right)Y_s(r,n,p,k)\right.\nonumber\\
 & &-\left.\sum_{k\in \mathcal{K}}\iint_{[0,1]\times [0,1]}\left(\sum_{n'}\sum_{k\in \mathcal{K}}\iint_{[0,1]\times [0,1]}n'dY_s(r',n',p',k')\right)dY_s(r,n,p,k)\right|^2.
 \end{eqnarray}
It suffices to show each term goes to zero as $N\rightarrow \infty$.\\
Since $Y_s^{N}$ is a discrete random variable, we have for each $n$,
$$\sum_{k\in \mathcal{K}}\iint_{[0,1]\times [0,1]}f(r,p,k)dY_{s}(r,n,p,k)=\sum_{k\in \mathcal{K}}\iint_{[0,1]\times [0,1]}f(r,p,k)Y_{s}(r,n,p,k)$$  holds for any function $f$.\\
Then
\begin{equation}\label{bound1_ex}
\begin{split}
&\left|\sum_{r,p,k}\frac{pP^{N}_{\max}}{rR^{N}_{\max}}Y_s(r,n,p,k)-\sum_{k\in \mathcal{K}}\iint_{[0,1]\times [0,1]}\frac{\Lambda}{r}dY_s(r,n,p,k)\right|\\
\leq &\left|\sum_{r,p,k}\left(\frac{pP^{N}_{\max}}{rR^{N}_{\max}}-\frac{p\Lambda}{r}\right) Y_s(r,n,p,k)\right|+\left|\sum_{r,p,k}\frac{p\Lambda}{r}Y_s(r,n,p,k)-\sum_{k\in \mathcal{K}}\iint_{[0,1]\times [0,1]}\frac{p\Lambda}{r}dY_s(r,n,p,k)\right|\\
= & \left|\frac{P^{N}_{\max}}{R^{N}_{\max}}-\Lambda\right|\sum_{r,p,k}\frac{p}{r}Y_s(r,n,p,k)\\
\leq &\left|\frac{P^{N}_{\max}}{R^{N}_{\max}}-\Lambda\right| \frac{C}{\Lambda}\sum_{r,p,k}Y_{s}(r,n,p,k)\\
\leq &\left|\frac{P^{N}_{\max}}{R^{N}_{\max}}-\Lambda\right|\frac{C}{\Lambda}\rightarrow 0.\\
\end{split}	
\end{equation}
Similarly,
\begin{equation}\label{bound2_ex}
\begin{split}
&\left|\sum_{r,p,k}pNP^{N}_{\max}\frac{M}{N}Y_s(r,n,p,k)-\sum_{k\in \mathcal{K}}\iint_{[0,1]\times [0,1]}p\mathcal{P}\gamma dY_s(r,n,p,k)\right|\\
\leq &\left|\sum_{r,p,k}\left(NP^{N}_{\max}\frac{M}{N}-\mathcal{P}\gamma\right) pY_s(r,n,p,k)\right|+\left|\sum_{r,p,k}p\mathcal{P}\gamma Y_s(r,n)-\sum_{k\in \mathcal{K}}\iint_{[0,1]\times [0,1]}p\mathcal{P}\gamma dY_s(r,n,p,k)\right|\\
\leq &\left|NP^{N}_{\max}\frac{M}{N}-\mathcal{P}\gamma \right|\rightarrow 0.\\
\end{split}	
\end{equation}	
The last term is zero since $Y_s^{N}$ is discrete and 
$$\sum_{k\in \mathcal{K}}\iint_{[0,1]\times [0,1]}f(r,p,k)dY_{s}(r,n,p,k)=\sum_{k\in \mathcal{K}}\iint_{[0,1]\times [0,1]}f(r,p,k)Y_{s}(r,n,p,k)$$ for any function $f$.
\end{proof}

\begin{proof}[Proof of Lemma~\ref{martingale-brackets}]
By Dynkin's formula,
\begin{equation}
\begin{split}
\boldlangle \sqrt{N}M^{N}(k)\boldrangle_{t}=&\int_{0}^{t}N\sum_{x\neq Y_{s}^{N}}|x(k)-Y_{s}^{N}(k)|^2 Q(Y_{s}^{N},x)ds\\
=&N\int_{0}^{t}\alpha(Y_{s}^{N})(k)ds\\
=&\int_{0}^{t}\sum_{r} \left[\frac{1}{rNR_{\max}}\left(Y_{s}^{N}(r,k+1)\mathbf{1}_{k<K}+Y_{s}^{N}(r,k)\mathbf{1}_{k>0}\right)\right.\\
&+\left.\left(\frac{M}{N}-\sum_{n'}\sum_{r'}n'Y_{s}^{N}(r',n')\right)\left(Y_{s}^{N}(r,k)\mathbf{1}_{k<K}+Y_{s}^{N}(r,k-1)\mathbf{1}_{k>0}\right)\right]ds\\
=&\int_{0}^{t}(\beta_{+}(Y_{s}^{N})(k)+\beta_{-}(Y_{s}^{N})(k))ds.
\end{split}
\end{equation}

To compute  $\boldlangle \sqrt{N}M^{N}(k),\sqrt{N}M^{N}(k+1)\boldrangle_{t}$ for $k<K$, since
\begin{equation}
\begin{split}
&\boldlangle M^{N}(k)+M^{N}(k+1)\boldrangle_{t}\\
=&\int_{0}^{t}\sum_{x\neq Y_{s}^{N}}\left|x(k)+x(k+1)-Y_{s}^{N}(k)-Y_{s}^{N}(k+1)\right|^2 Q(Y_{s}^{N},x)ds\\
=&\frac{1}{N}\int_{0}^{t}\sum_{r} \left[\frac{1}{rNR_{\max}}\left(Y_{s}^{N}(r,k+2)\mathbf{1}_{k<K-1}+Y_{s}^{N}(r,k)\mathbf{1}_{k>0}\right)\right.\\
&+\left.\left(\frac{M}{N}-\sum_{n'}\sum_{r'}n'Y_{s}^{N}(r',n')\right)\left(Y_{s}^{N}(r,k+1)\mathbf{1}_{k<K-1}+Y_{s}^{N}(r,k-1)\mathbf{1}_{k>0}\right)\right]ds.
\end{split}
\end{equation}
We have that
\begin{equation}
\begin{split}
&\boldlangle \sqrt{N}M^{N}(k),\sqrt{N}M^{N}(k+1)\boldrangle_{t}\\
=&\frac{N}{2}\left[\boldlangle M^{N}(k)+M^{N}(k+1)\boldrangle_{t}-\boldlangle M^{N}(k)\boldrangle_{t}-\boldlangle M^{N}(k+1)\boldrangle_{t}\right]\\
=&\frac{1}{2}\int_{0}^{t}\sum_{r} \left[\frac{1}{rNR_{\max}}\left(Y_{s}^{N}(r,k+2)\mathbf{1}_{k<K-1}+Y_{s}^{N}(r,k)\mathbf{1}_{k>0}\right)\right.\\
&+\left.\left(\frac{M}{N}-\sum_{n'}\sum_{r'}n'Y_{s}^{N}(r',n')\right)\left(Y_{s}^{N}(r,k+1)\mathbf{1}_{k<K-1}+Y_{s}^{N}(r,k-1)\mathbf{1}_{k>0}\right)\right]ds\\
&-\frac{1}{2}\int_{0}^{t}(\beta_{+}(Y_{s}^{N})(k)+\beta_{+}(Y_{s}^{N})(k+1)+\beta_{-}(Y_{s}^{N})(k)+\beta_{-}(Y_{s}^{N})(k+1))ds\\
=&-\int_{0}^{t}\sum_{r}\left[\frac{1}{rNR_{\max}}Y_{s}^{N}(r,k+1)+\left(\frac{M}{N}-\sum_{n'}\sum_{r'}n'Y_{s}^{N}(r',n')\right)Y_{s}^{N}(r,k)\right] ds.
\end{split}
\end{equation}
When $|k-j|>1$, $M^{N}(k)$ and $M^{N}(j)$ are independent, thus 
\begin{equation}
\boldlangle \sqrt{N}M^{N}(k),\sqrt{N}M^{N}(j)\boldrangle_{t}=0.
\end{equation}
\end{proof}

\begin{customthm}{\ref{driftbound}}
For any $s\geq 0$,
\begin{equation}
\limsup_{N\rightarrow \infty}\sqrt{N}\left|\beta(Y_{s}^{N})-b(Y_{s}^{N})\right|<\infty.
\end{equation}
\end{customthm}
\begin{proof}[Proof of Proposition~\ref{driftbound}]
 \begin{equation}
 \begin{split}
 &|\beta(Y_s)-b(Y_s)|\\
 \leq &2\sum_{n}\left|\sum_{r}\frac{1}{rNR_{\max}}Y_s(r,n)-\int_{0}^{1}\frac{\Lambda}{r}dY_s(r,n)\right|+2\sum_{n}\left|\sum_{r}\frac{M}{N}Y_s(r,n)-\int_{0}^{1}\gamma dY_s(r,n)\right|\\
 &+2\sum_{n}\left|\sum_{r}\left(\sum_{n'}\sum_{r'}n'Y_s(r',n')\right)Y_s(r,n)-\int_{0}^{1}\left(\sum_{n'}\int_{0}^{1}n'dY_s(r',n')\right)dY_s(r,n)\right|.
 \end{split}
 \end{equation} 
 By Equation (\ref{bound1}) and (\ref{bound2}), 
 \begin{eqnarray}
 |\beta(Y_s)-b(Y_s)|\leq 2(K+1)\left(\left|\frac{1}{NR_{\max}}-\Lambda\right|\frac{C}{\Lambda}+\left|\frac{M}{N}-\gamma\right|\right).
 \end{eqnarray}
 By the assumptions in Theorem~\ref{difftheorem}, we have
 $$\limsup_{N\rightarrow \infty}\sqrt{N}(\min_{i}\lambda_{i}^{N}-\Lambda)<\infty,\quad 
\limsup_{N\rightarrow \infty}\sqrt{N} \left(\frac{M}{N}-\gamma \right)< \infty.$$
Thus 
\begin{eqnarray}
\limsup_{N\rightarrow \infty}\sqrt{N}\left|\beta(Y_{s}^{N})-b(Y_{s}^{N})\right|&\leq& \limsup_{N\rightarrow \infty}2(K+1)\sqrt{N}\left(\left|\frac{1}{NR_{\max}}-\Lambda\right|\frac{C}{\Lambda}+\left|\frac{M}{N}-\gamma\right|\right) \nonumber \\ &<& \infty. 
\end{eqnarray}
\end{proof}

\begin{proof}[Proof of Lemma \ref{L2bound}]
By Proposition~\ref{driftbound}, $\sqrt{N}|\beta(Y_{s}^{N})-b(Y_{s}^{N})|=O(1)$, then
\begin{equation}
\begin{split}
|D_{t}^{N}|&\leq |D_{0}^{N}|+\sqrt{N}|M_{t}^{N}|+O(1)t+\int_{0}^{t}\sqrt{N} |b(Y_{s}^{N})-b(y_s)|ds\\
&\leq|D_{0}^{N}|+\sqrt{N}|M_{t}^{N}|+O(1)t+\int_{0}^{t}\sqrt{N}L|Y_{s}^{N}-y_s|ds\\
&=|D_{0}^{N}|+\sqrt{N}|M_{t}^{N}|+O(1)t+\int_{0}^{t}L|D_{s}^{N}|ds.
\end{split}
\end{equation}
By Gronwall's Lemma,
$$\sup_{0\leq t\leq T}|D_{t}^{N}|\leq e^{LT}\left(|D_{0}^{N}|+O(1)T+\sup_{0\leq t\leq T}|\sqrt{N}M_{t}^{N}|\right).$$
Then
$$\limsup_{N\rightarrow \infty}\mathbb{E}\left(\sup_{0\leq t \leq T}|D_{t}^{N}|^2\right)\leq e^{2LT}\left[\limsup_{N\rightarrow \infty}\mathbb{E}(|D_{0}^{N}|)+O(1)T+\limsup_{N\rightarrow \infty}\mathbb{E}\left(\sup_{0\leq t\leq T}\sqrt{N}|M_{t}^{N}|\right)\right]^2.$$
By Jensen's inequality and Proposition \ref{bound}, we have that 
$$\left[\mathbb{E}\left(\sup_{0\leq t\leq T}\sqrt{N}|M_{t}^{N}|\right)\right]^2\leq N\mathbb{E}\left(\sup_{0\leq t\leq T}|M_{t}^{N}|^2\right)\leq 4NA(N)T,$$
and that $A(N)=O(\frac{1}{N})$. Therefore
$$\limsup_{N\rightarrow \infty}\mathbb{E}\left(\sup_{0\leq t\leq T}\sqrt{N}|M_{t}^{N}| \right)<\infty.$$
Together with our assumption $\limsup_{N\rightarrow \infty}\mathbb{E}(|D_{0}^{N}|^2)<\infty$, we have
$$\limsup_{N\rightarrow \infty}\mathbb{E}\left(\sup_{0\leq t \leq T}|D_{t}^{N}|^2 \right)<\infty.$$
\end{proof}

\begin{proof}[Proof of Lemma \ref{tightness}]
To prove the tightness of $(D^{N})_{N=1}^{\infty}$ and the continuity of the limit points, we only need to show that the following two tightness conditions hold for each $T>0$ and $\epsilon>0$,
\begin{itemize}
\item[(i)] 
\begin{equation}
\lim_{K\rightarrow \infty}\limsup_{N\rightarrow \infty}\mathbb{P}\left(\sup_{0\leq t\leq T}|D_{t}^{N}|>K \right)=0,
\end{equation}
\item[(ii)] 
\begin{equation}
\lim_{\delta\rightarrow 0}\limsup_{N\rightarrow \infty}\mathbb{P}\left(w(D^{N},\delta,T)\geq \epsilon \right)=0
\end{equation}
\end{itemize}
where for $x\in \mathbb{D}^{d}$,
\begin{equation}
w(x,\delta,T)=\sup\left\{\sup_{u,v\in[t,t+\delta]}|x(u)-x(v)|:0\leq t\leq t+\delta\leq T\right\}.
\end{equation}
By Lemma~\ref{L2bound}, there exists $C_{0}>0$ such that
\begin{eqnarray}
\lim_{K\rightarrow \infty}\limsup_{N\rightarrow \infty}\mathbb{P} \left(\sup_{0\leq t\leq T}|D_{t}^{N}|>K \right) &\leq& \lim_{K\rightarrow \infty}\limsup_{N\rightarrow \infty}\frac{\mathbb{E}\left(\sup_{0\leq t\leq T}|D_{t}^{N}|^2 \right)}{K^2} \nonumber \\
&\leq&  \lim_{K\rightarrow \infty}\frac{C_{0}}{K^2} \nonumber \\
&=&0,
\end{eqnarray}
which proves condition (i).

For condition (ii), we have that

\begin{eqnarray}
D^N_{u} - D^N_{v} &=& \underbrace{\sqrt{N} \cdot ( M^N_{u} - M^N_{v})}_{\text{first term}} + \underbrace{\int^{u}_{v} \sqrt{N} \left( \beta(Y^N_{z}) - b(Y^N_{z})  \right) dz}_{\text{second term}} \nonumber \\&+& \underbrace{\int^{u}_{v} \sqrt{N} \left( b(Y^N_{z}) - b(y_{z})  \right) dz }_{\text{third term}}
\end{eqnarray}
for any $0<t\leq u<v\leq t+\delta\leq T$.  Now it suffices to show that each of the three terms of $D^N_{u} - D^N_{v}$ satisfies condition (ii).  In what follows, we will show that each of the three terms satisfies condition (ii) to complete the proof of tightness. 

 \textbf{For the first term $\sqrt{N} \cdot ( M^N_{u} - M^N_{v})$}, we would like to show that the limiting sample path of $\sqrt{N}M_{t}^{N}$ is a continuous Brownian motion,  by using the martingale central limit theorem.
 
Similar to the proof of Proposition~\ref{drift}, we can show that 
\begin{equation}\label{equation1}
\sup_{t\leq T}\left|\beta_{+}(Y_{t}^{N})-b_{+}(Y_{t}^{N})\right|\xrightarrow{p}0, \quad \sup_{t\leq T}\left|\beta_{-}(Y_{t}^{N})-b_{-}(Y_{t}^{N})\right|\xrightarrow{p}0.
\end{equation}
And by the proof of Proposition~\ref{Lipschitz}, $b_{+}(y), b_{-}(y)$ are also Lipschitz with constant $L$, then by the fact that the composition of Lipschitz functions are also Lipschitz,
\begin{eqnarray}\label{equation2}
\max\left\{\sup_{t\leq T}|b_{+}(Y_{t}^{N})-b_{+}(y_{t})|,\sup_{t\leq T}|b_{-}(Y_{t}^{N})-b_{-}(y_{t})|\right\}\leq L\sup_{t\leq T}|Y_{t}^{N}-y_{t}|.
\end{eqnarray}
By Theorem~\ref{fluid_limit}, 
\begin{equation}\label{equation3}
\sup_{t\leq T}|Y_{t}^{N}-y_{t}|\xrightarrow{p} 0.
\end{equation}
Thus combining Equations (\ref{equation1}), (\ref{equation2}) and (\ref{equation3}), we have 
\begin{eqnarray}
& &\lim_{N\rightarrow \infty}\mathbb{P}\left(\sup_{t\leq T}\left|\boldlangle \sqrt{N}M^N(k)\boldrangle_{t}- \boldlangle M(k)\boldrangle_{t}\right|>\epsilon\right)\nonumber \\
&=&\lim_{N\rightarrow \infty}\mathbb{P}\left(\sup_{t\leq T}\left|\int_{0}^{t}\left(\beta_{+}(Y_{s}^{N})(k)+\beta_{-}(Y_{s}^{N})(k)- b_{+}(y_{s})(k)-b_{-}(y_{s})(k)\right)ds\right|>\epsilon\right)\nonumber\\
&\leq & \lim_{N\rightarrow \infty}\mathbb{P}\left(\sup_{t\leq T}T\left|\beta_{+}(Y_{t}^{N})(k)- b_{+}(Y^{N}_{t})(k)\right|>\epsilon/4\right)+\lim_{N\rightarrow \infty}\mathbb{P}\left(\sup_{t\leq T}T\left|b_{+}(Y_{t}^{N})(k)- b_{+}(y_{t})(k)\right|>\epsilon/4\right) \nonumber\\
&+&\lim_{N\rightarrow \infty}\mathbb{P}\left(\sup_{t\leq T}T\left|\beta_{-}(Y_{t}^{N})(k)- b_{-}(Y^{N}_{t})(k)\right|>\epsilon/4\right)+\lim_{N\rightarrow \infty}\mathbb{P}\left(\sup_{t\leq T}T\left|b_{-}(Y_{t}^{N})(k)- b_{-}(y_{t})(k)\right|>\epsilon/4\right)\nonumber \\
&\leq & \lim_{N\rightarrow \infty}\mathbb{P}\left(\sup_{t\leq T}T\left|\beta_{+}(Y_{t}^{N})(k)- b_{+}(Y^{N}_{t})(k)\right|>\epsilon/4\right)+2\lim_{N\rightarrow \infty}\mathbb{P}\left(\sup_{t\leq T}LT\left|Y_{t}^{N}- y_{t}\right|>\epsilon/4 \right) \nonumber\\
& +&\lim_{N\rightarrow \infty}\mathbb{P}\left(\sup_{t\leq T}T\left|\beta_{-}(Y_{t}^{N})(k)- b_{-}(Y^{N}_{t})(k)\right|>\epsilon/4\right)\nonumber \\
&=& 0,
\end{eqnarray}
for any $\epsilon>0$ and $0\leq k\leq K$. This result implies that
\begin{equation}
\sup_{t\leq T}\left|\boldlangle \sqrt{N}M^N(k)\boldrangle_{t}- \boldlangle M(k)\boldrangle_{t}\right|\xrightarrow{p} 0.
\end{equation}

For the adjacent terms, we have
\begin{eqnarray}
& &\lim_{N\rightarrow \infty}\mathbb{P}\left(\sup_{t\leq T}\left|\boldlangle \sqrt{N}M^N(k),\sqrt{N}M^N(k+1)\boldrangle_{t}- \boldlangle M(k),M(k+1)\boldrangle_{t}\right|>\epsilon\right)\nonumber \\
&=&\lim_{N\rightarrow \infty}\mathbb{P}\left(\sup_{t\leq T}\left|\int_{0}^{t}\left[\sum_{r}\left(\frac{1}{rNR_{\max}}Y_{s}^{N}(r,k+1)+\left(\frac{M}{N}-\sum_{n'}\sum_{r'}n'Y_{s}^{N}(r',n')\right)Y_{s}^{N}(r,k)\right)\right. \right.\right.\nonumber\\
&-& \left.\left.\left.\left(\int_{0}^{1}\frac{\Lambda}{r}dy_{s}(r,k+1)+\int_{0}^{1}\left(\gamma-\sum_{n}\int_{0}^{1}ndy_{s}(r,n)\right) dy_{s}(r,k)\right)\right]ds\right|>\epsilon\right)\nonumber\\
&\leq & \lim_{N\rightarrow \infty}\mathbb{P}\left(\sup_{t\leq T}T\left|\sum_{r}\frac{1}{rNR_{\max}}Y_{t}^{N}(r,k+1)-\int_{0}^{1}\frac{\Lambda}{r}dY_{t}(r,k+1) \right|>\epsilon/3\right)\nonumber \\
&+&\lim_{N\rightarrow \infty}\mathbb{P}\left(\sup_{t\leq T}T\left|\left(\frac{M}{N}-\sum_{n',r'}n'Y_{t}^{N}(r',n')\right)Y_{t}^{N}(r,k)- \int_{0}^{1}\left(\gamma-\sum_{n}\int_{0}^{1}ndY_{t}(r,n)\right) dY_{t}(r,k)\right|>\epsilon/3\right)\nonumber\\
&+&\lim_{N\rightarrow \infty}\mathbb{P}\left(\sup_{t\leq T}2LT\left|Y_{t}^{N}- y_{t}\right|>\epsilon/3 \right)\nonumber \\
&\leq &  \lim_{N\rightarrow \infty}\mathbb{P}\left(\sup_{t\leq T}T\left|\frac{C}{\Lambda NR_{\max}}-\frac{\Lambda}{\Lambda /C}\right|\sum_{r}Y_{s}(r,k+1) >\epsilon/3\right)\nonumber \\
&+&\lim_{N\rightarrow \infty}\mathbb{P}\left(\sup_{t\leq T}T\left|\frac{M}{N}-\gamma\right| \sum_{r}Y_{s}(r,k)>\epsilon/3\right)\nonumber \\
&\leq &  \lim_{N\rightarrow \infty}\mathbb{P}\left(\sup_{t\leq T}\frac{CT}{\Lambda}\left|\frac{1}{ NR_{\max}}-\Lambda \right| >\epsilon/3\right)+\lim_{N\rightarrow \infty}\mathbb{P}\left(\sup_{t\leq T}T\left|\frac{M}{N}-\gamma\right| >\epsilon/3\right)\nonumber \\
&=&0,
\end{eqnarray}
which implies
\begin{equation}
\sup_{t\leq T}\left|\boldlangle \sqrt{N}M^N(k),\sqrt{N}M^N(k+1)\boldrangle_{t}- \boldlangle M(k), M(k+1)\boldrangle_{t}\right|\xrightarrow{p} 0.
\end{equation}
Since for all $|i-j|>1$, 
$$\boldlangle \sqrt{N}M^N(i),\sqrt{N}M^N(j)\boldrangle_{t} =\boldlangle M(i), M(j)\boldrangle_{t}=0.$$
We can conclude that 
\begin{equation}
\sup_{t\leq T}\left|\boldlangle \sqrt{N}M^N(i),\sqrt{N}M^N(j)\boldrangle_{t}- \boldlangle M(i), M(j)\boldrangle_{t}\right|\xrightarrow{p} 0.
\end{equation}
for all $0\leq i,j\leq K$.

We also know that the jump size of $Y^{N}_{t}$ is $1/N$, therefore
\begin{equation}
\lim_{N\rightarrow \infty}\mathbb{E}\left[\sup_{0<t\leq T}\left|\sqrt{N}M^{N}_{t}-\sqrt{N}M^{N}_{t-}\right| \right]=\lim_{N\rightarrow \infty}\mathbb{E}\left[\sup_{0<t\leq T}\left|\sqrt{N}Y^{N}_{t}-\sqrt{N}Y^{N}_{t-}\right| \right]=0.
\end{equation}
By Theorem 1.4 in Chapter 7 of \citet{Ethier2009},  $\sqrt{N}M^{N}_{t}$ converges to the continuous Brownian motion $M_{t}$ in distribution in $\mathbb{D}(\mathbb{R}_{+},\mathbb{R}^{K+1})$. By Prohorov's theorem, $(\sqrt{N}M^{N})_{N=1}^{\infty}$ is tight. This automatically implies the tightness condition ii).

\textbf{For the second term $\int^{u}_{v} \sqrt{N} \left( \beta(Y^N_{z}) - b(Y^N_{z})  \right) dz$}, we have by Proposition \ref{driftbound} that the quantity $ \sqrt{N} \left( \beta(Y^N_{z}) - b(Y^N_{z})  \right) $ is bounded for any value of $z\in [0,T]$.  Therefore, there exists some constant $C_{1}$ that does not depend on $N$ such that
\begin{equation}
\sup_{z\in [0,T]}\sqrt{N} \left| \beta(Y^N_{z}) - b(Y^N_{z})  \right|\leq C_{1}.
\end{equation}
Then
\begin{eqnarray}
& &\lim_{\delta\rightarrow 0}\lim_{N\rightarrow \infty}\mathbb{P}\left(\sup_{u,v\in [0,T],|u-v|\leq \delta}\int^{u}_{v} \sqrt{N} \left| \beta(Y^N_{z}) - b(Y^N_{z})  \right| dz > \epsilon \right)\nonumber \\ 
&\leq &  \lim_{\delta\rightarrow 0}\lim_{N\rightarrow \infty}\mathbb{P}\left(\delta \sup_{z\in [0,T]}\sqrt{N} \left| \beta(Y^N_{z}) - b(Y^N_{z})  \right|  > \epsilon \right)\nonumber \\
&\leq & \lim_{\delta\rightarrow 0}\mathbb{P}\left(\delta C_{1}  > \epsilon \right)\nonumber \\
&=& 0.
\end{eqnarray}
 Thus, we have proved the oscillation bound for the second term.  
 
 Finally \textbf{for the third term} we have that 
\begin{eqnarray}
\int^{u}_{v} \sqrt{N} \left| b(Y^N_{z}) - b(y_{z})  \right| dz & \leq& \int^{u}_{v} \sqrt{N} L\left| Y^N_{z} - y_{z} \right| dz\nonumber \\
&=& \int^{u}_{v} L \cdot \left|D^N_{z}\right| dz \nonumber\\\
&\leq & L\delta \sup_{t\in [0,T]}|D^{N}_{t}|.
\end{eqnarray}
By Lemma~\ref{L2bound},  
\begin{eqnarray}
& &\lim_{\delta\rightarrow 0}\lim_{N\rightarrow \infty}\mathbb{P}\left(\sup_{u,v\in [0,T],|u-v|\leq \delta}\int^{u}_{v} \sqrt{N} \left| b(Y^N_{z}) - b(y_{z})  \right| dz>\epsilon\right)\nonumber\\
&\leq &\lim_{\delta\rightarrow 0}\lim_{N\rightarrow \infty}\mathbb{P}\left(L\delta\sup_{t\in [0,T]}|D^{N}_{t}|>\epsilon\right)\nonumber\\
&\leq & \lim_{\delta\rightarrow 0}\lim_{N\rightarrow \infty}\frac{\mathbb{E}\left(\sup_{t\in [0,T]}|D^{N}_{t}|^2\right)}{(\epsilon/L \delta)^2}\nonumber \\
&\leq &\lim_{\delta\rightarrow 0}\frac{C_{0}(L\delta)^2}{\epsilon^2}\nonumber \\
&=& 0,
\end{eqnarray}
 which implies that the oscillation bound holds for the third term.  
\end{proof}

\begin{proof}[Proof of Theorem \ref{dt_solution}]
To prove the existence and uniqueness of the SDE (\ref{sde}), we show the following two conditions hold:
There exists a constant $H>0$ such that
\begin{itemize}
\item[(1)]Lipschitz condition: for any $D,\tilde{D}\in \mathbb{R}^{K+1}$, any $t\in (t_{0},T)$, 
\begin{equation}
|b'(y_{t})D-b'(y_{t})\tilde{D}|\leq H|D-\tilde{D}|.
\end{equation}
\item[(2)]Linear growth condition: for any $D\in \mathbb{R}^{K+1}$, any $t\in (t_{0},T)$,
\begin{equation}
|b_{+}(y_{t})+b_{-}(y_{t})|\leq H(1+|D|), \quad |b'(y_{t})D|\leq H(1+|D|).
\end{equation}

\end{itemize}
By Proposition~\ref{Lipschitz}, $b(y)$ is Lipschitz, and by our assumption in Theorem~\ref{difftheorem}, $b(y)$ is also continuously differentiable, thus $b'(y)$ is bounded. Then conditions (1)  and (2) follow, which prove that there exist a unique solution to the SDE (\ref{sde}).

Take expectation on both sides of Equation (\ref{dt}), since $\mathbb{E}\left[ \int_{0}^{t}e^{\int_{s}^{t}b'(y_u)du}dM_s\right]=0$, we have $$\mathbb{E}[D_t]=e^{\int_{0}^{t}\mathcal{A}(s)ds} \mathbb{E}[D_0].$$
Therefore
\begin{eqnarray}
D_t-\mathbb{E}[D_t]&=&e^{\int_{0}^{t}\mathcal{A}(s)ds}(D_0-\mathbb{E}[D_0])+\int_{0}^{t}e^{\int_{s}^{t}\mathcal{A}(u)du}dM_s,
\end{eqnarray}
\begin{eqnarray}
\Sigma(t)&=&E[(D_t-\mathbb{E}[D_t])(D_t-\mathbb{E}[D_t])^\top]\nonumber  \\
&=&e^{\int_{0}^{t}\mathcal{A}(s)ds}E[(D_0-\mathbb{E}[D_0])(D_0-\mathbb{E}[D_0])^\top] \left(e^{\int_{0}^{t}\mathcal{A}(s)ds}\right)^\top \nonumber\\&+&
\left(\int_{0}^{t}e^{\int_{s}^{t}\mathcal{A}(u)du}dM_s\right)\left(\int_{0}^{t}e^{\int_{s}^{t}\mathcal{A}(u)du}dM_s\right)^\top\nonumber\\
&=&e^{\int_{0}^{t}\mathcal{A}(s)ds}\Sigma(0)e^{\int_{0}^{t}\mathcal{A^\top}(s)ds}+\int_{0}^{t}e^{\int_{s}^{t}\mathcal{A}(u)du}\mathcal{B}(s)e^{\int_{s}^{t}\mathcal{A^\top}(u)du}ds.
\end{eqnarray}
\end{proof}

\begin{customthm}{\ref{bbound_ex}}
For any $s\geq 0$,
\begin{equation}
\limsup_{N\rightarrow \infty}\sqrt{N}\left|\beta(Y_{s}^{N})-b(Y_{s}^{N})\right|<\infty.
\end{equation}
\end{customthm}
\begin{proof}[Proof of Proposition~\ref{bbound_ex}]
 \begin{equation}
 \begin{split}
 &|\beta(Y_s)-b(Y_s)|\\
 \leq &2\sum_{n}\left|\sum_{r,p,k}\frac{pP^{N}_{\max}}{rR^{N}_{\max}}Y_s(r,n,p,k)-\sum_{k\in \mathcal{K}}\iint_{[0,1]\times [0,1]}\frac{p\Lambda}{r}dY_s(r,n,p,k)\right|\nonumber\\
 &+2\sum_{n}\left|\sum_{r,p,k}pNP^{N}_{\max}\frac{M}{N}Y_s(r,n,p,k)-\sum_{k\in \mathcal{K}}\iint_{[0,1]\times [0,1]}p\mathcal{P}\gamma dY_s(r,n,p,k)\right|\\
 &+2\sum_{n}\left|\sum_{r,p,k}\left(\sum_{n'}\sum_{r',p',k'}n'Y_s(r',n',p',k')\right)Y_s(r,n,p,k)\right.\nonumber\\
  &-\left.\sum_{k\in \mathcal{K}}\iint_{[0,1]\times [0,1]}\left(\sum_{n'}\sum_{k\in \mathcal{K}}\iint_{[0,1]\times [0,1]}n'dY_s(r',n',p',k')\right)dY_s(r,n,p,k)\right|.
 \end{split}
 \end{equation} 
 By Equation (\ref{bound1_ex}) and (\ref{bound2_ex}), 
 \begin{eqnarray}
 |\beta(Y_s)-b(Y_s)|\leq 2(K_{\max}+1)\left(\left|\frac{P_{\max}}{R_{\max}}-\Lambda\right|\frac{C}{\Lambda}+\left|NP_{\max}\frac{M}{N}-\mathcal{P}\gamma\right|\right).
 \end{eqnarray}
 By the assumptions in Theorem~\ref{difftheorem_ex}, we have
 $$\limsup_{N\rightarrow \infty}\sqrt{N}\left(\frac{P_{\max}^{N}}{R_{\max}^{N}}-\Lambda\right)<\infty, \limsup_{N\rightarrow \infty}\sqrt{N}\left(\frac{M}{N}-\gamma\right)< \infty, \limsup_{N\rightarrow \infty}\sqrt{N}\left(NP_{\max}^{N}-\mathcal{P}\right)< \infty.$$
Thus 
\begin{eqnarray}
\limsup_{N\rightarrow \infty}\sqrt{N}\left|\beta(Y_{s}^{N})-b(Y_{s}^{N})\right|&\leq& \limsup_{N\rightarrow \infty}2(K_{\max}+1)\sqrt{N}\left(\left|\frac{P_{\max}}{R_{\max}}-\Lambda\right|\frac{C}{\Lambda}+\left|NP_{\max}\frac{M}{N}-\mathcal{P}\gamma\right|\right) \nonumber \\ &<& \infty. 
\end{eqnarray}
\end{proof}

\begin{customthm}{\ref{adjacent_ex}}
For $k<K_{\max}$,
\begin{equation}
\lim_{N\rightarrow \infty}\mathbb{P}\left(\sup_{t\leq T}\left|\boldlangle \sqrt{N}M^N(k),\sqrt{N}M^N(k+1)\boldrangle_{t}- \boldlangle M(k),M(k+1)\boldrangle_{t}\right|>\epsilon\right)=0.
\end{equation}
\begin{proof}[Proof of Proposition \ref{adjacent_ex}]
For the adjacent terms, we have
\begin{eqnarray}
& &\lim_{N\rightarrow \infty}\mathbb{P}\left(\sup_{t\leq T}\left|\boldlangle \sqrt{N}M^N(k),\sqrt{N}M^N(k+1)\boldrangle_{t}- \boldlangle M(k),M(k+1)\boldrangle_{t}\right|>\epsilon\right)\nonumber \\
&=&\lim_{N\rightarrow \infty}\mathbb{P}\left(\sup_{t\leq T}\left|\int_{0}^{t}\left[\sum_{r,p,K\in\mathcal{K}}\left(\frac{pP_{\max}}{rR_{\max}}Y_{s}^{N}(r,k+1,p,K)+\left(\frac{pNP_{\max}M}{N}-\sum_{n'}\sum_{r',p',K'}n'Y_{s}^{N}(r',n',p',K')\right)\right. \right.\right.\right.\nonumber\\
& & \left.\left. \left. \left. Y_{s}^{N}(r,k,p,K)\right)-\left(\sum_{K\in \mathcal{K}}\iint_{[0,1]\times [0,1]}\frac{p\Lambda}{r}dY_s(r,k+1,p,K)\right. \right.\right.\right.\nonumber\\
&+ & \left.\left. \left. \left. \sum_{K\in \mathcal{K}}\iint_{]0,1]\times [0,1]}p\mathcal{P}\left(\gamma-\sum_{n}\sum_{K\in \mathcal{K}}\iint_{]0,1]\times [0,1]}ndy_{s}(r,n,p,K)\right) dy_{s}(r,k,p,K)\right)\right]ds\right|>\epsilon\right)\nonumber\\
&\leq & \lim_{N\rightarrow \infty}\mathbb{P}\left(\sup_{t\leq T}T\left|\sum_{r,p,K}\frac{pP_{\max}}{rR_{\max}}Y_{t}^{N}(r,k+1,P,K)-\sum_{K\in \mathcal{K}}\iint_{[0,1]\times [0,1]}\frac{p\Lambda}{r}dY_s(r,k+1,p,K) \right|>\epsilon/3\right)\nonumber \\
&+&\lim_{N\rightarrow \infty}\mathbb{P}\left(\sup_{t\leq T}T\left|\sum_{r,p,K\in\mathcal{K}}\left(pNP_{\max}\frac{M}{N}-\sum_{n'}\sum_{r',p',K'}n'Y_{s}^{N}(r',n',p',K')\right)Y_{t}^{N}(r,k,p,K)\right.\right.\nonumber\\
& & \left.\left. - \sum_{K\in \mathcal{K}}\iint_{]0,1]\times [0,1]}p\mathcal{P}\left(\gamma-\sum_{n}\sum_{K\in \mathcal{K}}\iint_{]0,1]\times [0,1]}ndy_{s}(r,n,p,K)\right) dy_{s}(r,k,p,K)\right|>\epsilon/3\right)\nonumber\\
&+&\lim_{N\rightarrow \infty}\mathbb{P}\left(\sup_{t\leq T}2LT\left|Y_{t}^{N}- y_{t}\right|>\epsilon/3 \right)\nonumber \\
&\leq &  \lim_{N\rightarrow \infty}\mathbb{P}\left(\sup_{t\leq T}T\left|\frac{CP_{\max}}{\Lambda R_{\max}}-\frac{\Lambda}{\Lambda /C}\right|\sum_{r,p,K}Y_{s}(r,k+1,p,K) >\epsilon/3\right)\nonumber \\
&+&\lim_{N\rightarrow \infty}\mathbb{P}\left(\sup_{t\leq T}T\left| NP_{\max}\frac{M}{N}-\mathcal{P}\gamma\right| \sum_{r,p,K}pY_{s}(r,k,p,K)>\epsilon/3\right)\nonumber \\
&\leq &  \lim_{N\rightarrow \infty}\mathbb{P}\left(\sup_{t\leq T}\frac{CT}{\Lambda}\left|\frac{P_{\max}}{ R_{\max}}-\Lambda \right| >\epsilon/3\right)+\lim_{N\rightarrow \infty}\mathbb{P}\left(\sup_{t\leq T}T\left|NP_{\max}\frac{M}{N}-\mathcal{P}\gamma\right| >\epsilon/3\right)\nonumber \\
&=&0,
\end{eqnarray}
which implies
\begin{equation}
\sup_{t\leq T}\left|\boldlangle \sqrt{N}M^N(k),\sqrt{N}M^N(k+1)\boldrangle_{t}- \boldlangle M(k), M(k+1)\boldrangle_{t}\right|\xrightarrow{p} 0.
\end{equation}
\end{proof}
\end{customthm}

\bibliographystyle{plainnat}
\bibliography{bike_clt}
\end{document}